\newtheorem{theorem}{Theorem}[section]
\newtheorem{corollary}[theorem]{Corollary}
\newtheorem{lemma}[theorem]{Lemma}
\newtheorem{proposition}[theorem]{Proposition}
\newtheorem{definition}[theorem]{Definition}
\newtheorem{remark}[theorem]{Remark}
\newcommand{\RR}{\mathbb{R}}
\newcommand{\Binner}{B_{\text{inner}}}
\newcommand{\Bouter}{B_{\text{outer}}}
\newcommand{\prox}{\mathrm{prox}}
\begin{document}
	\title{The Landscape of the Proximal Point Method for Nonconvex-Nonconcave Minimax Optimization}
	\author{Benjamin Grimmer\footnote{bdg79@cornell.edu; Google Research, New York NY and Cornell University, Ithaca NY}, Haihao Lu\footnote{Haihao.Lu@chicagobooth.edu; Google Research, New York NY and University of Chicago, Chicago IL}, Pratik Worah\footnote{pworah@google.com; Google Research, New York NY}, Vahab Mirrokni\footnote{mirrokni@google.com;Google Research, New York NY}}
	\date{}
	\maketitle
	
	\begin{abstract}
		Minimax optimization has become a central tool in machine learning with applications in robust optimization, reinforcement learning, GANs, etc. These applications are often nonconvex-nonconcave, but the existing theory is unable to identify and deal with the fundamental difficulties this poses. In this paper, we study the classic proximal point method (PPM) applied to nonconvex-nonconcave minimax problems. We find that a classic generalization of the Moreau envelope by Attouch and Wets provides key insights. Critcally, we show this envelope not only smooths the objective but can convexify and concavify it based on the level of interaction present between the minimizing and maximizing variables. From this, we identify three distinct regions of nonconvex-nonconcave problems. When interaction is sufficiently strong, we derive global linear convergence guarantees. Conversely when the interaction is fairly weak, we derive local linear convergence guarantees with a proper initialization. Between these two settings, we show that PPM may diverge or converge to a limit cycle.
	\end{abstract}
	
\section{Introduction}
Minimax optimization has become a central tool for modern machine learning, recently receiving increasing attention in optimization and machine learning communities. We consider the following saddle point optimization problem
\begin{equation}\label{eq:main-problem}
\min_{x\in\RR^n}\max_{y\in\RR^m} L(x,y) \ ,
\end{equation}
where $L(x,y)$ is a differentiable function in $x$ and $y$. 
Many important problems in modern machine learning take this form but often have an objective $L(x,y)$ that is neither convex in $x$ nor concave in $y$. For example, 

\begin{itemize}
	\item \textbf{(GANs).} Generative adversarial networks (GANs)~\cite{Goodfellow1412} learn the distribution of observed samples through a two-player zero-sum game. While the generative network (parameterized by $G$) generates new samples, the discriminative network (parameterized by $D$) attempts to distinguish these from the true data. This gives rise to the minimax formulation
	\begin{align*} 
	\min_G \max_D\ &\mathbb{E}_{s\sim p_{data}} \left[\log D(s)\right]  + \mathbb{E}_{e\sim p_{latent}} \left[\log(1-D(G(e)))\right] \ ,
	\end{align*} 
	where $p_{data}$ is the data distribution, and $p_{latent}$ is the latent distribution.
	\item \textbf{(Robust Training).} Minimax optimization has a long history in robust optimization. Recently, it has found usage with neural networks, which have shown great success in machine learning tasks but are vulnerable to adversarial attack. Robust training~\cite{madry2017towards} aims to overcome such issues by solving a minimax problem with adversarial corruptions $y$
	\begin{align*} 
	\min_x \ & \mathbb{E}_{(u, v)} \left[ \max_{y\in S} \ell(u+y, v, x) \right] \ .
	\end{align*} 
	\item \textbf{(Reinforcement Learning).} In reinforcement learning, the solution to Bellman equations can be obtained by solving a primal-dual minimax formulation. Here a dual critic seeks a solution to the Bellman equation and a primal actor seeks state-action pairs to break this satisfaction~\cite{sutton2018reinforcement,Dai1801}.
\end{itemize}

The Proximal Point Method (PPM) may be the most classic first-order method for solving minimax problems. It was first studied in the seminal work by Rockafellar in \cite{rockafellar1976monotone}, and many practical algorithms for minimax optimization developed later on turn out to be approximations of PPM, such as Extragradient Method (EGM)~\cite{tseng1995linear,nemirovski2004prox} and Optimistic Gradient Descent Ascent \cite{DaskalakisNIPS2018}. The update rule of PPM with step-size $\eta$ is given by the proximal operator:
\begin{equation}\label{eq:prox-definition}
(x_{k+1},y_{k+1})=\prox_\eta(x_k,y_k):= \mathrm{arg}\min_{u\in\mathbb{R}^n}\max_{v\in\mathbb{R}^m} L(u,v) +  \frac{\eta}{2}\|u-x_k\|^2 - \frac{\eta}{2}\|v-y_k\|^2.
\end{equation}
For convex-concave minimax problems, PPM is guaranteed to converge to an optimal solution. However, the dynamics of PPM for nonconvex-nonconcave minimax problems are much more complicated. For example, consider the special case of minimax optimization problems with bilinear interaction
\begin{equation}\label{eq:bilinear}
\min_x\max_y  f(x) +x^TAy - g(y).
\end{equation}
Figure \ref{fig:sample-path} presents the sample paths of PPM from different initial solutions solving a simple instance of~\eqref{eq:bilinear} with different interaction terms $A$. This example may be the simplest non-trivial nonconvex-nonconcave minimax problem. It turns out the behaviors of PPM heavily relies on the scale of the interaction term $A$: when the interaction term is small, PPM converges to local stationary solutions, as the interaction term increases, PPM may fall into a limit cycle indefinitely, and eventually when the interaction term is large enough, PPM converges globally to a stationary solution. Similar behaviors also happen in other classic algorithms for nonconvex-nonconcave minimax problems, in particular, EGM, which is known as one of the most effective algorithms for minimax problems. See Figure~\ref{fig:path-more} in Appendix \ref{app:sample-paths} for their trajectories for solving this simple two-dimension example (the study of these other algorithms is beyond the scope of this paper).
In practice, it is also well-known that classic first-order methods may fail to converge to a stable solution for minimax problems, such as GANs~\cite{farnia2020gans}.

\begin{figure}
	\begin{subfigure}[b]{0.32\textwidth}
		\includegraphics[width=\textwidth]{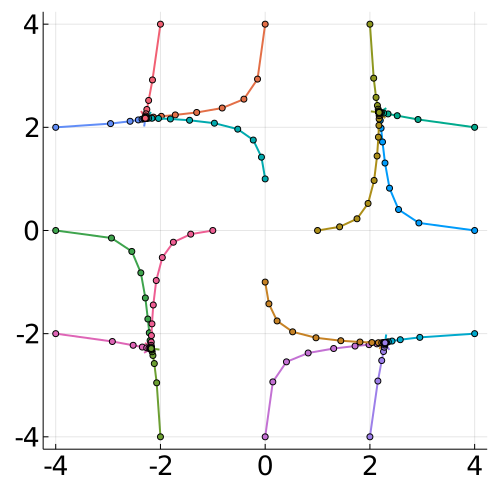}
		\caption{$A=1$}
	\end{subfigure}
	\begin{subfigure}[b]{0.32\textwidth}
		\includegraphics[width=\textwidth]{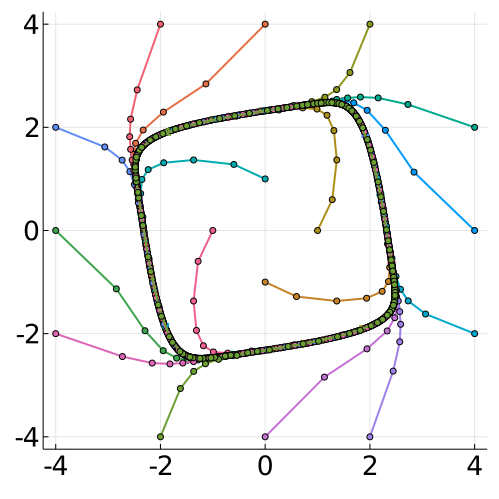}
		\caption{$A=10$}
	\end{subfigure}
	\begin{subfigure}[b]{0.32\textwidth}
		\includegraphics[width=\textwidth]{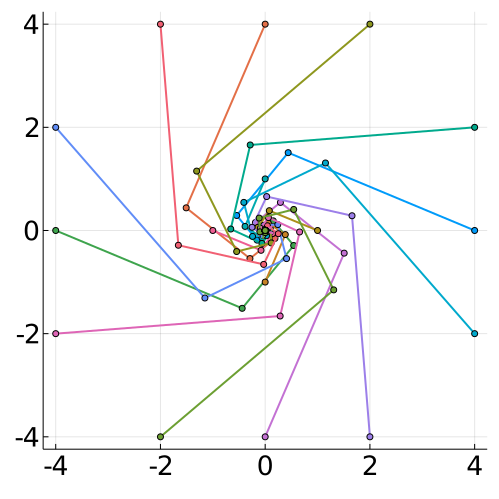}
		\caption{$A=100$}
	\end{subfigure}
	\caption{Sample paths of PPM from different initial solutions applied to \eqref{eq:bilinear} with $f(x)=(x+3)(x+1)(x-1)(x-3)$ and $g(y)=(y+3)(y+1)(y-1)(y-3)$ and different scalars $A$. As $A\ge 0$ increases, the solution path transitions from having four locally attractive stationary points, to a globally attractive cycle, and finally to a globally attractive stationary point.}\label{fig:sample-path}
\end{figure}

The goal of this paper is to understand these varied behaviors of PPM when solving nonconvex-nonconcave minimax problems. We identify that the {\it saddle envelope}, originating from Attouch and Wets~\cite{Attouch1983}, provides key insights
\begin{equation}\label{eq:saddle-envelope}
L_{\eta}(x,y) := \min_{u\in\mathbb{R}^n}\max_{v\in\mathbb{R}^m} L(u,v) +  \frac{\eta}{2}\|u-x\|^2 - \frac{\eta}{2}\|v-y\|^2.
\end{equation}
This generalizes the Moreau envelope but differs in key ways. Most outstandingly, we show that the saddle envelope not only smooths the objective but also can convexify and concavify nonconvex-nonconcave problems when $\nabla^2_{xy} L$ is sufficiently large (which can be interpreted as having a high level of the interaction between $x$ and $y$).
Understanding this envelope in our nonconvex-nonconcave setting turns out to be the cornerstone of explaining the above varied behaviors of PPM.  Utilizing this machinery, we find that the three regions shown in the simple example (Figure \ref{fig:sample-path}) happen with generality for solving \eqref{eq:main-problem}:
\begin{enumerate}
	\item When the interaction between $x$ and $y$ is dominant, PPM has global linear convergence to a stationary point of $L(x,y)$ (Figure \ref{fig:sample-path} (c)). This argument utilizes the fact that, in this case, the closely related saddle envelope becomes convex-concave, even though the $L(x,y)$ is nonconvex-nonconcave.
	\item When the interaction between $x$ and $y$ is weak, properly initializing PPM yields local linear convergence to a nearby stationary point of $L(x,y)$ (Figure \ref{fig:sample-path} (a)). The intuition is that due to the low interaction we do not lose much by ignoring the interaction and decomposing the minimax problem into nonconvex minimization and nonconcave maximization.
	\item Between these interaction dominant and weak regimes, PPM may fail to converge at all and fall into cycling (Figure \ref{fig:sample-path} (b)) or divergence (see the example in Section \ref{subsec:tight}). We construct diverging examples showing our interaction dominant boundary is tight and present a ``Lyapunov''-type function that characterizes how quickly PPM can diverge.
\end{enumerate}
Furthermore, we believe the calculus for the saddle envelope of nonconvex-nonconcave functions that we develop in Section~\ref{sec:saddle-envelope} will be broadly impactful outside its use herein analyzing the proximal point method. As a byproduct of our analysis of the saddle envelope, we clearly see that the interaction term helps the convergence of PPM for minimax problems. This may not be the case for other algorithms, such as gradient descent ascent (GDA) and alternating gradient descent ascent (AGDA) (see Appendix \ref{app:sample-paths} for examples and~\cite{grimmer2020limiting} for theoretical analysis). 

We comment on the meaning of stationary points $\nabla L(z)=0$ for nonconvex-nonconcave problems.
By viewing the problem~\eqref{eq:main-problem} as a simultaneous zero-sum game between a player selecting $x$ and a player selecting $y$, a stationary point can be thought of as a first-order Nash Equilibrium. That is, neither player tends to deviate from their position based on their first-order information.
Beyond the scope of this work, one could instead view~\eqref{eq:main-problem} as a sequential zero-sum game. 
Then a different asymmetric measure of optimality may be called for~\cite{DaskalakisNIPS2018,Jordan1902,farnia2020gans}.

In the rest of this section, we discuss the assumptions, related literature, and preliminaries that will be used later on. In Section \ref{sec:saddle-envelope}, we develop our expanded theory for the saddle envelope. In particular, we introduce the interaction dominance condition (Definition \ref{def:interaction-dominance}) that naturally comes out as a condition for convexity-concavity of the saddle envelope. In Section \ref{sec:interaction-dominate}, we present the global linear convergence of PPM for solving interaction dominant minimax problems. In Section \ref{sec:interaction-weak}, we show that in an interaction weak setting, PPM converges to local stationary points. In Section \ref{sec:interaction-moderate}, we show that PPM may diverge when our interaction dominance condition is slightly violated, establishing the tightness of our global convergence theory. Further, we propose a natural ``Lyapunov''-type function applying to generic minimax problems and providing a bound on how quickly PPM can diverge in the difficult interaction moderate setting.

\subsection{Assumptions and Algorithms}
We say a  function $M(x,y)$ is $\beta$-smooth if its gradient is uniformly $\beta$-Lipschitz
$ \|\nabla M(z) - \nabla M(z')\| \leq \beta\|z-z'\| $
or equivalently for twice differentiable functions, if $\|\nabla^2 M(z)\| \leq \beta$.
Further, we say a twice differentiable $M(x,y)$ is $\mu$-strongly convex-strongly concave for some $\mu\geq 0$ if
$ \nabla^2_{xx} M(z) \succeq \mu I $ and $-\nabla^2_{yy}M(z) \succeq \mu I.$
When $\mu=0$, this corresponds to $M$ being convex with respect to $x$ and concave with respect to $y$.

Throughout this paper, we are primarily interested in the weakening of this convexity condition to allow negative curvature given by $\rho$-weak convexity and $\rho$-weak concavity: we assume that $L$ is twice differentiable, and for any $z=(x,y)\in\RR^n\times\RR^m$ that
\begin{equation} \label{eq:weak}
\nabla^2_{xx} L(z)\succeq -\rho I\ ,\ \ \  -\nabla^2_{yy} L(z)\succeq -\rho I \ .
\end{equation}
Notice that the objective $L(x,y)$ is convex-concave when $\rho=0$, and strongly convex-strongly concave when $\rho<0$. Here our primary interest is in the regime where $\rho>0$ is positive, quantifying how nonconvex-nonconcave $L$ is.

Besides PPM, Gradient Descent Ascent (GDA) is another classic algorithm for minimax problems~\eqref{eq:main-problem} given by
\begin{equation}\label{eq:GDA}
\begin{bmatrix} x_{k+1} \\ y_{k+1} \end{bmatrix} = \begin{bmatrix} x_{k} \\ y_{k} \end{bmatrix} - s\begin{bmatrix} \nabla_x L(x_k,y_k) \\ -\nabla_y L(x_k,y_k) \end{bmatrix} \ ,
\end{equation}
with stepsize parameter $s>0$. However, GDA is known to work only for strongly convex-strongly concave minimax problems, and it may diverge even for simple convex-concave problems~\cite{DaskalakisNIPS2018,lu2020s}.

In this paper, we study a more generalized algorithm, damped PPM, with damping parameter $\lambda\in(0,1]$ and proximal parameter $\eta>0$ given by
\begin{equation}\label{eq:saddle-PPM}
\begin{bmatrix} x_{k+1} \\ y_{k+1} \end{bmatrix} = (1-\lambda)\begin{bmatrix} x_{k} \\ y_{k} \end{bmatrix} +\lambda\ \prox_\eta(x_k,y_k) \ .
\end{equation}
In particular, when $\lambda=1$, we recover the traditional PPM \eqref{eq:prox-definition}.
Interestingly, we find through our theory that some nonconvex-nonconcave problems only have PPM converge when damping is employed (i.e., $\lambda <1$).

\subsection{Related Literature.}

There is a long history of research into convex-concave minimax optimization. Rockafellar \cite{rockafellar1976monotone} studies PPM for solving monotone variational inequalities, and shows that, as a special case, PPM  converges to the stationary point linearly when $L(x,y)$ is strongly convex-strongly concave or when $L(x,y)$ is bilinear. Later on, Tseng~\cite{tseng1995linear} shows that EGM converges linearly to a stationary point under similar conditions. Nemirovski~\cite{nemirovski2004prox} shows that EGM approximates PPM and presents the sublinear rate of EGM. Recently, minimax problems have gained the attention of the machine learning community, perhaps due to the thriving of research on GANs. Daskalakis and Panageas~\cite{DaskalakisNIPS2018} present an Optimistic Gradient Descent Ascent algorithm (OGDA) and shows that it converges linearly to the saddle-point when $L(x,y)$ is bilinear. Mokhtari et al.~\cite{mokhtari2019unified} show that OGDA is a different approximation to PPM. Lu~\cite{lu2020s} presents an ODE approach, which leads to unified conditions under which each algorithm converges, including a class of nonconvex-nonconcave problems.

There are also extensive studies on convex-concave minimax problems when the interaction is bilinear (similar to our example \eqref{eq:bilinear}). Some influential algorithms include Nesterov's smoothing~\cite{nesterov2005smooth}, Douglas-Rachford splitting (a special case is Alternating Direction Method of Multipliers (ADMM))~\cite{douglas1956numerical,eckstein1992douglas} and Primal-Dual Hybrid Gradient Method (PDHG)~\cite{chambolle2011first}.

Recently, a number of works have been undertaken considering nonconvex-concave minimax problems. The basic technique is to turn the minimax problem \eqref{eq:main-problem} to a minimization problem on $\Phi(x) = \max_y L(x,y)$, which is well-defined since $L(x,y)$ is concave in $y$, and then utilize the recent developments in nonconvex optimization~\cite{Jordan1906,Jordan2002,Rafique1810,Thekumparampil1912}.

Unfortunately, the above technique cannot be extended to the nonconvex-nonconcave setting, because $\Phi(x)$ is now longer tractable to compute (even approximately) as it is a nonconcave maximization problem itself. Indeed, the current understanding of nonconvex-nonconcave minimax problems is fairly limited. The recent research on nonconvex-nonconcave minimax problems mostly relies on some form of convex-concave-like assumptions, based on Stampacchia's or Minty's Variational Inequality~\cite{Zhou2017,Lin1809,Diakonikolas2021} and Polyak-Lojasiewicz conditions~\cite{Nouiehed1902,Yang2002}, which are strong in general and successfully bypass the inherent difficulty in the nonconvex-nonconcave setting. Such theory, unfortunately, presupposes the existence of a globally attractive solution and thus cannot describe behaviors like local solutions and cycling.

In an early version of this work~\cite{grimmer2020landscape}, we presented preliminary results for analyzing nonconvex-nonconcave bilinear problem \eqref{eq:bilinear}. Simultaneous to (or after) the early version, \cite{letcher2020impossibility} presents examples of nonconvex-nonconcave minimax problems where a reasonably large class of algorithms do not converge; \cite{Hsieh2006} presents an ODE analysis for the limiting behaviors of different algorithms with step-size shrinking to zero and shows the possibility to converge to an attractive circle; \cite{zhang2020optimality} utilizes tools from discrete-time dynamic systems to study the behaviors of algorithms around a local stationary solution, which involves the complex eigenvalues of a related Jacobian matrix;
\cite{grimmer2020limiting} studies the phase transitions between limit cycles and limit points of higher-order resolution ODEs of different nonconvex-nonconcave algorithms. 
Compared to these works, we identify machinery in the saddle envelope that facilitates direct analysis.

Epi/hypo-convergence of saddle functions and in particular the saddle envelope is developed by Attouch and Wets~\cite{Attouch1983,Attouch1983-second}. These notions of convergence facilitate asymptotic studies of penalty methods~\cite{Flam1986} and approximate saddle points~\cite{Guillerme1989}. Rockafellar~\cite{Rockafellar1985,Rockafellar1990} further builds generalized second derivatives for saddle functions, which may provide an avenue to relax our assumptions of twice differentiability here.
Assuming the given function $L$ is convex-concave, continuity/differentiability properties and relationships between saddle points are developed in~\cite{Attouch1986,Aze1988}, which facilitate asymptotic convergence analysis of proximal point methods like~\cite{Mouallif1989}. In Section~\ref{sec:saddle-envelope}, we build on these results, giving a calculus for the saddle envelope of nonconvex-nonconcave functions.

The idea to utilize a generalization of the Moreau envelope for nonconvex-nonconcave minimax problems is well motivated by the nonconvex optimization literature. In recent years, the Moreau envelope has found great success as an analysis tool in nonsmooth nonconvex optimization~\cite{Davis1901,Davis1803,Zhang1806} and in nonconvex-concave optimization~\cite{Rafique1810}.

\subsection{Preliminaries} \label{sec:prelim}
Strongly convex-strongly concave optimization problems $\min_x\max_y M(x,y)$ are well understood. The following lemma and subsequent theorem show that GDA contracts towards a stationary point when strong convexity-strong concavity and smoothness hold locally. Proofs are given in Appendix~\ref{app:proofs} for completeness. 
\begin{lemma} \label{lem:helper0}
	Suppose $M(x,y)$ is $\mu$-strongly convex-strongly concave on a convex set $S=S_x\times S_y$, then it holds for any $(x,y),(x',y') \in S$ that
	$$\mu\left\|\begin{bmatrix} x-x' \\ y-y' \end{bmatrix}\right\|^2 \leq \left(\begin{bmatrix} \nabla_x M(x,y)  \\-\nabla_y M(x,y) \end{bmatrix} -\begin{bmatrix} \nabla_x M(x',y')  \\-\nabla_y M(x',y') \end{bmatrix}\right)^T\begin{bmatrix} x-x' \\ y-y' \end{bmatrix}.$$
	When $\nabla M(x', y')=0$, the distance to this stationary point is bounded by
	$$
	\left\|\begin{bmatrix} x-x' \\ y-y' \end{bmatrix}\right\| \leq \frac{\|\nabla M(x,y)\|}{\mu} \ . 
	$$
\end{lemma}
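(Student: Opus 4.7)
The plan is to prove the first inequality by establishing that the saddle-gradient operator $F(z) := (\nabla_x M(x,y), -\nabla_y M(x,y))$ is $\mu$-strongly monotone on $S$, and then to deduce the second inequality as an immediate consequence via Cauchy--Schwarz after plugging in $F(z') = 0$. The structural fact that makes $\mu$-strong convexity in $x$ together with $\mu$-strong concavity in $y$ imply $\mu$-strong monotonicity of $F$ is that the anti-symmetric off-diagonal blocks of its Jacobian contribute nothing to the relevant symmetric quadratic form.

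Concretely, I would parametrize the segment from $z'=(x',y')$ to $z=(x,y)$ by $z_t := z' + t(z-z')$ for $t \in [0,1]$, which lies in $S$ by convexity, and apply the fundamental theorem of calculus componentwise to obtain
\[ F(z) - F(z') = \int_0^1 J_F(z_t)(z-z')\,dt, \]
where $J_F$ has diagonal blocks $\nabla^2_{xx} M$, $-\nabla^2_{yy} M$ and off-diagonal blocks $\nabla^2_{xy} M$, $-\nabla^2_{yx} M$. Taking the inner product of both sides with $z-z'$ and expanding, symmetry of the mixed partials, $\nabla^2_{xy} M = (\nabla^2_{yx} M)^T$, makes the two cross terms cancel exactly, leaving
\[ (z-z')^T J_F(z_t)(z-z') = (x-x')^T \nabla^2_{xx} M(z_t)(x-x') + (y-y')^T(-\nabla^2_{yy} M(z_t))(y-y'). \]
The hypothesized $\mu$-strong convexity-strong concavity on $S$ bounds each of the two terms below by $\mu$ times the squared norm of the corresponding block, so the integrand is at least $\mu\|z-z'\|^2$. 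Integrating over $t\in[0,1]$ yields the first inequality.

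For the second inequality, set $F(z')=0$ in the strong-monotonicity estimate just derived and apply Cauchy--Schwarz on the right-hand side:
\[ \mu\,\|z-z'\|^2 \leq F(z)^T(z-z') \leq \|F(z)\|\,\|z-z'\|. \]
Dividing by $\mu\|z-z'\|$ (the case $z=z'$ being trivial) gives the bound, using $\|F(z)\|=\|\nabla M(x,y)\|$ because flipping the sign of the $y$-block leaves the Euclidean norm unchanged.

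The only subtle step is the cross-term cancellation via symmetry of mixed partials, which is what makes the interaction block $\nabla^2_{xy} M$ irrelevant for strong monotonicity and thus lets strong convexity-concavity carry through cleanly; everything else is a routine mean-value-theorem plus Cauchy--Schwarz argument. An alternative route that avoids differentiating $F$ would apply strong convexity in $x$ to $M(\cdot,y)$ and strong concavity in $y$ to $M(x',\cdot)$ and sum, but extra cross-difference terms appear that still require a symmetry argument to dispose of, so the Jacobian approach is the cleanest presentation.
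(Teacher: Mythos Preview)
Your proof is correct but takes a different route from the paper. The paper argues at the function-value level: it chains the first-order strong convexity inequality for $M(\cdot,y')$ (from $x'$ to $x$) with the strong concavity inequality for $M(x,\cdot)$ (from $y$ to $y'$) to get an upper bound on $M(x',y')$, then runs the symmetric chain through $(x',y)$ to get a lower bound; subtracting the two yields the monotonicity inequality directly, with no cross terms ever appearing. Your Jacobian/mean-value argument is equally valid and arguably more natural given that the paper \emph{defines} $\mu$-strong convexity-strong concavity via the Hessian bounds $\nabla^2_{xx}M\succeq\mu I$, $-\nabla^2_{yy}M\succeq\mu I$; it also makes explicit why the interaction block $\nabla^2_{xy}M$ is irrelevant to monotonicity, which resonates with the paper's theme. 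The paper's approach, on the other hand, works purely from first-order conditions and would survive without twice differentiability. One small correction to your final remark: the function-value route does \emph{not} leave residual cross terms requiring a symmetry argument---the chaining through the mixed points $(x,y')$ and $(x',y)$ sidesteps them entirely.
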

\begin{theorem} \label{thm:standard-contraction}
	Consider any minimax problem $\min_{x\in\RR^n}\max_{y\in\RR^m} M(x,y)$ where $M(x,y)$ is $\beta$-smooth and $\mu$-strongly convex-strongly concave on a set $B(x_0,r)\times B(y_0,r)$ with $r\geq 2\|\nabla M(x_0,y_0)\|/\mu$. Then GDA~\eqref{eq:GDA} with step-size $s\in(0, 2\mu/\beta^2) $ linearly converges to a stationary point $(x^*,y^*)\in B((x_0,y_0),r/2)$ with
	$$ \left\|\begin{bmatrix} x_k-x^* \\ y_k-y^* \end{bmatrix}\right\|^2 \leq \left(1 - 2\mu s+\beta^2s^2\right)^{k}\left\|\begin{bmatrix} x_0-x^* \\ y_0-y^* \end{bmatrix}\right\|^2.$$
\end{theorem}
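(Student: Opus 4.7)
The plan is to combine Lemma~\ref{lem:helper0} with the textbook one-step Euclidean contraction argument for gradient methods, with the only subtlety being that strong convex-strong concavity and smoothness are assumed only locally on $B(x_0,r)\times B(y_0,r)$. The entire argument therefore reduces to (i) locating the stationary point inside an inner half-radius ball and (ii) showing the GDA iterates never leave the region in which the local hypotheses continue to apply.

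First I would establish existence of a stationary point $z^*=(x^*,y^*)\in B((x_0,y_0),r/2)$. The saddle operator $F(z)=(\nabla_x M(z),-\nabla_y M(z))$ is $\mu$-strongly monotone and $\beta$-Lipschitz on the compact convex product $B(x_0,r)\times B(y_0,r)$, so by the standard existence theorem for saddle points of strongly convex-concave functions on compact convex sets it admits a unique saddle $z^*$ there, and uniqueness of zeros of a strongly monotone operator is immediate. Applying Lemma~\ref{lem:helper0} to the pair $(z_0,z^*)$ with $\nabla M(z^*)=0$ yields
\[
\|z_0 - z^*\| \le \frac{\|\nabla M(z_0)\|}{\mu} \le \frac{r}{2},
\]
which places $z^*$ strictly interior to the product ball and confirms that $z^*$ is a genuine zero of $F$ rather than a boundary saddle.

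Next I would run the one-step contraction. Writing the GDA update as $z_{k+1}=z_k-sF(z_k)$ and using $F(z^*)=0$, expanding the square gives
\[
\|z_{k+1}-z^*\|^2 = \|z_k-z^*\|^2 - 2s\langle F(z_k)-F(z^*),\, z_k-z^*\rangle + s^2\|F(z_k)-F(z^*)\|^2.
\]
The inner-product term is at least $\mu\|z_k-z^*\|^2$ by Lemma~\ref{lem:helper0} and the trailing squared-norm term is at most $\beta^2\|z_k-z^*\|^2$ by $\beta$-smoothness, delivering the per-step factor $1-2\mu s+\beta^2s^2$, which lies in $[0,1)$ exactly when $s\in(0,2\mu/\beta^2)$.

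The main obstacle is to keep each estimate legal, i.e.\ to ensure both $z_k$ and $z^*$ sit in $B(x_0,r)\times B(y_0,r)$ at every step. I would discharge this with a short induction: assuming $z_k\in B(z_0,r)$, where the Euclidean product-norm ball is contained in $B(x_0,r)\times B(y_0,r)$, the one-step inequality above gives $\|z_{k+1}-z^*\|\le\|z_k-z^*\|\le r/2$, and then the triangle inequality $\|z_{k+1}-z_0\|\le\|z_{k+1}-z^*\|+\|z^*-z_0\|\le r$ closes the induction. Iterating the contraction recursion then produces the stated geometric bound. The hypothesis $r\ge 2\|\nabla M(z_0)\|/\mu$ is tuned precisely so that $z^*$ occupies the inner half-radius ball, leaving a cushion of $r/2$ in which the iterates may oscillate as they contract.
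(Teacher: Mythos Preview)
Your contraction step and the induction keeping the iterates inside $B(x_0,r)\times B(y_0,r)$ are correct and match (indeed, are slightly more careful than) the paper's argument. The gap is in your existence step: you invoke the second conclusion of Lemma~\ref{lem:helper0}, which requires $\nabla M(z^*)=0$, \emph{before} you have shown that the constrained saddle $z^*$ is interior. Existence of a saddle of a convex--concave function on a compact convex product only gives the variational inequality $\langle F(z^*),\,z-z^*\rangle\ge 0$ for all $z$ in the product ball; it does not give $F(z^*)=0$ unless $z^*$ is interior. So the argument ``$\nabla M(z^*)=0\Rightarrow\|z_0-z^*\|\le r/2\Rightarrow z^*$ is interior $\Rightarrow\nabla M(z^*)=0$'' is circular as written.

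The paper closes this loop (its Lemma~\ref{lem:helper1}) by using the \emph{first} inequality of Lemma~\ref{lem:helper0} together with the KKT sign condition at the constrained saddle: from $\langle F(z^*),\,z_0-z^*\rangle\ge 0$ and $\mu\|z_0-z^*\|^2\le\langle F(z_0)-F(z^*),\,z_0-z^*\rangle$ one obtains $\mu\|z_0-z^*\|^2\le\langle F(z_0),\,z_0-z^*\rangle\le\|\nabla M(z_0)\|\,\|z_0-z^*\|$, hence $\|z_0-z^*\|\le\|\nabla M(z_0)\|/\mu\le r/2$, and only then concludes $\nabla M(z^*)=0$. Replacing your appeal to the second part of Lemma~\ref{lem:helper0} with this computation fixes the proof; everything after that is fine.
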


We denote the Moreau envelope of a function $f$ with parameter $\eta>0$ by
\begin{equation}
e_{\eta}\left\{f\right\}(x) = \min_{u}\ f(u) + \frac{\eta}{2}\|u - x\|^2 \ . \label{eq:moreau-def}
\end{equation}
The Moreau envelope of a function provides a lower bound on it everywhere as
\begin{equation}
e_\eta\{f\}(x) \leq f(x) \ . \label{eq:moreau-lower-bound} 
\end{equation}
For $\rho$-weakly convex functions and $\eta>\rho$, there is a nice calculus for the Moreau envelope.
Its gradient at some $x\in\RR^n$ is determined by the proximal step $x_+=\mathrm{argmin}_u\  f(u) + \frac{\eta}{2}\|u - x\|^2$ having
\begin{equation}
\nabla e_{\eta}\left\{f\right\}(x) = \eta(x-x_+) = \nabla f(x_+) \ . \label{eq:moreau-grad}
\end{equation}
For twice differentiable $f$, the Moreau envelope has Hessian
\begin{equation}
\nabla^2 e_\eta\{f\}(x) = \eta I - (\eta I + \nabla^2 f(x_+))^{-1} \ . \label{eq:moreau-hess}
\end{equation}
This formula bounds the smoothness and convexity of the envelope by
\begin{equation}
(\eta^{-1} - \rho^{-1})^{-1}I \preceq \nabla^2 e_\eta\{f\}(x) \preceq \eta I \ . \label{eq:moreau-hessian-bounds}
\end{equation}
These bounds ensure the Moreau envelope has a $\max\{\eta,|\eta^{-1}-\rho^{-1}|^{-1}\}$-Lipschitz gradient, which simplifies for convex $f$ (that is, $\rho\leq 0$) to have an $\eta$-Lipschitz gradient.
Noting that $(\eta^{-1} - \rho^{-1})^{-1}$ always has the same sign as $-\rho$, we see that the Moreau envelope is (strongly/weakly) convex exactly when the given function $f$ is (strongly/weakly) convex.

\section{The Saddle Envelope} \label{sec:saddle-envelope}
In this section, we consider the saddle envelope first developed by Attouch and Wets~\cite{Attouch1983} and characterize its structure for nonconvex-nonconcave optimization. Recall for any proximal parameter $\eta>0$, the saddle envelope (also referred to as an upper Yosida approximate and a mixed Moreau envelope) is defined as
\begin{equation*}
L_{\eta}(x,y) := \min_{u\in\mathbb{R}^n}\max_{v\in\mathbb{R}^m} L(u,v) +  \frac{\eta}{2}\|u-x\|^2 - \frac{\eta}{2}\|v-y\|^2.
\end{equation*}
We require that the parameter $\eta$ is selected with $\eta > \rho$, which ensures the minimax problem in~\eqref{eq:saddle-envelope} is strongly convex-strongly concave. As a result, the saddle envelope is well-defined (as its subproblem has a unique minimax point) and often can be efficiently approximated.

The saddle envelope generalizes the Moreau envelope from the minimization literature to minimax problems. To see this reduction, taking any objective $L(x,y)=g(x)$ (that is, one constant with respect to $y$) gives
$$ L_{\eta}(x,y) = \min_u \max_v\ g(u) + \frac{\eta}{2}\|u-x\|^2 - \frac{\eta}{2}\|v-y\|^2 = e_\eta\{g\}(x) \ .$$
We take careful note throughout our theory of similarities and differences from the simpler case of Moreau envelopes. We begin by considering how the value of the saddle envelope $L_\eta$ relates to the origin objective $L$.
Unlike the Moreau envelope in~\eqref{eq:moreau-lower-bound}, the saddle envelope fails to provide a lower bound. If the objective function is constant with respect to $y$, having $L(x,y)=g(x)$, the saddle envelope becomes a Moreau envelope and provides a lower bound for every $(x,y)$,
$$ L_{\eta}(x,y) = e_\eta\{g\}(x) \leq g(x)=L(x,y) \ .$$
Conversely, if $L(x,y)=h(y)$, then the saddle envelope provides an upper bound.
In generic settings between these extremes, the saddle envelope $L_\eta$ need not provide any kind of bound on $L$. The only generic relationship we can establish between $L(z)$ and $L_\eta(z)$ everywhere is that as $\eta\rightarrow \infty$, they approach each other. This result is formalized by~\cite{Attouch1983} through epi-hypo convergence.

In the following pair of subsections, we build on the classic results of~\cite{Attouch1983,Attouch1986,Aze1988} by deriving a calculus of the saddle envelope of nonconvex-nonconcave functions (in Section~\ref{subsec:saddle2}) and characterizing the smoothing and convexifying effects of this operation (in Section~\ref{subsec:saddle3}). 

\subsection{Calculus for the Saddle Envelope $L_\eta$} \label{subsec:saddle2}
Here we develop a calculus for the saddle envelope $L_\eta$, giving formulas for its gradient and Hessian in terms of the original objective $L$ and the proximal operator. These results immediately give algorithmic insights into the proximal point method.
First, we show that a generalization of the Moreau gradient formula~\eqref{eq:moreau-grad} and the convex-concave formula of~\cite[Theorem 5.1 (d)]{Attouch1986} holds.
\begin{lemma} \label{lem:grad-formula}
	The gradient of the saddle envelope $L_\eta(x,y)$ at $z=(x,y)$ is
	$$ \begin{bmatrix} \nabla_x L_\eta(z)\\  \nabla_y L_\eta(z)\end{bmatrix} = \begin{bmatrix} \eta(x-x_+)\\ \eta(y_+-y) \end{bmatrix} = \begin{bmatrix} \nabla_x L(z_+)\\  \nabla_y L(z_+)\end{bmatrix}$$
	where $z_+ = (x_+,y_+) = \prox_\eta(z)$ is given by the proximal operator.
\end{lemma}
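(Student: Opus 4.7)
The plan is a direct envelope theorem calculation, essentially the minimax analogue of the Moreau gradient formula~\eqref{eq:moreau-grad}. Because $\eta > \rho$, the inner objective $(u,v) \mapsto L(u,v) + \frac{\eta}{2}\|u-x\|^2 - \frac{\eta}{2}\|v-y\|^2$ is $(\eta-\rho)$-strongly convex in $u$ and $(\eta-\rho)$-strongly concave in $v$, so its unique saddle point $z_+ = (x_+,y_+) = \prox_\eta(z)$ exists and is characterized by the first-order conditions
\begin{align*}
\nabla_x L(z_+) + \eta(x_+ - x) &= 0, \\
\nabla_y L(z_+) - \eta(y_+ - y) &= 0.
\end{align*}
Rearranging immediately delivers the second equality of the lemma: $\nabla_x L(z_+) = \eta(x - x_+)$ and $\nabla_y L(z_+) = \eta(y_+ - y)$.

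For the first equality, the approach is to show that $(x_+,y_+)$ depends smoothly on $(x,y)$ and then apply the chain rule, letting Danskin-type cancellations do the work. Smoothness follows from the implicit function theorem applied to the optimality system above: the Jacobian with respect to $(u,v)$ is $\begin{bmatrix}\nabla^2_{xx}L(z_+) + \eta I & \nabla^2_{xy}L(z_+) \\ \nabla^2_{yx}L(z_+) & \nabla^2_{yy}L(z_+) - \eta I\end{bmatrix}$, whose diagonal blocks are positive and negative definite respectively by $\eta > \rho$, so the map $(x,y)\mapsto (x_+(x,y), y_+(x,y))$ is continuously differentiable.

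Now I would differentiate the identity
\[
L_\eta(x,y) = L(x_+,y_+) + \tfrac{\eta}{2}\|x_+ - x\|^2 - \tfrac{\eta}{2}\|y_+ - y\|^2
\]
with respect to $x$ via the chain rule, grouping terms by which partial of $(x_+,y_+)$ they multiply. The coefficient of $\partial x_+/\partial x$ collects to $\nabla_x L(z_+) + \eta(x_+ - x)$, and the coefficient of $\partial y_+/\partial x$ collects to $\nabla_y L(z_+) - \eta(y_+ - y)$; by the first-order conditions both vanish. All that survives is the explicit $x$-dependence in $\tfrac{\eta}{2}\|x_+ - x\|^2$, yielding $\nabla_x L_\eta(z) = \eta(x - x_+)$. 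The identical computation for $\nabla_y L_\eta$ gives $\eta(y_+ - y)$, completing the chain of equalities.

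There is no real obstacle here; the only point requiring a moment of care is the justification that $(x_+,y_+)$ is a continuously differentiable function of $(x,y)$ so that the chain rule applies, but this is immediate from the implicit function theorem as noted. The essential mechanism is exactly Danskin's envelope principle applied to a saddle subproblem, cleanly generalizing~\eqref{eq:moreau-grad} from the purely convex case and matching the convex-concave statement of \cite[Theorem 5.1(d)]{Attouch1986}.
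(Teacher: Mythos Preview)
Your argument is correct, but it takes a different route from the paper's own proof. The paper instead writes the saddle envelope as a nested composition of Moreau envelopes,
\[
L_\eta(x,y) = e_\eta\{g(\cdot,y)\}(x),\qquad g(u,y) = -e_\eta\{-L(u,\cdot)\}(y),
\]
and then invokes the known Moreau gradient identity~\eqref{eq:moreau-grad} on the outer envelope to read off $\nabla_x L_\eta(x,y)=\eta(x-x_+)$ directly, with a symmetric argument in $y$; the equality with $\nabla L(z_+)$ is then just the first-order optimality condition for the proximal subproblem. Your approach is a direct envelope/Danskin computation: implicit function theorem for smooth dependence of $z_+$ on $z$, then chain-rule differentiation of the defining identity with the first-order conditions killing the implicit terms. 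The paper's argument is shorter and reuses existing Moreau calculus without any appeal to the implicit function theorem or to second-order regularity of $L$; your argument is more self-contained and makes the Danskin cancellation mechanism explicit, at the mild cost of invoking $C^2$ regularity so that the implicit function theorem applies (which the paper is effectively assuming anyway). One small remark: your justification that the Jacobian is invertible because ``the diagonal blocks are positive and negative definite'' is correct for this symmetric block structure, but it is worth noting that the reason is a Schur complement argument (the complement $-\nabla^2_{yy}L+\eta I + \nabla^2_{yx}L(\nabla^2_{xx}L+\eta I)^{-1}\nabla^2_{xy}L$ is positive definite), not a general fact about block matrices.
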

\begin{proof}
	Notice that the saddle envelope is a composition of Moreau envelopes
	\begin{align*}
	L_{\eta}(x,y) &= \min_{u} \left(\max_{v} L(u,v) - \frac{\eta}{2}\|v-y\|^2\right) + \frac{\eta}{2}\|u-x\|^2\\
	&= \min_{u}\ -e_\eta\{-L(u,\cdot)\}(y) + \frac{\eta}{2}\|u-x\|^2 = e_{\eta}\{g(\cdot,y)\}(x)
	\end{align*}
	where $g(u,y) = -e_{\eta}\{-L(u,\cdot)\}(y)$. Applying the gradient formula~\eqref{eq:moreau-grad} gives our first claimed gradient formula in $x$ of
	$\nabla_x L_\eta(x,y) = \eta (x-x_+)$
	since $x_+$ is the unique minimizer of $u \mapsto g(u,y) + \frac{\eta}{2}\|u-x\|^2$. Symmetric reasoning gives our first claimed formula $\nabla_y L_\eta(x,y) = \eta (y_+-y) $ in $y$.
	The second claimed equality is precisely the first-order optimality condition for~\eqref{eq:prox-definition}. 
\end{proof}
\begin{corollary} \label{cor:stationarity}
	The stationary points of $L_\eta$ are exactly the same as those of $L$.
\end{corollary}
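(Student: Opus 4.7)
The plan is to read off both directions of the claim directly from Lemma 2.1, whose gradient formula
$$\nabla L_\eta(z) = \begin{bmatrix} \eta(x-x_+)\\ \eta(y_+-y)\end{bmatrix} = \begin{bmatrix} \nabla_x L(z_+)\\ \nabla_y L(z_+)\end{bmatrix}$$
gives two simultaneous expressions for the envelope gradient. The first equality ties envelope stationarity to $z$ being a fixed point of the proximal operator $\prox_\eta$; the second equality ties it to stationarity of the original $L$ at $z_+$. The whole argument is essentially combining these two views.

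For the easy direction, suppose $\nabla L_\eta(z)=0$. The first expression forces $x_+ = x$ and $y_+ = y$, so $z_+ = z$. Plugging this into the second expression gives $\nabla L(z) = \nabla L(z_+) = 0$. For the reverse direction, suppose $\nabla L(z)=0$. I would argue that $z$ is itself the proximal point $z_+$. Indeed, the subproblem defining $\prox_\eta(z)$ is
$$\min_u \max_v\ L(u,v) + \tfrac{\eta}{2}\|u-x\|^2 - \tfrac{\eta}{2}\|v-y\|^2,$$
and evaluating the gradient of its inner objective at $(u,v)=(x,y)$ yields $(\nabla_x L(x,y), -\nabla_y L(x,y))=0$, so $(x,y)$ is a saddle point of the subproblem. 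Under the standing assumption $\eta > \rho$, the subproblem is $(\eta-\rho)$-strongly convex-strongly concave, hence has a unique saddle point, so $z_+ = z$. Substituting $z_+ = z$ into the first expression of Lemma 2.1 gives $\nabla L_\eta(z) = 0$.

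There is no real obstacle here; the work is all encapsulated in Lemma 2.1 and the standing choice $\eta > \rho$. The only subtle point worth highlighting explicitly is that strong convexity-strong concavity of the subproblem upgrades the first-order condition ``$(x,y)$ is a stationary point of the subproblem'' to ``$(x,y)$ is the unique saddle point,'' which is precisely what identifies $z$ with $z_+$.
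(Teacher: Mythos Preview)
Your proof is correct and matches the paper's own argument essentially line for line: both directions pivot on Lemma~\ref{lem:grad-formula} to equate $\nabla L_\eta(z)=0$ with $z_+=z$, and the reverse direction uses the $(\eta-\rho)$-strong convexity--strong concavity of the proximal subproblem to upgrade the first-order condition at $z$ into $z$ being the unique saddle point. The only cosmetic slip is that the gradient of the subproblem objective at $(u,v)=(x,y)$ is $(\nabla_x L(x,y),\nabla_y L(x,y))$ rather than $(\nabla_x L(x,y),-\nabla_y L(x,y))$, but since both components vanish under the hypothesis this is immaterial.
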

\begin{proof}
	First consider any stationary point $z=(x,y)$ of $L$. Denote $z_+=\prox_\eta(z)$ and the objective function defining the proximal operator~\eqref{eq:prox-definition} as $M(u,v) = L(u,v) + \frac{\eta}{2}\|u-x\|^2-\frac{\eta}{2}\|v-y\|^2$. Then observing that $\nabla M(z)=0$, $z$ must be the unique minimax point of $M$ (that is, $z=z_+=\prox_\eta(z)$). Hence $z$ must be a stationary point of $L_\eta$ as well since $\nabla L_\eta(z) =\nabla L(z_+)=\nabla L(z)= 0$.
	
	Conversely consider a stationary point $z=(x,y)$ of $L_\eta$. Then $\eta(x-x_+) = \nabla_x L_\eta(z)=0$ and $\eta(y_+-y) = \nabla_y L_\eta(z)=0$. Hence we again find that $z=z_+=\prox_\eta(z)$ and consequently, this point must be a stationary point of $L$ as well since $\nabla L(z)=\nabla L(z_+)=\nabla L_\eta(z)=0$.
\end{proof}
\begin{corollary} \label{cor:PPM-to-GDA}
	One step of the (damped) PPM~\eqref{eq:saddle-PPM} on the original objective $L$ is equivalent to one step of GDA~\eqref{eq:GDA} on the saddle envelope $L_\eta$ with $s = \lambda/\eta$. 
\end{corollary}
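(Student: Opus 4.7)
The plan is to directly expand the damped PPM update and recognize the proximal displacement as the gradient of the saddle envelope via Lemma \ref{lem:grad-formula}. Let $z_k = (x_k, y_k)$ and let $z_+ = (x_+, y_+) = \prox_\eta(z_k)$. First, I would rewrite the damped PPM update rule \eqref{eq:saddle-PPM} in displacement form:
\[
x_{k+1} = x_k - \lambda(x_k - x_+), \qquad y_{k+1} = y_k + \lambda(y_+ - y_k).
\]

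Second, I would invoke Lemma \ref{lem:grad-formula}, which tells us $\eta(x_k - x_+) = \nabla_x L_\eta(z_k)$ and $\eta(y_+ - y_k) = \nabla_y L_\eta(z_k)$. Substituting this into the displacement form and factoring the $\eta$ into the stepsize gives
\[
\begin{bmatrix} x_{k+1} \\ y_{k+1} \end{bmatrix} = \begin{bmatrix} x_k \\ y_k \end{bmatrix} - \frac{\lambda}{\eta}\begin{bmatrix} \nabla_x L_\eta(z_k) \\ -\nabla_y L_\eta(z_k) \end{bmatrix},
\]
which is precisely one step of GDA \eqref{eq:GDA} applied to $L_\eta$ with stepsize $s = \lambda/\eta$.

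There is essentially no hard step here: once Lemma \ref{lem:grad-formula} is in hand, this corollary is a one-line substitution. The only thing to be careful about is the sign convention, namely that $\nabla_y L_\eta(z_k) = \eta(y_+ - y_k)$ (not $\eta(y_k - y_+)$), so that the ascent direction on $y$ in GDA matches the proximal displacement in PPM. Thus the proof reduces to checking the two component updates agree after identifying the stepsize $s = \lambda/\eta$.
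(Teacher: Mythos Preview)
Your proposal is correct and essentially identical to the paper's proof: both simply substitute the gradient formula from Lemma~\ref{lem:grad-formula} into the update rule and read off the equivalence with $s=\lambda/\eta$. The only cosmetic difference is that the paper starts from the GDA step on $L_\eta$ and simplifies to the damped PPM form, while you start from the PPM side and simplify to GDA.
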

\begin{proof}
	Let $(x^+_k,y^+_k) = \prox_\eta(x_k,y_k)$ and let $(x_{k+1}, y_{k+1})$ be a step of GDA on $L_\eta(x,y)$ from $(x_k,y_k)$ with step-size $s=\lambda/\eta$. Then
	\begin{align*}
	\begin{bmatrix} x_{k+1} \\ y_{k+1} \end{bmatrix} = \begin{bmatrix} x_{k} \\ y_{k}\end{bmatrix} - s\begin{bmatrix} \nabla_xL_\eta(z_k) \\ -\nabla_yL_\eta(z_k)\end{bmatrix}
	&= \begin{bmatrix} x_{k} \\ y_{k}\end{bmatrix} - \frac{\lambda}{\eta}\begin{bmatrix} \eta(x_k-x^+_k) \\ -\eta(y^+_k-y_k) \end{bmatrix}
	= (1-\lambda)\begin{bmatrix} x_{k} \\ y_{k}\end{bmatrix} + \lambda\begin{bmatrix} x^+_k \\ y^{+}_k \end{bmatrix}
	\end{align*}
	follows from Lemma~\ref{lem:grad-formula}.
\end{proof}

Similar to our previous lemma, the Hessian of the saddle envelope at some $z$ is determined by the Hessian of $L$ at $z_+=\prox_\eta(z)$. This formula generalizes the Moreau envelope's formula~\eqref{eq:moreau-hess} whenever $L$ is constant with respect to $y$.
\begin{lemma} \label{lem:hess-formula}
	The Hessian of the saddle envelope $L_\eta(z)$ is
	$$ \begin{bmatrix} \nabla^2_{xx} L_\eta(z) & \nabla^2_{xy} L_\eta(z)\\  -\nabla^2_{yx}L_\eta(z) & -\nabla^2_{yy} L_\eta(z)\end{bmatrix} = \eta I-\eta^2\left(\eta I+ \begin{bmatrix} \nabla^2_{xx} L(z_+) & \nabla^2_{xy} L(z_+)\\  -\nabla^2_{yx} L(z_+) & -\nabla^2_{yy} L(z_+)\end{bmatrix}\right)^{-1} $$
	where $z_+ = \prox_\eta(z)$. Since $\eta>\rho$, we have
	$$ \nabla^2_{xx} L_\eta(z) = \eta I -\eta^2\left(\eta I +\nabla_{xx} L(z_+) + \nabla^2_{xy} L(z_+)(\eta I -\nabla^2_{yy} L(z_+))^{-1}\nabla^2_{yx} L(z_+)\right)^{-1},$$
	$$ \nabla^2_{yy} L_\eta(z) = -\eta I +\eta^2\left(\eta I +\nabla_{yy} L(z_+) + \nabla^2_{yx} L(z_+)(\eta I +\nabla^2_{xx} L(z_+))^{-1}\nabla^2_{xy} L(z_+)\right)^{-1}.$$
\end{lemma}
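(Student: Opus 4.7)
The plan is to differentiate the first-order characterization of $L_\eta$ from Lemma~\ref{lem:grad-formula} by applying the implicit function theorem to the optimality conditions defining $z_+ = \prox_\eta(z)$. Write the ``saddle gradient'' map as
$$G(w) = \begin{bmatrix} \nabla_x L(w) \\ -\nabla_y L(w)\end{bmatrix}, \qquad G_\eta(z) = \begin{bmatrix} \nabla_x L_\eta(z) \\ -\nabla_y L_\eta(z)\end{bmatrix},$$
so that the Jacobian $\nabla G(w)$ is exactly the block matrix $H(w)$ appearing on the right-hand side of the claimed identity, and $\nabla G_\eta(z)$ is the block matrix appearing on the left. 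Lemma~\ref{lem:grad-formula} immediately gives $G_\eta(z) = \eta(z - z_+)$, so the task reduces to computing $\nabla z_+(z)$.

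First I would write the first-order optimality conditions of the strongly convex--strongly concave proximal subproblem~\eqref{eq:prox-definition} as $\Psi(z, z_+(z)) = 0$, where
$$\Psi(z, w) = G(w) + \eta\begin{bmatrix} w_1 - x \\ w_2 - y\end{bmatrix},$$
so $\partial_w \Psi = \eta I + H(w)$ and $\partial_z \Psi = -\eta I$. Since $L$ is $\rho$-weakly convex--weakly concave and $\eta > \rho$, the subproblem is strongly convex--strongly concave, and a short argument shows $\eta I + H(w)$ is invertible (its symmetric part equals $\eta I + \mathrm{blockdiag}(\nabla^2_{xx} L, -\nabla^2_{yy} L) \succeq (\eta - \rho)I \succ 0$). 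The implicit function theorem then yields $\nabla z_+(z) = \eta(\eta I + H(z_+))^{-1}$, and differentiating $G_\eta(z) = \eta(z - z_+)$ gives
$$\nabla G_\eta(z) = \eta I - \eta^2\bigl(\eta I + H(z_+)\bigr)^{-1},$$
which is precisely the first displayed formula.

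To extract the individual diagonal blocks, I would apply the standard Schur complement block-inverse formula to $\eta I + H(z_+)$. Writing $A = \nabla^2_{xx} L(z_+)$, $B = \nabla^2_{xy} L(z_+)$, $D = \nabla^2_{yy} L(z_+)$, the matrix has block form $\begin{bmatrix} \eta I + A & B \\ -B^T & \eta I - D\end{bmatrix}$. The weak convexity/concavity with $\eta > \rho$ ensures both $\eta I + A \succ 0$ and $\eta I - D \succ 0$, so the Schur complements are well defined. The upper-left block of the inverse equals $(\eta I + A + B(\eta I - D)^{-1} B^T)^{-1}$, and plugging this into the first formula produces the stated expression for $\nabla^2_{xx} L_\eta(z)$. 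The analogous computation for the lower-right block gives the expression for $-\nabla^2_{yy} L_\eta(z)$, and hence (after negation) for $\nabla^2_{yy} L_\eta(z)$.

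The step I expect to require the most care is the application of the implicit function theorem: I need to justify invertibility of $\eta I + H(z_+)$ as a (non-symmetric) matrix, not merely of its symmetric part. This can be done either by observing that $H(z_+) + \eta I$ equals the Hessian of the strongly convex--strongly concave saddle objective defining $\prox_\eta$ after the $y$-block is negated, or by directly checking that the Schur complement $\eta I + A + B(\eta I - D)^{-1}B^T$ is positive definite (which follows from $\eta I + A \succeq (\eta - \rho)I \succ 0$ and positive semidefiniteness of the correction term). Once invertibility is established, the remaining calculations are routine block-matrix algebra.
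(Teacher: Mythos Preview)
Your argument is correct, and the underlying idea---differentiate the identity $G_\eta(z)=\eta(z-z_+)$ using the optimality condition for $z_+$---is the same one the paper uses. The execution differs: you invoke the implicit function theorem to read off $\nabla z_+(z)=\eta(\eta I+H(z_+))^{-1}$ directly, whereas the paper carries out a first-principles perturbation argument. It introduces $z^\Delta=z+\Delta$, replaces $L$ by its second-order Taylor model $\widetilde L$ at $z_+$, solves the resulting linear optimality system exactly to get $\widetilde z_+^\Delta-z_+=\eta(\eta I+H(z_+))^{-1}\Delta$, and then uses the strong convexity--concavity bound (Lemma~\ref{lem:helper0}) to show $\|z_+^\Delta-\widetilde z_+^\Delta\|=o(\|\Delta\|)$. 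In effect the paper reproves the implicit function theorem by hand for this particular map. Your route is shorter and more transparent; the paper's route is more self-contained and, technically, only uses differentiability of $\nabla L$ at the single point $z_+$ rather than the $C^1$ regularity that the standard implicit function theorem requires---a distinction that matters here because the paper assumes $L$ is merely twice differentiable, not $C^2$. If you want your proof to match the paper's standing hypotheses exactly, you would either cite a pointwise version of the implicit function theorem or note this mild strengthening of the regularity assumption.
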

\begin{proof}
	Consider some $z=(x,y)$ and a nearby point $z^\Delta=z+\Delta$. Denote one proximal step from each of these points by $z_+=(x_+,y_+)= \prox_\eta(z)$ and $z^\Delta_+=(x^\Delta_+,y^\Delta_+)= \prox_\eta(z^\Delta)$. 
	Then our claimed Hessian formula amounts to showing
	\begin{align*}
		\begin{bmatrix} \nabla_x L_\eta(z^\Delta) \\ -\nabla_y L_\eta(z^\Delta) \end{bmatrix} - \begin{bmatrix} \nabla_x L_\eta(z) \\ -\nabla_y L_\eta(z) \end{bmatrix}  &= \left(\eta I - \eta^2\left(\eta I + \begin{bmatrix} \nabla^2_{xx} L(z_+) & \nabla^2_{xy} L(z_+)\\  -\nabla^2_{yx} L(z_+) & -\nabla^2_{yy} L(z_+)\end{bmatrix}\right)^{-1}\right)\begin{bmatrix} \Delta_x\\ \Delta_y  \end{bmatrix} +o(\|\Delta\|).
	\end{align*}
	Recall Lemma~\ref{lem:grad-formula} showed the gradient of the saddle envelope is given by $\nabla_x L_\eta(z) = \eta(x-x_+)$ and $\nabla_y L_\eta(z) = \eta(y_+-y)$. Applying this at $z$ and $z_+$ and dividing by $\eta$, our claimed Hessian formula becomes
	\begin{equation} \label{eq:target-hessian-condition}
	\begin{bmatrix} x^\Delta_+-x_+\\ y^\Delta_+-y_+ \end{bmatrix} = \eta\left(\eta I + \begin{bmatrix} \nabla^2_{xx} L(z_+) & \nabla^2_{xy} L(z_+)\\  -\nabla^2_{yx} L(z_+) & -\nabla^2_{yy} L(z_+)\end{bmatrix}\right)^{-1}\begin{bmatrix} \Delta_x\\ \Delta_y \end{bmatrix} +o(\|\Delta\|) \ .
	\end{equation}
	Our proof shows this in two steps: first considering a proximal step on the second-order Taylor approximation of $L$ at $z_+$ and then showing this closely matches the result of a proximal step on $L$.
	
	First, consider the following quadratic model of the objective around $z_+$:
	$$ \widetilde L(z) = L(z_+) + \nabla L(z_+)^T(z-z_+) + \frac{1}{2}(z-z_+)^T\nabla^2 L(z_+)(z-z_+) \ . $$
	Denote the result of one proximal step on $\widetilde L$ from $z^\Delta$ by $\widetilde z^\Delta_+ = (\widetilde x_+^\Delta, \widetilde y_+^\Delta)$. Since the proximal subproblem is strongly convex-strongly concave, this solution is uniquely determined by
	$$
	\begin{bmatrix} \nabla_x \widetilde L(\widetilde x^\Delta_+,\widetilde y^\Delta_+)\\  -\nabla_y \widetilde L(\widetilde x^\Delta_+,\widetilde y^\Delta_+)\end{bmatrix} + \begin{bmatrix} \eta(\widetilde x^\Delta_+-x^\Delta)\\ \eta(\widetilde y^\Delta_+-y^\Delta) \end{bmatrix} = 0 \ .
	$$
	Plugging in the definition of our quadratic model $\widetilde L$ yields
	\begin{align*}
	&\begin{bmatrix} \nabla_x L(z_+)\\ -\nabla_y L(z_+)\end{bmatrix} + \begin{bmatrix} \nabla^2_{xx} L(z_+) & \nabla^2_{xy} L(z_+)\\  -\nabla^2_{yx} L(z_+) & -\nabla^2_{yy} L(z_+)\end{bmatrix}\begin{bmatrix} \widetilde x^\Delta_+-x_+\\ \widetilde y^\Delta_+-y_+ \end{bmatrix} + \begin{bmatrix} \eta(\widetilde x^\Delta_+-x^\Delta)\\ \eta(\widetilde y^\Delta_+-y^\Delta) \end{bmatrix} = 0 \ .
	\end{align*}
	Hence
	\begin{align*}
	& \left(\eta I + \begin{bmatrix} \nabla^2_{xx} L(z_+) & \nabla^2_{xy} L(z_+)\\  -\nabla^2_{yx} L(z_+) & -\nabla^2_{yy} L(z_+)\end{bmatrix}\right)\begin{bmatrix} \widetilde x^\Delta_+-x_+\\  \widetilde y^\Delta_+-y_+ \end{bmatrix} = \eta \begin{bmatrix} x^\Delta - x_+ -\eta^{-1}\nabla_x L(z_+) \\ y^\Delta - y_+ +\eta^{-1}\nabla_y L(z_+) \end{bmatrix}\\
	&\implies  \left(\eta I + \begin{bmatrix} \nabla^2_{xx} L(z_+) & \nabla^2_{xy} L(z_+)\\  -\nabla^2_{yx} L(z_+) & -\nabla^2_{yy} L(z_+)\end{bmatrix}\right)\begin{bmatrix} \widetilde x^\Delta_+-x_+\\  \widetilde y^\Delta_+-y_+ \end{bmatrix} = \eta \begin{bmatrix} \Delta_x\\ \Delta_y \end{bmatrix}\\
	&\implies  \begin{bmatrix} \widetilde x^\Delta_+-x_+\\ \widetilde y^\Delta_+-y_+ \end{bmatrix} = \eta\left(\eta I + \begin{bmatrix} \nabla^2_{xx} L(z_+) & \nabla^2_{xy} L(z_+)\\  -\nabla^2_{yx} L(z_+) & -\nabla^2_{yy} L(z_+)\end{bmatrix}\right)^{-1}\begin{bmatrix} \Delta_x\\ \Delta_y \end{bmatrix}.
	\end{align*}
	
	This is nearly our target condition~\eqref{eq:target-hessian-condition}. All that remains is to show our second-order approximation satisfies $\| z^\Delta_+ - \widetilde z^\Delta_+\| = o(\|\Delta\|)$.
	Denote the proximal subproblem objective by $ M^\Delta (u,v) = L(u,v) + \frac{\eta}{2}\|u-x^\Delta\|^2 - \frac{\eta}{2}\|v-y^\Delta\|^2$ and its approximation by $\widetilde M^\Delta (u,v) = \widetilde L(u,v) + \frac{\eta}{2}\|u-x^\Delta\|^2 - \frac{\eta}{2}\|v-y^\Delta\|^2$.
	Noting that $\|\nabla \widetilde M^\Delta (x_+,y_+)\| = \eta\|\Delta\|$, we can apply Lemma~\ref{lem:helper0} to the $(\eta-\rho)$-strongly convex-strongly concave function $\widetilde M^\Delta$ to bound the distance to its minimax point as
	$$ \|z_+-\widetilde z_+^\Delta\|\leq \frac{\eta}{\eta-\rho}\|\Delta\| \ .$$
	Consequently, we can bound difference in gradients between $L$ and its quadratic model $\widetilde L$ at $\widetilde z^\Delta_+$ by $ \|\nabla L(\widetilde z^\Delta_+) - \nabla \widetilde L(\widetilde z^\Delta_+)\| = o(\|\Delta\|)$. Therefore $\|\nabla M^\Delta(\widetilde z^\Delta_+)\| = o(\|\Delta\|)$ and so applying Lemma~\ref{lem:helper0} to the strongly convex-strongly concave function $M^\Delta$ bounds the distance to its minimax point as $\|z^\Delta_+ - \widetilde z^\Delta_+\| = o(\|\Delta\|)$, which completes our proof.
\end{proof}
A careful understanding of the saddle envelope's Hessian allows us to describe its smoothness and when it is convex-concave. This is carried out in the following section and forms the crucial step in enabling our convergence analysis for nonconvex-nonconcave problems. 

\subsection{Smoothing and Convexifing from the Saddle Envelope} \label{subsec:saddle3}
Recall that the Moreau envelope $e_\eta\{f\}$ serves as a smoothing of any $\rho$-weakly convex function since its Hessian has uniform bounds above and below~\eqref{eq:moreau-hessian-bounds}.
The lower bound on the Moreau envelope's Hessian guarantees it is convex exactly when the given function $f$ is convex (that is, $\rho=0$), and strongly convex if and only if $f$ is strongly convex.

In the convex-concave case~\cite[Proposition 2.2]{Aze1988} established the saddle envelope has $1/\eta$-Lipschitz gradient. Our Hessian formula in Lemma~\ref{lem:hess-formula} allows us to quantify the envelope's smoothness for nonconvex-nonconcave objectives. Outstandingly, we find that the minimax extension of this result is much more powerful than its Moreau counterpart. The saddle envelope will be convex-concave not just when $L$ is convex-concave, but whenever the following interaction dominance condition holds with a nonnegative parameter $\alpha$. 
\begin{definition}\label{def:interaction-dominance}
	A function $L$ is $\alpha$-interaction dominant with respect to $x$ if
	\begin{align}
	\nabla^2_{xx} L(z) + \nabla^2_{xy} L(z)(\eta I -\nabla^2_{yy} L(z))^{-1}\nabla^2_{yx} L(z) &\succeq \alpha I \label{eq:x-dominance}
	\end{align}	   
	and $\alpha$-interaction dominant with respect to $y$ if
	\begin{align}
	-\nabla^2_{yy} L(z) + \nabla^2_{yx} L(z)(\eta I +\nabla^2_{xx} L(z))^{-1}\nabla^2_{xy} L(z) &\succeq \alpha I\ . \label{eq:y-dominance}   
	\end{align}	  
\end{definition}

For any $\rho$-weakly convex-weakly concave function $L$, interaction dominance holds with $\alpha=-\rho$ since the second term in these definitions is always positive semidefinite. As a consequence, any convex-concave function is $\alpha \geq 0$-interaction dominant with respect to both $x$ and $y$. Further, nonconvex-nonconcave functions are interaction dominant with $\alpha\geq 0$ when the second terms above are sufficiently positive definite (hence the name ``interaction dominant'' as the interaction term of the Hessian $\nabla^2_{xy} L(z)$ is dominating any negative curvature in Hessians $\nabla^2_{xx} L(z)$ and $-\nabla^2_{yy} L(z)$). For example, any problem with $\beta$-Lipschitz gradient in $y$ has interaction dominance in $x$ hold with non-negative parameter whenever
$$ \frac{\nabla^2_{xy} L(z)\nabla^2_{yx} L(z)}{\eta+\beta} \succeq -\nabla^2_{xx} L(z) $$
since $\eta I -\nabla^2_{yy} L(z) \preceq (\eta+\beta)I$. Similarly, any problem with $\beta$-Lipschitz gradient in $x$ has interaction dominance in $y$ with a non-negative parameter whenever
$$ \frac{\nabla^2_{yx} L(z)\nabla^2_{xy} L(z)}{\eta+\beta} \succeq \nabla^2_{yy} L(z) \ .$$

The following proposition derives bounds on the Hessian of the saddle envelope showing it is convex in $x$ (concave in $y$) whenever $\alpha\geq 0$-interaction dominance holds in $x$ (in $y$). Further, its Hessian lower bounds ensure that $L_{\eta}$ is  $(\eta^{-1} + \alpha^{-1})^{-1}$-strongly convex in $x$ (strongly concave in $y$) whenever $\alpha>0$-interaction dominance holds in $x$ (in $y$).


\begin{proposition} \label{prop:hessians}
	If the $x$-interaction dominance~\eqref{eq:x-dominance} holds with $\alpha\in\RR$, the saddle envelope is smooth and weakly convex with respect to $x$ 
	$$  (\eta^{-1} + \alpha^{-1})^{-1}I \preceq \nabla^2_{xx}L_\eta(z) \preceq \eta I\ ,$$
	and if the $y$-interaction dominance condition~\eqref{eq:y-dominance} holds with $\alpha\in\RR$, the saddle envelope is smooth and weakly concave with respect to $y$
	$$  (\eta^{-1} + \alpha^{-1})^{-1}I \preceq -\nabla^2_{yy}L_\eta(z) \preceq \eta I\ .$$
\end{proposition}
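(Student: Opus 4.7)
The proposition splits into independent claims about $x$-convexity and $y$-concavity, and since the $y$-case is symmetric, I will focus on the $x$-case. The plan is to feed the $x$-interaction dominance assumption directly into the explicit Hessian formula from Lemma~\ref{lem:hess-formula} and conclude via operator monotonicity of matrix inversion on the positive definite cone.

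First I would abbreviate the Schur-complement-like matrix inside the Hessian formula,
$$ H := \nabla^2_{xx} L(z_+) + \nabla^2_{xy} L(z_+)\left(\eta I - \nabla^2_{yy} L(z_+)\right)^{-1} \nabla^2_{yx} L(z_+), $$
so that Lemma~\ref{lem:hess-formula} reads $\nabla^2_{xx} L_\eta(z) = \eta I - \eta^2 (\eta I + H)^{-1}$. Two basic invertibility observations come next. The standing assumption $\eta > \rho$ makes $\eta I - \nabla^2_{yy} L(z_+)$ positive definite, so $H$ is well-defined and symmetric. Combining this with the $\rho$-weak convexity of $L$ in $x$, the second summand in $H$ is positive semidefinite, so $H \succeq -\rho I$ and hence $\eta I + H \succeq (\eta - \rho) I \succ 0$. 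Thus the inverse appearing in the Hessian formula exists and is positive definite.

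The bounds are now two short computations. By hypothesis, the $x$-interaction dominance condition yields $H \succeq \alpha I$, whence $\eta I + H \succeq (\eta + \alpha) I \succ 0$, and by operator monotonicity of inversion on the positive definite cone I get $0 \prec (\eta I + H)^{-1} \preceq (\eta + \alpha)^{-1} I$. Substituting into the Hessian formula produces
$$ \eta I - \frac{\eta^2}{\eta + \alpha}I \;\preceq\; \nabla^2_{xx} L_\eta(z) \;\preceq\; \eta I, $$
and the elementary identity $\eta - \eta^2/(\eta + \alpha) = \eta\alpha/(\eta + \alpha) = (\eta^{-1} + \alpha^{-1})^{-1}$ rewrites the lower bound in the claimed form. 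The $y$-claim follows identically from the second formula in Lemma~\ref{lem:hess-formula} applied with the $y$-interaction dominance hypothesis.

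No step is genuinely hard. The only place requiring care is verifying $\eta I + H \succ 0$ before invoking operator monotonicity; this is where the blanket assumption $\eta > \rho$ is needed and where one must notice that the Schur-like term in $H$ is automatically positive semidefinite. Once positive definiteness of the inverted matrix is in hand, the argument reduces to manipulation of scalar eigenvalues.
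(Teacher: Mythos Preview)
Your proof is correct and follows essentially the same route as the paper: feed the interaction-dominance lower bound on the Schur-complement block into the explicit Hessian formula of Lemma~\ref{lem:hess-formula}, apply operator monotonicity of inversion, and simplify the scalar $\eta - \eta^2/(\eta+\alpha)$. The only cosmetic difference is that you extract both bounds from the single chain $0 \prec (\eta I + H)^{-1} \preceq (\eta+\alpha)^{-1}I$, whereas the paper writes the lower and upper bounds as two separate observations; you are also slightly more explicit about why $\eta I + H \succ 0$ before inverting.
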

\begin{proof}
	Recall the formula for the $x$ component of the Hessian given by Lemma~\ref{lem:hess-formula}.
	Then the interaction dominance condition~\eqref{eq:x-dominance} can lower bound this Hessian by
	\begin{align*}
	\nabla^2_{xx} L_\eta(z) &=\eta I -\eta^2\left(\eta I +\nabla_{xx} L(z_+) + \nabla^2_{xy} L(z_+)(\eta I -\nabla^2_{yy} L(z_+))^{-1}\nabla^2_{yx} L(z_+)\right)^{-1}\\
	&\succeq \eta I -\eta^2\left(\eta I +\alpha I\right)^{-1}\\
	&=(\eta - \eta^2/(\eta+\alpha))I\\
	&=(\eta^{-1} + \alpha^{-1})^{-1}I \ .
	\end{align*}
	Note that $\eta I +\nabla^2_{xx} L(z_+)$ is positive definite (since $\eta>\rho$) and $\nabla^2_{xy} L(z_+)(\eta I +\nabla^2_{yy} L(z_+))^{-1}\nabla^2_{yx} L(z_+)$ is positive semidefinite (since its written as a square). Then the inverse of their sum must also be positive definite and consequently $\nabla^2_{xx} L_\eta(z)$ is upper bounded by
	\begin{align*}
	\eta I -\eta^2\left(\eta I +\nabla_{xx} L(z_+) + \nabla^2_{xy} L(z_+)(\eta I +\nabla^2_{yy} L(z_+))^{-1}\nabla^2_{yx} L(z_+)\right)^{-1} &\preceq \eta I \ .
	\end{align*}
	Symmetric reasoning applies to give bounds on $\nabla^2_{yy} L_\eta(z)$.
\end{proof}
\begin{remark}
	Note that our definition of interaction dominance depends on the choice of the proximal parameter $\eta >\rho$. In our convergence theory, we will show that interaction dominance with nonnegative $\alpha>0$ captures when the proximal point method with the same parameter $\eta$ converges.
\end{remark}
\begin{remark}
	Proposition~\ref{prop:hessians} generalizes the Hessian bounds for the Moreau envelope~\eqref{eq:moreau-hessian-bounds} since for any $L(x,y)$ that is constant in $y$, the $\alpha$-interaction dominance condition in $x$ simplifies to simply be $\rho$-weak convexity
	$ \nabla^2_{xx} L(z) + \nabla^2_{xy} L(z)(\eta I -\nabla^2_{yy} L(z))^{-1}\nabla^2_{yx} L(z) = \nabla^2_{xx} L(z) \succeq \alpha I$. Hence this special case has $\alpha=-\rho$.
\end{remark}

In addition to bounding the Hessians of the $x$ and $y$ variables separately, we can also bound the overall smoothness of the saddle envelope. Our next result shows that the saddle envelope maintains the same $\max\{\eta, |\eta^{-1}-\rho^{-1}|^{-1}\}$-smoothing effect as the Moreau envelope~\eqref{eq:moreau-hessian-bounds}.
\begin{proposition}\label{prop:smoothness}
	$L_\eta$ has $\max\{\eta, |\eta^{-1}-\rho^{-1}|^{-1}\}$-Lipschitz gradient.
\end{proposition}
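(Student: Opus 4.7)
The plan is to extend the classical Moreau-envelope smoothness argument into the minimax setting via monotone-operator reasoning. I would first define the saddle vector field $F(z) = (\nabla_x L(z), -\nabla_y L(z))$ and let $J = \mathrm{diag}(I_n, -I_m)$. The weak convexity-concavity hypothesis means the symmetric part of $\nabla F = J\nabla^2 L$, which is the block-diagonal matrix $\mathrm{diag}(\nabla^2_{xx} L, -\nabla^2_{yy} L)$, is uniformly $\succeq -\rho I$. Integrating along line segments then upgrades this pointwise bound into the global weak-monotonicity inequality
$$\langle F(z) - F(z'),\, z - z'\rangle \geq -\rho\|z - z'\|^2.$$

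Next I would rewrite the first-order optimality for the proximal subproblem as the compact identity $F(z_+) = \eta(z - z_+)$. Applying this at two inputs $z, z'$ and inserting into weak monotonicity evaluated at $z_+, z'_+$ rearranges to the firmly-nonexpansive-like bound
$$\langle z - z',\, z_+ - z'_+\rangle \geq \tfrac{\eta - \rho}{\eta}\|z_+ - z'_+\|^2,$$
from which Cauchy--Schwarz recovers the $\eta/(\eta-\rho)$-Lipschitz property of the prox (already established inside the proof of Lemma~\ref{lem:hess-formula}). By Lemma~\ref{lem:grad-formula}, $\nabla L_\eta(z) = \eta J(z - z_+)$, so the orthogonality of $J$ reduces bounding the Lipschitz constant of $\nabla L_\eta$ to bounding $\eta$ times that of the residual map $z \mapsto z - z_+$.

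I would then expand $\|(z-z') - (z_+ - z'_+)\|^2$ via the parallelogram identity and apply the inner-product inequality above to obtain
$$\|(z-z') - (z_+ - z'_+)\|^2 \leq \|z - z'\|^2 + \tfrac{2\rho - \eta}{\eta}\|z_+ - z'_+\|^2.$$
A case split finishes the argument. When $\eta \geq 2\rho$ the second term is nonpositive and dropped, yielding the $\eta$ Lipschitz constant. When $\rho < \eta < 2\rho$, inserting the prox-Lipschitz bound $\|z_+ - z'_+\|^2 \leq \eta^2/(\eta-\rho)^2\,\|z-z'\|^2$ and using the algebraic identity $(\eta-\rho)^2 + (2\rho - \eta)\eta = \rho^2$ collapses the right-hand side to $\rho^2/(\eta-\rho)^2\|z-z'\|^2$, yielding the $\eta\rho/(\eta - \rho)$ Lipschitz constant. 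Taking the maximum matches the claimed $\max\{\eta, |\eta^{-1} - \rho^{-1}|^{-1}\}$. The only real subtlety is this algebraic collapse in the second case; a direct Hessian-level attack via Lemma~\ref{lem:hess-formula} would be harder because $J\nabla^2 L$ is generically nonsymmetric and can have complex spectrum, so the monotone-operator route is both cleaner and sharper.
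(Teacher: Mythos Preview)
Your proposal is correct and follows essentially the same route as the paper's proof. The paper obtains your key inequality $\langle z-z',\,z_+-z'_+\rangle \geq \tfrac{\eta-\rho}{\eta}\|z_+-z'_+\|^2$ by applying Lemma~\ref{lem:helper0} to the $(\eta-\rho)$-strongly convex--strongly concave proximal objective $M$, which is exactly your weak-monotonicity argument for $F$ in different clothing; it then performs the identical expansion of $\|(z-z')-(z_+-z'_+)\|^2$, the same sign-based case split on $\tfrac{\eta}{\eta-\rho}-2$ (equivalently $2\rho-\eta$), and the same algebraic collapse to $\rho^2/(\eta-\rho)^2$ in the second case. Your operator-theoretic framing with $F$ and the orthogonal $J$ is a clean packaging, but substantively the two arguments coincide step for step.
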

\begin{proof}
	Consider two points $z=(x,y)$ and $\bar z = (\bar x, \bar y)$ and denote one proximal step from each of them by $z_+=(x_+,y_+) = \prox_\eta(z)$ and $\bar z_+ = (\bar x_+,\bar y_+) = \prox_\eta(\bar z)$.
	Define the  $(\eta-\rho)$-strongly convex-strongly concave function underlying the computation of the saddle envelope at $z$ as
	$$ M(u,v) = L(u,v) + \frac{\eta}{2}\|u-x\|^2 -\frac{\eta}{2}\|v-y\|^2. $$
	First we compute the gradient of $M$ at $\bar z_+$ which is given by
	\begin{align*}
	\begin{bmatrix} \nabla_x M(\bar z_+) \\ \nabla_y M(\bar z_+)\end{bmatrix} =  \begin{bmatrix} \nabla_x L(\bar z_+) + \eta(\bar x_+-x) \\ \nabla_y L(\bar z_+) - \eta(\bar y_+-y)\end{bmatrix}
	= \eta \begin{bmatrix} \bar x-x \\ y-\bar y\end{bmatrix}.
	\end{align*}
	Applying Lemma~\ref{lem:helper0}, and noting that $z_+=\prox_\eta(z)$ has $\nabla M(z_+)=0$ yields
	$$ \left\|\begin{bmatrix} \bar x_+-x_+ \\ \bar y_+-y_+ \end{bmatrix}\right\|^2 \leq \frac{\eta}{\eta-\rho} \begin{bmatrix} \bar x- x \\ \bar y- y \end{bmatrix}^T\begin{bmatrix} \bar x_+-x_+ \\ \bar y_+-y_+ \end{bmatrix}.$$
	Recalling the saddle envelope's gradient formula from Lemma~\ref{lem:grad-formula}, we can upper bound the difference between its gradients at $z$ and $\bar z$ by
	\begin{align*}
	\frac{1}{\eta^{2}}\|\nabla L_\eta(z) - \nabla L_\eta(\bar z)\|^2
	&= \left\|\begin{bmatrix} x-x_+ \\ y_+-y \end{bmatrix} - \begin{bmatrix} \bar x-\bar x_+ \\ \bar y_+-\bar y \end{bmatrix}\right\|^2\\
	&=\left\|\begin{bmatrix} x-\bar x \\ y-\bar y \end{bmatrix}\right\|^2 + 2 \begin{bmatrix} x-\bar x \\ y-\bar y \end{bmatrix}^T\begin{bmatrix} \bar x_+- x_+ \\ \bar y_+- y_+ \end{bmatrix} +\left\|\begin{bmatrix} \bar x_+- x_+ \\ \bar y_+- y_+ \end{bmatrix}\right\|^2\\
	&\leq \left\|\begin{bmatrix} x-\bar x \\ y-\bar y \end{bmatrix}\right\|^2 + \left(\frac{\eta}{\eta-\rho}-2\right)\begin{bmatrix} \bar x- x \\ \bar y- y \end{bmatrix}^T\begin{bmatrix} \bar x_+- x_+ \\ \bar y_+- y_+ \end{bmatrix}.
	\end{align*}
	Notice that $\begin{bmatrix} \bar x- x \\ \bar y- y \end{bmatrix}^T\begin{bmatrix} \bar x_+- x_+ \\ \bar y_+- y_+ \end{bmatrix}$ is non-negative but the sign of $\left(\frac{\eta}{\eta-\rho}-2\right)$ may be positive or negative. If this coefficient is negative, we can upperbound the second term above by zero, giving $	\|\nabla L_\eta(z) - \nabla L_\eta(\bar z)\|^2 \leq \eta^2\left\|z-\bar z\right\|^2 .$
	If instead $\left(\frac{\eta}{\eta-\rho}-2\right)\geq 0$, then we have smoothness constant $|\eta^{-1}-\rho^{-1}|^{-1}$ as
	\begin{align*}
	\|\nabla L_\eta(z) - \nabla L_\eta(\bar z)\|^2 &\leq \eta^2\left(1 + \left(\frac{\eta}{\eta-\rho}-2\right)\frac{\eta}{\eta-\rho}\right)\left\|\begin{bmatrix} x-\bar x \\ y-\bar y \end{bmatrix}\right\|^2 \\
	&= \eta^2\left(\frac{\eta}{\eta-\rho}-1\right)^2\left\|\begin{bmatrix} x-\bar x \\ y-\bar y \end{bmatrix}\right\|^2 \\
	&= \left(\frac{\eta\rho}{\eta-\rho}\right)^2\left\|\begin{bmatrix} x-\bar x \\ y-\bar y \end{bmatrix}\right\|^2 \ .
	\end{align*}
	by Cauchy–Schwarz and using that $\left\|\begin{bmatrix} \bar x_+- x_+ \\ \bar y_+- y_+ \end{bmatrix}\right\| \leq \frac{\eta}{\eta-\rho}\left\|\begin{bmatrix} \bar x- x \\ \bar y- y \end{bmatrix}\right\|$.
\end{proof}
The setting of taking the Moreau envelope of a convex function gives a simpler smoothness bound of $\eta$ since having $\rho\leq 0$ implies $\eta = \max\{\eta, |\eta^{-1}-\rho^{-1}|^{-1}\}$. The same simplification holds when applying our saddle envelope machinery to convex-concave problems: the saddle envelope of any convex-concave $L$ is $\eta$-smooth, matching the results of~\cite[Proposition 2.1]{Aze1988}.

\section{Interaction Dominant Regime}\label{sec:interaction-dominate}
Our theory for the saddle envelope $L_{\eta}(z)$ shows it is much more structured than the original objective function $L(z)$. 
Proposition~\ref{prop:hessians} established that for $x$ and $y$ interaction dominant problems, the saddle envelope is strongly convex-strongly concave. Proposition~\ref{prop:smoothness} established that the saddle envelope is always smooth (has a uniformly Lipschitz gradient). Both of these results hold despite us not assuming convexity, concavity, or smoothness of the original objective. Historically these two conditions are the key to linear convergence (see Theorem~\ref{thm:standard-contraction}) and indeed we find interaction dominance causes the proximal point method to linearly converge. The proof of this result is deferred to the end of the section.
\begin{theorem}\label{thm:two-sided-dominance}
	For any objective $L$ that is $\rho$-weakly convex-weakly concave and $\alpha > 0$-interaction dominant in both $x$ and $y$, the damped PPM~\eqref{eq:saddle-PPM} with $\eta$ and $\lambda$ satisfying
	$$ \lambda \leq 2\frac{\min\left\{1,(\eta/\rho-1)^2\right\}}{\eta/\alpha+1} $$
	linearly converges to the unique stationary point $(x^*,y^*)$ of~\eqref{eq:main-problem} with
	$$\left\|\begin{bmatrix} x_k-x^* \\ y_k-y^* \end{bmatrix}\right\|^2 \leq \left(1 - \frac{2\lambda}{\eta/\alpha+1} + \frac{\lambda^2}{\min\left\{1,(\eta/\rho-1)^2\right\}}\right)^{k}\left\|\begin{bmatrix} x_0-x^* \\ y_0-y^* \end{bmatrix}\right\|^2.$$
	For example, setting $\eta=2\rho$ and $\lambda=\frac{1}{1+\eta/\alpha} $, our convergence rate simplifies to
	$$\left\|\begin{bmatrix} x_k-x^* \\ y_k-y^* \end{bmatrix}\right\|^2 \leq \left(1-\frac{1}{(2\rho/\alpha+1)^2} \right)^{k}\left\|\begin{bmatrix} x_0-x^* \\ y_0-y^* \end{bmatrix}\right\|^2.$$
\end{theorem}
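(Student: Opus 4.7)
The plan is to reduce the statement to the already established GDA contraction in Theorem~\ref{thm:standard-contraction} applied not to $L$ itself but to its saddle envelope $L_\eta$. Corollary~\ref{cor:PPM-to-GDA} tells us that one step of damped PPM on $L$ with parameters $(\eta,\lambda)$ is identical to one step of GDA on $L_\eta$ with step-size $s=\lambda/\eta$, so any contraction estimate for GDA on $L_\eta$ immediately transports to an identical one for PPM on $L$.

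Next, I would invoke our two saddle-envelope structure results to certify that $L_\eta$ is a uniformly well-conditioned strongly convex-strongly concave smooth minimax problem on all of $\RR^n\times\RR^m$. By Proposition~\ref{prop:hessians}, two-sided $\alpha>0$-interaction dominance gives that $L_\eta$ is $\mu$-strongly convex in $x$ and $\mu$-strongly concave in $y$ with $\mu=(\eta^{-1}+\alpha^{-1})^{-1}=\alpha\eta/(\alpha+\eta)$. By Proposition~\ref{prop:smoothness}, $L_\eta$ has $\beta$-Lipschitz gradient with $\beta=\max\{\eta,\,\eta\rho/(\eta-\rho)\}$. These global properties guarantee that $L_\eta$ has a unique minimax point, which by Corollary~\ref{cor:stationarity} coincides with the unique stationary point $(x^*,y^*)$ of $L$. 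Because the strong convexity-concavity and smoothness hold on all of $\RR^n\times\RR^m$, the ball condition $r\ge 2\|\nabla L_\eta(z_0)\|/\mu$ in Theorem~\ref{thm:standard-contraction} is trivially met (take $r$ arbitrarily large).

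Then I apply Theorem~\ref{thm:standard-contraction} to GDA on $L_\eta$ with $s=\lambda/\eta$, giving the contraction factor $1-2\mu s+\beta^{2}s^{2}$ whenever $s\in(0,2\mu/\beta^{2})$. The key algebraic identity is
\[
\frac{\eta^{2}}{\beta^{2}} \;=\; \min\!\left\{1,\ \frac{(\eta-\rho)^{2}}{\rho^{2}}\right\} \;=\; \min\{1,\,(\eta/\rho-1)^{2}\},
\]
obtained by squaring each branch of $\max\{\eta,\,\eta\rho/(\eta-\rho)\}$. Combined with $\mu\eta=\alpha\eta^{2}/(\alpha+\eta)$, substituting these into $2\mu s=2\mu\lambda/\eta$ and $\beta^{2}s^{2}=\beta^{2}\lambda^{2}/\eta^{2}$ yields exactly the stated step-size bound $\lambda\le 2\min\{1,(\eta/\rho-1)^{2}\}/(\eta/\alpha+1)$ and the stated rate $1-\tfrac{2\lambda}{\eta/\alpha+1}+\tfrac{\lambda^{2}}{\min\{1,(\eta/\rho-1)^{2}\}}$. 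In the specialization $\eta=2\rho$ one has $(\eta/\rho-1)^{2}=1$, so $\min\{1,(\eta/\rho-1)^{2}\}=1$, and the choice $\lambda=1/(1+\eta/\alpha)$ makes $2\mu s=2\lambda^{2}$, collapsing the factor to $1-\lambda^{2}=1-1/(2\rho/\alpha+1)^{2}$.

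The main obstacle, and indeed essentially the only nontrivial content, is \emph{not} in this theorem but already encapsulated in Propositions~\ref{prop:hessians} and~\ref{prop:smoothness}: to obtain global linear convergence of a first-order method on a genuinely nonconvex-nonconcave problem we need the envelope to be genuinely strongly convex-strongly concave and smooth on all of $\RR^n\times\RR^m$, and this is exactly what two-sided interaction dominance (together with $\eta>\rho$) delivers through the envelope's Hessian formula. Given those structural results, the present theorem is bookkeeping around the known GDA contraction; the only subtlety is correctly turning the two-case smoothness bound into the factor $1/\min\{1,(\eta/\rho-1)^{2}\}$ rather than losing the tighter branch.
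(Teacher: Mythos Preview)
Your proposal is correct and follows essentially the same approach as the paper: reduce damped PPM on $L$ to GDA on $L_\eta$ via Corollary~\ref{cor:PPM-to-GDA}, invoke Propositions~\ref{prop:hessians} and~\ref{prop:smoothness} to obtain $\mu=(\eta^{-1}+\alpha^{-1})^{-1}$ strong convexity--concavity and $\beta=\max\{\eta,|\eta^{-1}-\rho^{-1}|^{-1}\}$ smoothness, use Corollary~\ref{cor:stationarity} for uniqueness of the stationary point, and then apply Theorem~\ref{thm:standard-contraction} with $s=\lambda/\eta$. Your algebraic handling of $\eta^{2}/\beta^{2}=\min\{1,(\eta/\rho-1)^{2}\}$ and the specialization at $\eta=2\rho$ match the paper's computations exactly.
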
 
\begin{remark}
	Theorem \ref{thm:two-sided-dominance} is valid even if $\alpha>0$-interaction dominance only holds locally. That is, as long as $\alpha$-interaction dominance holds within an $l_2$-ball around a local stationary point, and the initial point is sufficiently within this ball, then PPM converges linearly to this local stationary point. 
\end{remark}
\begin{remark}
	For $\mu$-strongly convex-strongly concave problems, this theorem recovers the standard proximal point convergence rate for any choice of $\eta>0$. In this case, we have $\rho=-\mu$, $\alpha=\mu$, and can set $\lambda=\frac{1}{\eta/\mu+1}$, giving a $O(\eta^2/\mu^2 \log(1/\varepsilon))$ convergence rate matching~\cite{rockafellar1976monotone}.
\end{remark}
\begin{remark}
	The $\alpha>0$-interaction dominance condition is tight for obtaining global linear convergence. A nonconvex-nonconcave quadratic example illustrating the sharpness of this boundary is presented in Section~\ref{subsec:tight}. Moreover, our example shows that it is sometimes necessary to utilize the damping parameter (that is, selecting $\lambda<1$) for PPM to converge.
\end{remark}

If we only have $\alpha>0$-interaction dominance with $y$, then the saddle envelope $L_\eta$ is still much more structured than the original objective $L$. In this case, $L_\eta(x,y)$ may still be nonconvex in $x$, but Proposition~\ref{prop:smoothness} ensures it is strongly concave in $y$. Then our theory allows us to extend existing convergence guarantees for nonconvex-concave problems to this larger class of $y$ interaction dominant problems. For example, Lin et al.~\cite{Jordan1906} recently showed that GDA with different, carefully chosen stepsize parameters for $x$ and $y$ will converge to a stationary point at a rate of $O(\varepsilon^{-2})$. We find that running the following damped proximal point method is equivalent to running their variant of GDA on the saddle envelope
\begin{equation}\label{eq:saddle-PPM-2variables}
\begin{bmatrix} x_{k+1} \\ y_{k+1} \end{bmatrix} = \begin{bmatrix} \lambda x_{k}^+ +(1-\lambda)x_k \\ \gamma y_{k}^+ +(1-\gamma)y_k \end{bmatrix} \text{ where } \begin{bmatrix} x_{k}^+ \\ y_{k}^+ \end{bmatrix} = \prox_\eta(x_k,y_k)
\end{equation}
for proper choice of the parameters $\lambda,\gamma \in [0,1]$.
From this, we derive the following sublinear convergence rate for nonconvex-nonconcave problems whenever $y$ interaction dominance holds, proven at the end of the section.
\begin{theorem}\label{thm:one-sided-dominance}
	For any objective $L$ that is $\rho$-weakly convex-weakly concave and $\alpha > 0$-interaction dominant in $y$, consider the PPM variant~\eqref{eq:saddle-PPM-2variables} with damping constants $\lambda=\Theta\left(\frac{\min\left\{1,|\eta/\rho-1|^3\right\}}{(1+\eta/\alpha)^2}\right)$ and $\gamma=\Theta\left(\min\left\{1,|\eta/\rho-1|\right\}\right)$. If the sequence $y_k$ is bounded\footnote{We do not believe this boundedness condition is fundamentally needed, but we make it to leverage the results of~\cite{Jordan1906} which utilize compactness.}, then a stationary point $\|\nabla L(x_{T}^+,y_{T}^+)\|\leq \varepsilon$ will be found by iteration $T\leq O\left(\varepsilon^{-2}\right)$.
\end{theorem}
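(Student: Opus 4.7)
The plan is to reduce this nonconvex-nonconcave problem to a nonconvex-strongly-concave minimax problem on the saddle envelope $L_\eta$, where existing results of Lin et al.~\cite{Jordan1906} directly apply. The interaction dominance in $y$ is the key hypothesis: by Proposition~\ref{prop:hessians} it makes $L_\eta(x,\cdot)$ strongly concave, even though $L_\eta(\cdot,y)$ can remain nonconvex. Proposition~\ref{prop:smoothness} already ensures $L_\eta$ is globally smooth. Corollary~\ref{cor:stationarity} (together with Lemma~\ref{lem:grad-formula}) then lets us translate stationary points of $L_\eta$ back to stationary points of $L$ at zero overhead.

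First I would establish the algorithmic equivalence between the update rule~\eqref{eq:saddle-PPM-2variables} and a two-timescale GDA on $L_\eta$. By the same computation as in Corollary~\ref{cor:PPM-to-GDA}, Lemma~\ref{lem:grad-formula} gives $\nabla_x L_\eta(z_k) = \eta(x_k - x_k^+)$ and $-\nabla_y L_\eta(z_k) = -\eta(y_k^+ - y_k)$, so the damped PPM step is
\begin{align*}
x_{k+1} = x_k - (\lambda/\eta)\,\nabla_x L_\eta(z_k), \qquad y_{k+1} = y_k + (\gamma/\eta)\,\nabla_y L_\eta(z_k),
\end{align*}
which is precisely GDA on $L_\eta$ with primal step $s_x = \lambda/\eta$ and dual step $s_y = \gamma/\eta$.

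Next I would collect the structural properties of $L_\eta$ needed to invoke~\cite{Jordan1906}. Proposition~\ref{prop:hessians} applied to $y$-interaction dominance yields $(\eta^{-1}+\alpha^{-1})^{-1}$-strong concavity of $L_\eta$ in $y$, and Proposition~\ref{prop:smoothness} gives a uniform gradient-Lipschitz constant $\ell := \max\{\eta, |\eta^{-1}-\rho^{-1}|^{-1}\}$. Thus on $L_\eta$ we have a nonconvex-strongly-concave minimax with condition number $\kappa := \ell/(\eta^{-1}+\alpha^{-1})^{-1} = \Theta\bigl((1+\eta/\alpha)\cdot\max\{1,|\eta/\rho-1|^{-1}\}\bigr)$. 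Lin et al.'s two-timescale GDA analysis then prescribes a primal step of order $1/(\ell\kappa^2)$ and a dual step of order $1/\ell$, which, after multiplying by $\eta$, translates exactly into the stated choices $\lambda = \Theta\bigl(\min\{1,|\eta/\rho-1|^3\}/(1+\eta/\alpha)^2\bigr)$ and $\gamma = \Theta(\min\{1,|\eta/\rho-1|\})$ (the extra factor $\min\{1,|\eta/\rho-1|\}$ in $\gamma$ and its cube in $\lambda$ come from the $\eta/\ell$ ratio). Their theorem guarantees an iterate with $\|\nabla L_\eta(x_k,y_k)\| \le \varepsilon$ after $O(\varepsilon^{-2})$ iterations. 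By Lemma~\ref{lem:grad-formula}, $\nabla L_\eta(x_k,y_k) = \nabla L(x_k^+,y_k^+)$, so this is exactly the desired stationarity condition on $L$ at the proximal iterate.

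The principal obstacle is the careful parameter bookkeeping when porting~\cite{Jordan1906} through the $\lambda,\gamma,\eta$ parameterization of damped PPM: their rate is usually stated in terms of condition number $\kappa$ and smoothness $\ell$ of a generic nonconvex-strongly-concave surrogate, so one must verify the exponents of $(1+\eta/\alpha)$ and of $\min\{1,|\eta/\rho-1|\}$ line up correctly. A secondary subtlety, acknowledged in the footnote, is the boundedness assumption on $\{y_k\}$: Lin et al.'s argument constructs a Moreau envelope of $\max_y L_\eta(\cdot,y)$ over a compact $y$-domain, and one needs the boundedness hypothesis (or a coercivity assumption on $L$ in $y$) to invoke their result in our unconstrained setting; once the iterates are confined to a compact set, their argument ports verbatim.
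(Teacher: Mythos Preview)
Your proposal is correct and follows essentially the same approach as the paper: reduce to a nonconvex--strongly-concave problem on the saddle envelope $L_\eta$ via Propositions~\ref{prop:hessians} and~\ref{prop:smoothness}, identify the damped PPM~\eqref{eq:saddle-PPM-2variables} with two-timescale GDA on $L_\eta$ through Lemma~\ref{lem:grad-formula}, invoke Lin et al.'s $O(\varepsilon^{-2})$ rate with the boundedness assumption handling their compactness requirement, and then translate the stationarity guarantee back to $L$ via $\nabla L_\eta(z_k)=\nabla L(z_k^+)$. Your parameter bookkeeping (computing $\kappa$ and matching the $\lambda,\gamma$ exponents) is in fact somewhat more explicit than the paper's own presentation.
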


\begin{remark}
	Symmetrically, we can guarantee sublinear convergence assuming only $x$-interaction dominance. Considering the problem of $\max_y\min_x L(x,y) = -\min_y\max_x-L(x,y)$, which is now interaction dominant with respect to the inner maximization variable, we can apply Theorem~\ref{thm:one-sided-dominance}. This reduction works since although the original minimax problem and this maximin problem need not have the same solutions, they always have the same stationary points.
\end{remark}	

\subsection{Proof of Theorem~\ref{thm:two-sided-dominance}}
Propositions~\ref{prop:hessians} and~\ref{prop:smoothness} show that $L_\eta$ is $\mu=(\eta^{-1}+\alpha^{-1})^{-1}$-strongly convex-strongly concave and has a $\beta=\max\{\eta, |\eta^{-1}-\rho^{-1}|^{-1}\}$-Lipschitz gradient. Having strong convexity and strong concavity ensures $L_\eta$ has a unique stationary point $(x^*,y^*)$, which in turn must be the unique stationary point of $L$ by Corollary~\ref{cor:stationarity}. 
Recall Corollary~\ref{cor:PPM-to-GDA} showed that the damped PPM~\eqref{eq:saddle-PPM} on $L$ is equivalent to GDA~\eqref{eq:GDA} with $s=\lambda/\eta$ on $L_\eta$. Then provided 
$$ \lambda \leq 2\frac{\min\left\{1,(\eta/\rho-1)^2\right\}}{\eta/\alpha+1} = \frac{2(\eta^{-1}+\alpha^{-1})^{-1}}{\max\{\eta^2, (\eta^{-1}-\rho^{-1})^{-2}\}}\ , $$
we have $s=\lambda/\eta\in(0,2\mu/\beta^2)$. Hence applying Theorem~\ref{thm:standard-contraction} shows the iterations of GDA (and consequently PPM) linearly converge to this unique stationary point as
\begin{align*}
\left\|\begin{bmatrix} x_k-x^* \\ y_k-y^* \end{bmatrix}\right\|^2 &\leq \left(1 - \frac{2\lambda}{\eta/\alpha+1} + \frac{\lambda^2}{\min\left\{1,(\eta/\rho-1)^2\right\}}\right)^{k}\left\|\begin{bmatrix} x_0-x^* \\ y_0-y^* \end{bmatrix}\right\|^2.
\end{align*}

\subsection{Proof of Theorem~\ref{thm:one-sided-dominance}}
Proposition~\ref{prop:hessians} shows that whenever interaction dominance holds for $y$ the saddle envelope is $\mu=(\eta^{-1}+\alpha^{-1})^{-1}$-strongly concave in $y$ and Proposition~\ref{prop:smoothness} ensures the saddle envelope has a $\beta=\max\{\eta, |\eta^{-1}-\rho^{-1}|^{-1}\}$-Lipschitz gradient. 
Recently, Lin et al.~\cite{Jordan1906} considered such nonconvex-strongly concave problems with a compact constraint $y\in D$. They analyzed the following variant of GDA
\begin{equation}\label{eq:GDA-2sizes}
\begin{bmatrix} x_{k+1} \\ y_{k+1} \end{bmatrix} = \mathrm{proj}_{\RR^n\times D}\left(\begin{bmatrix} x_{k} \\ y_{k} \end{bmatrix} + \begin{bmatrix} -\nabla_x L(x_k,y_k)/\eta_x \\ \nabla_y L(x_k,y_k)/\eta_y \end{bmatrix}\right)
\end{equation}
which projects onto the feasible region $\RR^n\times D$ each iteration and has different stepsize parameters $\eta_x$ and $\eta_y$ for $x$ and $y$. Lin et al.\ prove the following theorem showing a sublinear guarantee.
\begin{theorem}[Theorem 4.4 of~\cite{Jordan1906}]
	For any $\beta$-smooth, nonconvex-$\mu$-strongly concave $L$, let $\kappa=\beta/\mu$ be the condition number for $y$. Then for any $\varepsilon>0$, GDA with stepsizes $\eta^{-1}_x=\Theta(1/\kappa^2\beta)$ and $\eta^{-1}_y=\Theta(1/\beta)$ will find a point satisfying $\|\nabla L(x_T,y_T)\|\leq \varepsilon$ by iteration
	$$ T\leq O\left(\frac{\kappa^2\beta + \kappa \beta^2}{\varepsilon^2}\right).$$
\end{theorem}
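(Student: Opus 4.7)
The plan is to follow the two-timescale analysis of Lin, Jin, and Jordan, which reduces the nonconvex-strongly-concave minimax problem to nonconvex smooth minimization of the envelope $\Phi(x) := \max_y L(x,y)$, modulo a tractable inner tracking error.

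First, I would set up the machinery around $\Phi$. Because $L$ is $\mu$-strongly concave in $y$ for every fixed $x$, the maximizer $y^*(x) := \mathrm{argmax}_y L(x,y)$ is unique, so $\Phi$ is well defined. Danskin's theorem gives $\nabla \Phi(x) = \nabla_x L(x, y^*(x))$. Differentiating the optimality condition $\nabla_y L(x, y^*(x)) = 0$ and using $-\nabla^2_{yy} L \succeq \mu I$ together with $\|\nabla^2_{yx} L\| \leq \beta$ yields that $y^*$ is $\kappa$-Lipschitz with $\kappa = \beta/\mu$. A chain-rule computation then shows $\Phi$ is $L_\Phi$-smooth with $L_\Phi = O(\kappa \beta)$.

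Second, I would introduce the tracking error $\delta_k := \|y_k - y^*(x_k)\|$ and analyze how it evolves under the two-timescale GDA update~\eqref{eq:GDA-2sizes}. A single $y$-ascent step with stepsize $1/\eta_y = \Theta(1/\beta)$ on the $\mu$-strongly concave function $L(x_k, \cdot)$ contracts the distance to $y^*(x_k)$ by a factor $\sqrt{1-\Theta(1/\kappa)}$. The $x$-update in the meantime shifts the target by $\|y^*(x_{k+1}) - y^*(x_k)\| \leq \kappa\|x_{k+1}-x_k\| \leq (\kappa/\eta_x)\|\nabla_x L(x_k,y_k)\|$. Combining these gives a one-step recursion of the form
\begin{equation*}
\delta_{k+1}^2 \leq (1 - \Theta(1/\kappa))\delta_k^2 + O(\kappa/\eta_x^2)\|\nabla_x L(x_k,y_k)\|^2 \ .
\end{equation*}
Because $\eta_x^{-1} = \Theta(1/(\kappa^2\beta))$ is much smaller than $\eta_y^{-1} = \Theta(1/\beta)$, the drift term is small relative to the contraction.

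Third, I would carry out the descent analysis for $\Phi$ along the iterates. By $L_\Phi$-smoothness of $\Phi$,
\begin{equation*}
\Phi(x_{k+1}) \leq \Phi(x_k) - \tfrac{1}{\eta_x}\nabla\Phi(x_k)^T\nabla_x L(x_k,y_k) + \tfrac{L_\Phi}{2\eta_x^2}\|\nabla_x L(x_k,y_k)\|^2 \ .
\end{equation*}
Writing $\nabla_x L(x_k,y_k) = \nabla\Phi(x_k) + (\nabla_x L(x_k,y_k) - \nabla_x L(x_k, y^*(x_k)))$ and bounding the second piece by $\beta\delta_k$ yields, after Young's inequality, an upper bound of the form $\Phi(x_{k+1}) \leq \Phi(x_k) - \Omega(\|\nabla\Phi(x_k)\|^2/(\kappa^2\beta)) + O(\beta\delta_k^2/\kappa)$. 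The natural potential is then $\Psi_k := \Phi(x_k) + C\,\delta_k^2$ for a constant $C = \Theta(\beta)$ chosen so that the contractive part of the $\delta_k^2$ recursion absorbs the extra error, and so that $C$ times the drift coefficient $O(\kappa/\eta_x^2)$ is still dominated by the $\|\nabla\Phi(x_k)\|^2/(\kappa^2\beta)$ descent term. The hardest part will be pinning down the constants so these three competing terms balance; this is where the specific scalings $\eta_x^{-1} = \Theta(1/(\kappa^2\beta))$ and $\eta_y^{-1} = \Theta(1/\beta)$ are forced.

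Finally, summing the per-step inequality $\Psi_k - \Psi_{k+1} \geq \Omega(\|\nabla\Phi(x_k)\|^2/(\kappa^2\beta))$ over $k=0,\dots,T-1$ gives $\min_{k<T}\|\nabla\Phi(x_k)\|^2 \leq O(\kappa^2\beta(\Psi_0-\Psi_*)/T)$, and the boundedness of $y$ on the compact set $D$ controls $\Psi_0 - \Psi_*$. To translate stationarity of $\Phi$ at some index $T$ into stationarity of $L$ at $(x_T,y_T)$, I would use $\nabla_y L(x_T,y_T) = \nabla_y L(x_T,y_T) - \nabla_y L(x_T,y^*(x_T))$, which is bounded by $\beta\delta_T$, and similarly replace $\nabla_x L(x_T,y_T)$ by $\nabla\Phi(x_T)$ at the cost of $\beta\delta_T$. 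Controlling $\delta_T$ directly from the potential (since $C\delta_T^2 \leq \Psi_0 - \Psi_*$ plus the accumulated descent budget) and choosing $T = O((\kappa^2\beta + \kappa\beta^2)/\varepsilon^2)$ then produces the claimed $\varepsilon$-stationary point of $L$.
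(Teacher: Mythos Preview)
The paper does not actually prove this statement: it is quoted verbatim as Theorem~4.4 of Lin, Jin, and Jordan~\cite{Jordan1906} and used as a black box in the proof of Theorem~\ref{thm:one-sided-dominance}. There is therefore no ``paper's own proof'' to compare against; the paper's entire contribution around this statement is to observe that the damped PPM variant~\eqref{eq:saddle-PPM-2variables} on $L$ is equivalent to the two-stepsize GDA~\eqref{eq:GDA-2sizes} on the saddle envelope $L_\eta$, and then to invoke this cited theorem on $L_\eta$.

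Your proposal is a faithful outline of the original Lin--Jin--Jordan argument: the reduction to the smooth nonconvex function $\Phi(x)=\max_y L(x,y)$, the $\kappa$-Lipschitzness of $y^*(\cdot)$ and $O(\kappa\beta)$-smoothness of $\Phi$, the tracking-error recursion for $\delta_k$, and the coupled potential $\Psi_k=\Phi(x_k)+C\delta_k^2$ are exactly the ingredients of that paper. The one place where your sketch is a bit loose is the final step: you want $\|\nabla L(x_T,y_T)\|\le\varepsilon$ at a \emph{specific} iterate, which requires controlling $\delta_T$ (not just $\min_k\|\nabla\Phi(x_k)\|$), and the potential argument as written only gives a bound on the \emph{minimum} over $k$. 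In the original proof this is handled by showing that the tracking error $\delta_k$ itself becomes small after a burn-in phase (or by averaging an inequality that also involves $\delta_k^2$), so you would need to be a bit more careful there; but this is a matter of bookkeeping rather than a missing idea.
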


Assuming that the sequence $y_k$ above stays bounded, this projected gradient method is equivalent to running GDA on our unconstrained problem by setting the domain of $y$ as a sufficiently large compact set to contain all the iterates. 
Consider setting the averaging parameters as $\lambda=\Theta(\eta/\kappa^2\beta)=\Theta\left(\frac{\min\left\{1,|\eta/\rho-1|^3\right\}}{(1+\eta/\alpha)^2}\right)$ and $\gamma=\Theta(\eta/\beta)=\Theta\left(\min\left\{1,|\eta/\rho-1|\right\}\right)$. Then using the gradient formula from Lemma~\ref{lem:grad-formula}, we see that the damped proximal point method~\eqref{eq:saddle-PPM-2variables} is equivalent to running GDA on the saddle envelope with $\eta_x=\eta/\lambda$ and $\eta_y=\eta/\gamma$:
\begin{align*}
\begin{bmatrix} x_{k+1} \\ y_{k+1} \end{bmatrix} = \begin{bmatrix} x_{k} \\ y_{k}\end{bmatrix} + \begin{bmatrix} -\nabla_xL_\eta(x_k,y_k)/\eta_x \\ \nabla_yL_\eta(x_k,y_k)/\eta_y\end{bmatrix}
&= \begin{bmatrix} \lambda x_{k}^+ +(1-\lambda)x_k \\ \gamma y_{k}^+ +(1-\gamma)y_k \end{bmatrix}. 
\end{align*}

Then the above theorem guarantees that running this variant of the proximal point method on $L$ (or equivalently, applying the GDA variant~\eqref{eq:GDA-2sizes} to the saddle envelope) will converge to a stationary point with $\|\nabla L_\eta(z_T)\| \leq \varepsilon$ within $T\leq O(\varepsilon^{-2})$ iterations. 
It immediate follows from the gradient formula that $z_T^+ = \prox_\eta(z_T)$ is approximately stationary for $L$ as $\|\nabla L(z_T^+)\|=\|\nabla L_\eta(z_T)\| \leq\varepsilon$.

\section{Interaction Weak Regime}\label{sec:interaction-weak}

Our previous theory showed that when the interaction between $x$ and $y$ is sufficiently strong, global linear convergence occurs. Now we consider when there is limited interaction between $x$ and $y$. At the extreme of having no interaction, nonconvex-nonconcave minimax optimization separates into nonconvex minimization and nonconcave maximization. On these separate problems, local convergence of the proximal point method is well-understood.
Here we show that under reasonable smoothness and initialization assumptions, this local convergence behavior extends to minimax problems with weak, but nonzero, interaction between $x$ and $y$.

To formalize this, we make the following regularity assumptions
\begin{align}
\|\nabla^2 L(z)\| \leq \beta &\ , \ \text{ for all } z\in\RR^n\times\RR^m \label{eq:bounded-hessian}\\ 
\|\nabla^2 L(z) - \nabla^2 L(\bar z)\| \leq H\|z-\bar z\| &\ , \ \text{ for all } z, \bar z\in\RR^n\times\RR^m \label{eq:lip-hessian}
\end{align}
and quantify how weak the interaction is by assuming
\begin{align}
\|\nabla_{xy}^2 L(z)\| \leq \delta \quad & , \ \text{ for all } z\in\RR^n\times\RR^m  \label{eq:interaction-bound} \\ 
\begin{cases} \|\nabla^2_{xx} L(x,y) - \nabla^2_{xx} L(x,\bar y)\| \leq \xi\|y-\bar y\| \\ \|\nabla^2_{yy} L(x,y) - \nabla^2_{yy} L(\bar x,y)\| \leq \xi\|x-\bar x\| \end{cases}& , \ \text{ for all } (x,y), (\bar x, \bar y)\in\RR^n\times\RR^m \  \label{eq:interaction-lip}
\end{align}
for some constants $\beta, H, \delta, \xi \geq 0$.
Here we are particularly interested in problems where $\delta$ and $\xi$ are sufficiently small. For example, the bilinear setting of~\eqref{eq:bilinear} satisfies this with $(\delta, \xi) = (\lambda_{max}(A), 0)$ and so we are considering small interaction matrices $A$.

For such problems, we consider an initialization for the proximal point method based on our motivating intuition that when there is no interaction, we can find local minimizers and maximizers with respect to $x$ and $y$. For a fixed point $z'=(x',y')$, we compute our PPM initialization $z_0=(x_0,y_0)$ as
\begin{align}\label{eq:initialization}
\begin{cases}
x_0 = \text{a local minimizer of } \min_u L(u,y')\ , \\
y_0 = \text{a local maximizer of } \max_v L(x',v)\ .
\end{cases}
\end{align}
These subproblems amount to smooth nonconvex minimization, which is well-studied (see for example \cite{lee2016gradient}), and so we take them as a blackbox.

The critical observation explaining why this is a good initialization is that provided $\delta$ and $\xi$ are small enough, we have (i) that the interaction dominance conditions~\eqref{eq:x-dominance} and~\eqref{eq:y-dominance} hold at $z_0$ with a nearly positive $\alpha=\alpha_0$, often with $\alpha_0>0$ and (ii) that $z_0$ is a nearly stationary point of $L$. Below we formalize each of these properties and arrive at conditions quantifying how small we need $\xi$ and $\delta$ to be for our local convergence theory to apply.
\begin{description}
	\item[(i)] First, we observe that the interaction dominance conditions~\eqref{eq:x-dominance} and~\eqref{eq:y-dominance} hold at $z_0$ with a nearly positive coefficient $\alpha_0$. Since $x_0$ and $y_0$ are local optimum, for some $\mu\geq 0$, we must have
	$$ \nabla^2_{xx} L(x_0,y') \succeq \mu I \ \ \ \text{and} \ \ \ -\nabla^2_{yy} L(x',y_0) \succeq \mu I \ . $$
	Then the Hessians at $z_0$ must be similarly bounded since the amount they can change is limited by~\eqref{eq:interaction-lip}. Hence
	$$ \nabla^2_{xx} L(z_0) \succeq (\mu-\xi\|y_0-y'\|)I \ \ \ \text{and} \ \ \ -\nabla^2_{yy} L(z_0)  \succeq (\mu-\xi\|x_0-x'\|)I \ .$$
	Adding a positive semidefinite term onto these (as is done in the definition of interaction dominance) can only increase the righthand-side above. In particular, we can bound the second term added in the interaction dominance conditions~\eqref{eq:x-dominance} and~\eqref{eq:y-dominance} as
	\begin{align*}
	\nabla^2_{xy} L(z_0)(\eta I -\nabla^2_{yy} L(z_0))^{-1}\nabla^2_{yx} L(z_0) &\succeq  \frac{\nabla^2_{xy} L(z_0)\nabla^2_{yx} L(z_0)}{\eta+\beta} \\
	& \succeq  \frac{\lambda_{min}(\nabla^2_{xy} L(z_0)\nabla^2_{yx} L(z_0))}{\eta+\beta} I \geq 0 ,\\
	\nabla^2_{yx} L(z_0)(\eta I +\nabla^2_{xx} L(z_0))^{-1}\nabla^2_{xy} L(z_0) &\succeq  \frac{\nabla^2_{yx} L(z_0)\nabla^2_{xy} L(z_0)}{\eta+\beta} \\
	& \succeq \frac{\lambda_{min}(\nabla^2_{yx} L(z_0)\nabla^2_{xy} L(z_0))}{\eta+\beta} I \geq 0 .
	\end{align*}	    
	Hence interaction dominance holds at $z_0$ in both $x$ and $y$ with 
	\begin{align*}
	\nabla^2_{xx} L(z_0)& + \nabla^2_{xy} L(z_0)(\eta I -\nabla^2_{yy} L(z_0))^{-1}\nabla^2_{yx} L(z_0)\\ &\ \ \succeq \left(\mu+ \frac{\lambda_{min}(\nabla^2_{xy} L(z_0)\nabla^2_{yx} L(z_0))}{\eta+\beta}-\xi\|y_0-y'\|\right) I \ , \\
	-\nabla^2_{yy} L(z_0)& + \nabla^2_{yx} L(z_0)(\eta I +\nabla^2_{xx} L(z_0))^{-1}\nabla^2_{xy} L(z_0)\\ &\ \ \succeq \left(\mu+ \frac{\lambda_{min}(\nabla^2_{yx} L(z_0)\nabla^2_{xy} L(z_0))}{\eta+\beta}-\xi\|x_0-x'\|\right) I \ .
	\end{align*}
	For our local linear convergence theory to apply, we need this to hold with positive coefficient. It suffices to have $\xi$ sufficiently small, satisfying
	\begin{equation} \label{eq:psd-bound}
	\begin{cases}
	\xi\|y_0-y'\| < \mu+\dfrac{\lambda_{min}(\nabla^2_{xy} L(z_0)\nabla^2_{yx} L(z_0))}{\eta+\beta} \\
	\xi\|x_0-x'\| < \mu+\dfrac{\lambda_{min}(\nabla^2_{yx} L(z_0)\nabla^2_{xy} L(z_0))}{\eta+\beta} \ .
	\end{cases}
	\end{equation}
	Note this is trivially the case for problems with bilinear interaction~\eqref{eq:bilinear} as $\xi=0$.
	It is also worth noting that even if $\mu=0$, the right-hand-sides above are still strictly positive if $\nabla_{xy} L(z_0)$ is full rank and the variable dimensions $n$ and $m$ of $x$ and $y$ are equal\footnote{This works since having full rank square $\nabla^2_{xy} L(z_0)$ implies that both of its squares $\nabla^2_{xy} L(z_0)\nabla^2_{yx} L(z_0)$ and $\nabla^2_{yx} L(z_0)\nabla^2_{xy} L(z_0)$ are full rank as well. Hence these squares must be strictly positive definite and as a result, have strictly positive minimum eigenvalues.}.
	
	\item[(ii)] Next, we observe that $z_0$ is nearly stationary by applying~\eqref{eq:interaction-bound} and using the first-order optimality conditions of the subproblems~\eqref{eq:initialization}:
	\begin{align*}
	\|\nabla L(z_0)\| \leq \left\|\begin{bmatrix} \nabla_{x} L(x_0, y') \\ \nabla_{y} L(x',y_0) \end{bmatrix} \right\| + \delta\|z_0-z'\| = \delta\|z_0-z'\|.  
	\end{align*}
	For our convergence theory, this gradient needs to be sufficiently small
	\begin{equation} \label{eq:local-bound}
	\delta\|z_0-z'\| \leq \frac{\alpha_0(\eta-\rho)}{ 2\left(1+\frac{4\sqrt{2}(\eta + \alpha_0/2)}{\alpha_0}+\frac{4\sqrt{2}\beta(\eta + \alpha_0/2)}{\alpha_0(\eta-\rho)}\right)H\left(1+\frac{2\delta}{\eta-\rho}+\frac{\delta^2}{(\eta-\rho)^2}\right)}.
	\end{equation}
\end{description}

Under these conditions, we have the following linear convergence guarantee.
\begin{theorem}\label{thm:interaction-weak-convergence}
	For any objective $L$ satisfying weak convexity-concavity~\eqref{eq:weak}, the smoothness conditions~\eqref{eq:bounded-hessian} and \eqref{eq:lip-hessian}, and the interaction bounds~\eqref{eq:interaction-bound} and~\eqref{eq:interaction-lip}, consider the damped PPM~\eqref{eq:saddle-PPM}  with initialization $(x_0,y_0)$ given by~\eqref{eq:initialization} and $\eta$ and $\lambda$ satisfying
	$$ \lambda \leq 2\frac{\min\left\{1,(\eta/\rho-1)^2\right\}}{2\eta/\alpha_0+1} \ .$$
	Then PPM linearly converges to a nearby stationary point $(x^*,y^*)$ of~\eqref{eq:main-problem} with 
	$$\left\|\begin{bmatrix} x_k-x^* \\ y_k-y^* \end{bmatrix}\right\|^2 \leq \left(1 - \frac{2\lambda}{2\eta/\alpha_0+1} + \frac{\lambda^2}{\min\left\{1,(\eta/\rho-1)^2\right\}}\right)^{k}\left\|\begin{bmatrix} x_0-x^* \\ y_0-y^* \end{bmatrix}\right\|^2$$
	provided $\delta$ and $\xi$ are small enough to satisfy~\eqref{eq:psd-bound} and~\eqref{eq:local-bound}.
\end{theorem}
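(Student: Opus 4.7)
The plan is to reduce this theorem to a local application of Theorem~\ref{thm:two-sided-dominance}. Observations (i) and (ii) preceding the statement already establish the essential properties at $z_0$: interaction dominance with parameter $\alpha_0 > 0$, and $\|\nabla L(z_0)\| \leq \delta \|z_0 - z'\|$. What remains is to propagate both properties into an explicit ball around $z_0$, certify that a nearby stationary point exists, and appeal to the local convergence remark following Theorem~\ref{thm:two-sided-dominance}.

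The first step is to show that the matrix-valued map $z \mapsto \nabla^2_{xx} L(z) + \nabla^2_{xy} L(z)(\eta I - \nabla^2_{yy} L(z))^{-1} \nabla^2_{yx} L(z)$ (and its $y$-analogue) is Lipschitz in $z$. Using the $H$-Lipschitz Hessian~\eqref{eq:lip-hessian}, the interaction bound $\|\nabla^2_{xy} L\| \leq \delta$, the weak concavity estimate $\|(\eta I - \nabla^2_{yy} L)^{-1}\| \leq 1/(\eta - \rho)$, and the differentiation identity $d M^{-1} = -M^{-1}(dM) M^{-1}$, one obtains a Lipschitz constant of $H(1 + \delta/(\eta-\rho))^2 = H(1 + 2\delta/(\eta-\rho) + \delta^2/(\eta-\rho)^2)$, which is precisely the last factor in the denominator of~\eqref{eq:local-bound}. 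Hence interaction dominance holds with coefficient at least $\alpha_0/2$ throughout the ball $B(z_0, r^*)$ for $r^* = \alpha_0/(2 H (1 + \delta/(\eta-\rho))^2)$. Proposition~\ref{prop:hessians} then makes $L_\eta$ strongly convex-strongly concave with modulus $\mu_0 = (\eta^{-1} + 2/\alpha_0)^{-1}$ on the corresponding region, accounting for the fact that the saddle envelope Hessian at $z$ invokes $L$'s Hessian at $z_+ = \prox_\eta(z)$, whose distance from $z$ is controlled by $\|z_+ - z\| = \|\nabla L_\eta(z)\|/\eta$ via Lemma~\ref{lem:grad-formula}.

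The second step is to verify that a stationary point $z^*$ lies inside $B(z_0, r^*/2)$ and that the PPM iterates stay in $B(z_0, r^*)$. Applying Lemma~\ref{lem:helper0} to the strongly convex-strongly concave $L_\eta$ gives $\|z_0 - z^*\| \leq \|\nabla L_\eta(z_0)\|/\mu_0$. By Lemma~\ref{lem:grad-formula}, $\|\nabla L_\eta(z_0)\| = \|\nabla L(z_0^+)\|$, which is controlled in terms of $\|\nabla L(z_0)\|$ via $\beta$-smoothness of $L$ and the proximal step bound $\|z_0^+ - z_0\| \leq \|\nabla L(z_0)\|/(\eta - \rho)$ (itself another application of Lemma~\ref{lem:helper0} to the $(\eta - \rho)$-strongly convex-strongly concave subproblem defining $\prox_\eta$). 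Substituting $\|\nabla L(z_0)\| \leq \delta\|z_0 - z'\|$ from observation (ii) and imposing that $2\|z_0 - z^*\|$ sits safely inside $r^*$ recovers~\eqref{eq:local-bound}, including the factor $(\eta + \alpha_0/2)/\alpha_0$ (from the envelope's condition number and the radius requirement $r \geq 2\|\nabla L_\eta(z_0)\|/\mu_0$ in Theorem~\ref{thm:standard-contraction}) and the $\beta$ term (from transferring gradient norms from $L$ to $L_\eta$). Once the ball is certified invariant, Corollary~\ref{cor:PPM-to-GDA} turns PPM on $L$ into GDA on $L_\eta$, and the proof of Theorem~\ref{thm:two-sided-dominance} applies verbatim with $\alpha$ replaced by $\alpha_0/2$, yielding the claimed rate.

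I expect the main obstacle to be the careful bookkeeping in the second step, specifically ensuring simultaneously that (a) the ball where interaction dominance holds with coefficient $\alpha_0/2$ is large enough to contain $z^*$ and all subsequent GDA-on-$L_\eta$ iterates, and (b) the various translations between $\|\nabla L\|$, $\|\nabla L_\eta\|$, $\|z_0 - z_0^+\|$, and $\|z_0 - z^*\|$ compound to give exactly the constant in~\eqref{eq:local-bound}. Chaining the estimates in the correct order is tedious but mechanical, and once completed, the local-version remark of Theorem~\ref{thm:two-sided-dominance} delivers both the existence of a nearby stationary point and the stated linear contraction.
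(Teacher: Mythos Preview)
Your proposal is correct and follows essentially the same route as the paper: Lipschitz continuity of the interaction-dominance map (with exactly the constant $H(1+\delta/(\eta-\rho))^2$ you compute) propagates $\alpha_0$-dominance at $z_0$ to $\alpha_0/2$-dominance on a neighborhood, Propositions~\ref{prop:hessians}--\ref{prop:smoothness} convert this into strong convexity--strong concavity and smoothness of $L_\eta$, and then Corollary~\ref{cor:PPM-to-GDA} plus Theorem~\ref{thm:standard-contraction} finish the job with $\alpha$ replaced by $\alpha_0/2$.

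The one structural point the paper makes sharper than your sketch is the two-region setup you allude to with ``the corresponding region.'' The paper works with an \emph{inner} product set $\Binner=B(x_0,r)\times B(y_0,r)$ (where the iterates live and where $L_\eta$ must be strongly convex--concave) and a strictly larger \emph{outer} ball $\Bouter=B(z_0,R)$ (where $\alpha_0/2$-interaction dominance of $L$ must hold). The link is that for $z\in\Binner$ one shows $z_+=\prox_\eta(z)\in\Bouter$ via $\|z-z_+\|\le \|\nabla L(z)\|/(\eta-\rho)\le(\|\nabla L(z_0)\|+\sqrt 2\beta r)/(\eta-\rho)$, and it is this chain that produces the specific constant in~\eqref{eq:local-bound} (the $1$, the $4\sqrt 2(\eta+\alpha_0/2)/\alpha_0$, and the $\beta$ term each come from one hop in $\|z_0-z_+\|\le\|z_0-z\|+\|z-z_+\|$). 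Your plan already contains all the ingredients for this; making the $\Binner\subset\Bouter$ decomposition explicit is exactly the ``bookkeeping'' you anticipate, and once done the constants line up with~\eqref{eq:local-bound} without further ideas.
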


\subsection{Proof of Theorem~\ref{thm:interaction-weak-convergence}}
Our proof of this local convergence guarantee considers two sets centered at $(x_0,y_0)$: An inner region $\Binner=B(x_0,r)\times B(y_0,r)$ with radius
$$ r := \frac{4(\eta + \alpha_0/2)}{\alpha_0}\frac{\|\nabla L(z_0)\|}{\eta-\rho}$$
and an outer ball $\Bouter=B((x_0,y_0), R)$ with radius 
$$ R := \left(1+\frac{4\sqrt{2}(\eta + \alpha_0/2)}{\alpha_0}+\frac{4\sqrt{2}\beta(\eta + \alpha_0/2)}{\alpha_0(\eta-\rho)}\right)\frac{\|\nabla L(z_0)\|}{\eta-\rho} \geq \sqrt{2}r \ .$$

Thus $\Binner\subseteq\Bouter$. The following lemma shows that the $\alpha_0> 0$-interaction dominance at $z_0$ (following from our initialization procedure) extends to give $\alpha_0/2$-interaction dominance on the whole outer ball $\Bouter$.
\begin{lemma}
	On $\Bouter$, $\alpha_0/2$-iteration dominance holds in both $x$ and $y$.
\end{lemma}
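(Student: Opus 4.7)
The plan is to show that, viewing $A(z) := \nabla^2_{xx} L(z) + \nabla^2_{xy} L(z)(\eta I - \nabla^2_{yy} L(z))^{-1}\nabla^2_{yx} L(z)$ as a function of $z$, this expression is Lipschitz, and the variation over $\Bouter$ is at most $\alpha_0/2$. Since item~(i) of the preceding discussion gives $A(z_0) \succeq \alpha_0 I$, it suffices to bound $\|A(z) - A(z_0)\| \leq \alpha_0/2$ for every $z \in \Bouter$, so that $A(z) \succeq (\alpha_0/2)I$. The $y$-interaction dominance follows by a symmetric argument on the analogous expression $-\nabla^2_{yy} L(z) + \nabla^2_{yx} L(z)(\eta I + \nabla^2_{xx} L(z))^{-1}\nabla^2_{xy} L(z)$.

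First, for any $z \in \Bouter$ we have $\|z - z_0\| \leq R$, so by~\eqref{eq:lip-hessian} each Hessian block of $L$ varies by at most $HR$ in operator norm between $z_0$ and $z$. Weak convexity-concavity~\eqref{eq:weak} gives $\eta I - \nabla^2_{yy} L \succeq (\eta-\rho) I$, hence $\|(\eta I - \nabla^2_{yy} L(\cdot))^{-1}\| \leq 1/(\eta-\rho)$ everywhere, and the identity $X^{-1} - Y^{-1} = X^{-1}(Y - X) Y^{-1}$ yields
$$\left\|(\eta I - \nabla^2_{yy} L(z))^{-1} - (\eta I - \nabla^2_{yy} L(z_0))^{-1}\right\| \leq \frac{HR}{(\eta-\rho)^2}.$$

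Next, writing $B(z) = \nabla^2_{xy} L(z)$ and $C(z) = (\eta I - \nabla^2_{yy} L(z))^{-1}$, I apply the three-term telescope
$$B(z) C(z) B(z)^T - B(z_0) C(z_0) B(z_0)^T = \Delta B\, C(z) B(z)^T + B(z_0)\, \Delta C\, B(z)^T + B(z_0) C(z_0)\, \Delta B^T,$$
with $\Delta B = B(z) - B(z_0)$ and $\Delta C = C(z) - C(z_0)$. Using $\|B\| \leq \delta$ from~\eqref{eq:interaction-bound} together with the norm bounds above, the telescope is bounded in norm by $\frac{2\delta HR}{\eta-\rho} + \frac{\delta^2 HR}{(\eta-\rho)^2}$. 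Adding the direct $HR$ bound on $\nabla^2_{xx} L(z) - \nabla^2_{xx} L(z_0)$ gives
$$\|A(z) - A(z_0)\| \leq HR\left(1 + \frac{2\delta}{\eta-\rho} + \frac{\delta^2}{(\eta-\rho)^2}\right).$$

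Finally, substituting the definition of $R$ and using $\|\nabla L(z_0)\| \leq \delta\|z_0-z'\|$ from item~(ii), the initialization condition~\eqref{eq:local-bound} is calibrated precisely so that this right-hand side is at most $\alpha_0/2$; hence $A(z) \succeq (\alpha_0/2) I$ on $\Bouter$. The parallel argument for $y$-interaction dominance uses $(\eta I + \nabla^2_{xx} L)^{-1}$, whose norm is also bounded by $1/(\eta-\rho)$ via weak convexity, and otherwise is line-for-line identical. The only delicate point is the bookkeeping of constants through the telescoping and the verification that the numerator $\alpha_0(\eta-\rho)$ and denominator factor in~\eqref{eq:local-bound} line up with the coefficients produced here; the remainder is routine operator-norm estimation.
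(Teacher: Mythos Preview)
Your proposal is correct and follows essentially the same approach as the paper: both arguments establish that the interaction-dominance expression is Lipschitz in $z$ with constant $H\left(1+\frac{2\delta}{\eta-\rho}+\frac{\delta^2}{(\eta-\rho)^2}\right)$ via a product-rule/telescoping bound on the factors $\nabla^2_{xy}L$, $(\eta I - \nabla^2_{yy}L)^{-1}$, $\nabla^2_{yx}L$, and then invoke~\eqref{eq:local-bound} (through $\|\nabla L(z_0)\|\le\delta\|z_0-z'\|$ and the definition of $R$) to conclude the variation over $\Bouter$ is at most $\alpha_0/2$. The paper phrases the first step as a uniform Lipschitz constant, whereas you bound $\|A(z)-A(z_0)\|$ directly over the ball, but these are the same estimate.
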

\begin{proof}
	First, observe that the functions defining the interaction dominance conditions~\eqref{eq:x-dominance} and~\eqref{eq:y-dominance}
	\begin{align*}
	\nabla^2_{xx} L(z) + \nabla^2_{xy} L(z)(\eta I -\nabla^2_{yy} L(z))^{-1}\nabla^2_{yx} L(z), \\
	-\nabla^2_{yy} L(z) + \nabla^2_{yx} L(z)(\eta I +\nabla^2_{xx} L(z))^{-1}\nabla^2_{xy} L(z) 
	\end{align*}
	are both uniformly Lipschitz with constant\footnote{This constant follows from multiple applications of the ``product rule''-style formula that $A(z)B(z)$ is uniformly $(a'b + ab')$-Lipschitz provided $A(z)$ is bounded by $a$ and $a'$-Lipschitz and $B(z)$ is bounded by $b$ and $b'$-Lipschitz: any two points $z,z'$ have
		\begin{align*}
		\|A(z)B(z) - A(z')B(z')\| 
		& \leq \|A(z)B(z) - A(z')B(z) \| + \|A(z')B(z)- A(z')B(z')\|\\
		&\leq (a'b + b'a)\|z-z'\| .
		\end{align*}}
	$$ H\left(1+\frac{2\delta}{\eta-\rho}+\frac{\delta^2}{(\eta-\rho)^2}\right). $$
	Then our Lipschitz constant follows by observing the component functions defining it satisfy the following: $\nabla^2_{xx} L(z)$ and $\nabla^2_{yy} L(z)$ are $H$-Lipschitz, $\nabla^2_{xy} L(z)$ and its transpose $\nabla^2_{yx} L(z)$ are both $H$-Lipschitz and bounded in norm by $\delta$, and $(\eta I + \nabla^2_{xx} L(z))^{-1}$ and $(\eta I - \nabla^2_{yy} L(z))^{-1}$ are both $H/(\eta-\rho)^2$-Lipschitz and bounded in norm by $(\eta-\rho)^{-1}$.
	
	It follows that every $z\in\Bouter$ has $\alpha_0/2$-interaction dominance in $x$ as
	\begin{align*}
	& \nabla^2_{xx} L(z) + \nabla^2_{xy} L(z)(\eta I -\nabla^2_{yy} L(z))^{-1}\nabla^2_{yx} L(z)\\
	&\succeq \nabla^2_{xx} L(z_0) + \nabla^2_{xy} L(z_0)(\eta I -\nabla^2_{yy} L(z_0))^{-1}\nabla^2_{yx} L(z_0) - H\left(1+\frac{2\delta}{\eta-\rho}+\frac{\delta^2}{(\eta-\rho)^2}\right)RI\\
	&\succeq \nabla^2_{xx} L(z_0) + \nabla^2_{xy} L(z_0)(\eta I -\nabla^2_{yy} L(z_0))^{-1}\nabla^2_{yx} L(z_0) - \alpha_0/2 I\\
	&\succeq \alpha_0 I - \alpha_0/2 I = \alpha_0/2 I
	\end{align*}
	where the first inequality uses Lipschitz continuity, the second inequality uses our assumed condition~\eqref{eq:local-bound} of $H \left(1+\frac{2\delta}{\eta-\rho}+\frac{\delta^2}{(\eta-\rho)^2}\right) R\leq \alpha_0/2$, and the third inequality uses the $\alpha_0$-interaction dominance at $z_0$. Symmetric reasoning shows $\alpha_0/2$-interaction dominance in $y$ holds for each $z\in\Bouter$ as well
\end{proof}

From this, interaction dominance on the outer ball suffices to ensure the saddle envelope is strongly convex-strongly concave on the inner square.
\begin{lemma}
	The saddle envelope is $(\eta^{-1}+(\alpha_0/2)^{-1})^{-1}$-strongly convex-strongly concave on $\Binner$.
\end{lemma}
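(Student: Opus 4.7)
The plan is to reduce the lemma to a pointwise application of Proposition~\ref{prop:hessians}. For each $z\in\Binner$, that proposition expresses $\nabla^2_{xx}L_\eta(z)$ and $-\nabla^2_{yy}L_\eta(z)$ in terms of the Hessian of $L$ at $z_+=\prox_\eta(z)$, and delivers the desired strong convexity/concavity constant $(\eta^{-1}+(\alpha_0/2)^{-1})^{-1}$ as soon as $\alpha_0/2$-interaction dominance holds at $z_+$. Since the previous lemma already guarantees $\alpha_0/2$-interaction dominance throughout $\Bouter$, the whole proof reduces to showing that the proximal map carries $\Binner$ into $\Bouter$, i.e., $\prox_\eta(z)\in \Bouter$ whenever $z\in\Binner$.

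To control $\|z_+-z_0\|$ I would split it by the triangle inequality through $z_0^+:=\prox_\eta(z_0)$. For the first piece $\|z_+-z_0^+\|$, I would invoke the non-expansive (more precisely, $\eta/(\eta-\rho)$-Lipschitz) behavior of $\prox_\eta$ on $(\eta-\rho)$-strongly convex--strongly concave subproblems, which is exactly the bound extracted from Lemma~\ref{lem:helper0} in the proof of Proposition~\ref{prop:smoothness}. For the second piece $\|z_0^+-z_0\|$, I would apply Lemma~\ref{lem:helper0} directly to the proximal subproblem at $z_0$, whose gradient at $z_0$ is simply $\nabla L(z_0)$; this yields $\|z_0^+-z_0\|\le \|\nabla L(z_0)\|/(\eta-\rho)$. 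Using $\|z-z_0\|\le\sqrt{2}\,r$ on $\Binner$ gives
\[
\|z_+-z_0\|\ \le\ \frac{\sqrt{2}\,\eta\, r}{\eta-\rho}\ +\ \frac{\|\nabla L(z_0)\|}{\eta-\rho}\,.
\]

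Substituting the chosen $r=\tfrac{4(\eta+\alpha_0/2)}{\alpha_0}\tfrac{\|\nabla L(z_0)\|}{\eta-\rho}$ and comparing to $R$, the inequality $\|z_+-z_0\|\le R$ reduces (after cancelling the common factor $4\sqrt{2}(\eta+\alpha_0/2)/[\alpha_0(\eta-\rho)]$) to the simple condition $\eta\le (\eta-\rho)+\beta$, i.e., $\rho\le\beta$. This is automatic from the standing smoothness assumption~\eqref{eq:bounded-hessian} together with the weak convexity--concavity~\eqref{eq:weak}. Thus $z_+\in\Bouter$ for every $z\in\Binner$, and plugging $\alpha_0/2$-interaction dominance at $z_+$ into Proposition~\ref{prop:hessians} yields $\nabla^2_{xx}L_\eta(z)\succeq(\eta^{-1}+(\alpha_0/2)^{-1})^{-1}I$ and $-\nabla^2_{yy}L_\eta(z)\succeq(\eta^{-1}+(\alpha_0/2)^{-1})^{-1}I$ uniformly on $\Binner$, which is the claim.

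The only genuinely delicate step is the radius accounting: the constants $r$ and $R$ are tuned precisely so that the two-term bound above fits inside $R$. Once the triangle inequality plus the Lipschitz constant $\eta/(\eta-\rho)$ of the proximal operator is set up, the rest is bookkeeping; the interaction dominance and Hessian bounds then come for free from results already established.
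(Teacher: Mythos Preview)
Your proposal is correct and follows essentially the same strategy as the paper: reduce to showing $\prox_\eta(\Binner)\subseteq\Bouter$ and then invoke Proposition~\ref{prop:hessians} pointwise using the $\alpha_0/2$-interaction dominance already established on $\Bouter$. The only cosmetic difference is that the paper routes the triangle inequality through $z$ (bounding $\|z-z_+\|\le\|\nabla L(z)\|/(\eta-\rho)$ and then $\|\nabla L(z)\|\le\|\nabla L(z_0)\|+\beta\sqrt2\,r$ via the $\beta$-Lipschitz gradient), whereas you route through $z_0^+$ using the $\eta/(\eta-\rho)$-Lipschitz constant of $\prox_\eta$; your bound is in fact slightly sharper and collapses to $\rho\le\beta$, which is immediate.
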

\begin{proof}
	Given $\alpha_0/2$-interaction dominance holds on $\Bouter$, it suffices to show that for any $z=(x,y)\in \Binner$, the proximal step $z_+ = \prox_\eta(z) \in \Bouter$ as we can then apply the Hessian bounds from Proposition \ref{prop:hessians} to show strong convexity and strong concavity.
	
	Define the function underlying the computation of the proximal step at $(x,y)$ as
	$$ M(u,v) = L(u,v) + \frac{\eta}{2}\|u-x\|^2 -\frac{\eta}{2}\|v-y\|^2. $$
	Our choice of $\eta>\rho$ ensures that $M$ is $(\eta-\rho)$-strongly convex-strongly concave.
	Thus applying Lemma~\ref{lem:helper0} and then the $\beta$-Lipschitz continuity of $\nabla L(z)$ implies
	$$ \left\|\begin{bmatrix} x-x_+ \\ y-y_+\end{bmatrix}\right\| \leq \frac{\|\nabla M(x,y)\|}{\eta-\rho} = \frac{\|\nabla L(x,y)\|}{\eta-\rho} \leq \frac{\|\nabla L(x_0,y_0)\|+\beta\sqrt{2}r}{\eta-\rho}\ . $$
	Hence $\|z_0-z_+\|\leq \|z_0-z\|+\|z-z_+\|\leq \sqrt{2}r+\frac{\|\nabla L(z_0)\|+\beta\sqrt{2}r}{\eta-\rho}=R$.
\end{proof}

Armed with the knowledge that interaction dominance holds on $\Binner$, we return to the proof of Theorem \ref{thm:interaction-weak-convergence}. Observe that the gradient of the saddle envelope at $z_0=(x_0,y_0)$ is bounded by Lemma~\ref{lem:grad-formula} and Lemma~\ref{lem:helper0} as
$$\|\nabla L_\eta(z_0)\| = \|\eta(z_0-z^+_0)\| \leq \frac{\eta}{\eta-\rho}\|\nabla M_0(z_0)\|=\frac{\eta}{\eta-\rho}\|\nabla L(z_0)\| $$
where $z^+_0=\prox_\eta(z_0)$ and $M_0(u,v)= L(u,v)+ \frac{\eta}{2}\|u-x_0\|^2 -\frac{\eta}{2}\|v-y_0\|^2$ is the $\eta-\rho$-strongly convex-strongly concave function defining it.
Now we have shown all of the conditions necessary to apply Theorem~\ref{thm:standard-contraction} on the square $B(x_0, r)\times B(y_0,r)$ with
$$ r = \frac{4(\eta + \alpha_0/2)\|\nabla L(z_0)\|}{\alpha_0(\eta-\rho)} =\frac{2\|\nabla L_\eta(z_0)\|}{\mu} $$
upon which the saddle envelope is $\mu=(\eta^{-1}+(\alpha_0/2)^{-1})^{-1}$-strongly convex-strongly concave and $\beta=\max\{\eta, |\eta^{-1}-\rho^{-1}|^{-1}\}$-smooth.
Hence applying GDA with $s=\lambda/\eta$ to the saddle envelope produces iterates $(x_k,y_k)$ converging to a stationary point $(x^*,y^*)$ with
$$\left\|\begin{bmatrix} x_k-x^* \\ y_k-y^* \end{bmatrix}\right\|^2 \leq \left(1 - \frac{2\lambda}{\eta(\eta^{-1}+(\alpha_0/2)^{-1})} + \frac{\lambda^2}{\eta^2(\eta^{-1}-\rho^{-1})^2}\right)^{k}\left\|\begin{bmatrix} x_0-x^* \\ y_0-y^* \end{bmatrix}\right\|^2 . $$
By Corollary~\ref{cor:stationarity}, $(x^*,y^*)$ must also be a stationary point of $L$. Further, by Corollary~\ref{cor:PPM-to-GDA}, this sequence $(x_k,y_k)$ is the same as the sequence generated by running the damped PPM on~\eqref{eq:main-problem}.

\section{Interaction Moderate Regime}\label{sec:interaction-moderate}
Between the interaction dominant and interaction weak regimes, the proximal point method may diverge or cycle indefinitely (recall our introductory example in Figure~\ref{fig:sample-path} where convergence fails in this middle regime). 
We begin by considering the behavior of the proximal point method when applied to a nonconvex-nonconcave quadratic example. From this, our interaction dominance condition is tight, exactly describing when our example converges.

\subsection{Divergence and Tightness of the Interaction Dominance Regime} \label{subsec:tight}
Consider the following nonconvex-nonconcave quadratic minimax problem of
\begin{equation} \label{eq:quadratic}
\min_{x\in\RR^n}\max_{y\in\RR^n} L(x,y)=\frac{-\rho}{2}\|x\|^2 + ax^Ty - \frac{-\rho}{2}\|y\|^2
\end{equation}
where $a\in \RR$ controls the size of the interaction between $x$ and $y$ and $\rho\geq 0$ controls how weakly convex-weakly concave the problem is. Notice this problem has a stationary point at the origin. Even though this problem is nonconvex-nonconcave, PPM will still converge to the origin for some selections of $a$, $\rho$, and $\eta$. Examining our interaction dominance conditions~\eqref{eq:x-dominance} and~\eqref{eq:y-dominance}, this example is $\alpha=-\rho+a^2/(\eta-\rho)$-interaction dominant in both $x$ and $y$.

For quadratic problems, PPM always corresponds to the matrix multiplication. In the case of~\eqref{eq:quadratic}, the damped PPM iteration is given by
\begin{align*}
\begin{bmatrix} x_{k+1} \\ y_{k+1} \end{bmatrix} &= (1-\lambda)\begin{bmatrix} x_{k} \\ y_{k} \end{bmatrix} + \lambda\begin{bmatrix} (1-\rho/\eta)I & aI/\eta \\ -aI/\eta & (1-\rho/\eta)I\end{bmatrix}^{-1}\begin{bmatrix} x_{k} \\ y_{k} \end{bmatrix} \\ 
&= (1-\lambda)\begin{bmatrix} x_{k} \\ y_{k} \end{bmatrix} + \frac{\lambda\eta}{\eta-\rho}\left(\begin{bmatrix} I & aI/(\eta-\rho) \\ -aI/(\eta-\rho) & I\end{bmatrix}\right)^{-1}\begin{bmatrix} x_{k} \\ y_{k} \end{bmatrix} \\
& = (1-\lambda)\begin{bmatrix} x_{k} \\ y_{k} \end{bmatrix} + \frac{\lambda\eta}{a^2/(\eta-\rho)+\eta-\rho}\begin{bmatrix} I & -aI/(\eta-\rho) \\ aI/(\eta-\rho) & I\end{bmatrix}\begin{bmatrix} x_{k} \\ y_{k} \end{bmatrix} \\
& = \begin{bmatrix} CI & -DI \\ DI & CI\end{bmatrix}\begin{bmatrix} x_{k} \\ y_{k} \end{bmatrix}
\end{align*}
for constants
$ C  = 1 - \dfrac{\lambda\alpha}{\eta+\alpha}$ 
and $ D = \dfrac{\lambda\eta a}{(\eta+\alpha)(\eta-\rho)}$. 
Notice that these constants are well-defined since $\eta-\rho>0$ and  $\eta+\alpha>0$ (even if $\alpha$ is negative) since $\eta>\rho$ and $\alpha \geq -\rho$.
Matrix multiplication of this special final form has the following nice property for any $z$,
\begin{equation}
\left\|\begin{bmatrix} CI & -DI \\ DI & CI\end{bmatrix}z\right\|^2 = (C^2+D^2)\|z\|^2. 
\end{equation}
Hence this iteration will globally converge to the origin exactly when
\begin{align*}
\left(1 - \frac{\lambda\alpha}{\eta+\alpha}\right)^2+\left(\frac{\lambda\eta a}{(\eta+\alpha)(\eta-\rho)}\right)^2 < 1 \ .
\end{align*}
Likewise, the damped proximal point method will cycle indefinitely when this holds with equality and diverges when it is strictly violated.
As a result, violating $\alpha> 0$-interaction dominance (that is, having $\alpha \leq 0$) leads to divergence in~\eqref{eq:quadratic} for any choice of the averaging parameter $\lambda\in(0,1]$ since this forces $C\geq 1$ (and so $C^2+D^2>1$). Hence our interaction dominance boundary is tight.

Further, this example shows that considering the damped proximal point method (as opposed to fixing $\lambda=1$) is necessary to fully capture the convergence for interaction dominant problems. For example, setting $\rho=1, a=2, \eta=3$ has $\alpha=1$-interaction dominance in $x$ and $y$ and converges exactly when
$$ (1-\lambda/4)^2 + (3\lambda/4)^2 < 1 $$
which is satisfied when $\lambda\in(0,0.8)$, but not by the undamped proximal point method with $\lambda=1$. Our theory from Theorem~\ref{thm:two-sided-dominance} is slightly more conservative, guaranteeing convergence whenever
$ \lambda \leq 0.5 = 2\min\left\{1,(\eta/\rho-1)^2\right\}/(\eta/\alpha+1)$. 

\subsection{A Candidate Lyapunov for Interaction Moderate Problems}
The standard analysis of gradient descent on nonconvex optimization relies on the fact that the function value monotonically decays every iteration. However, such properties fail to hold in the nonconvex-nonconcave minimax setting: the objective is neither monotonically decreasing nor increasing while PPM runs. Worse yet, since we know the proximal point method may cycle indefinitely with gradients bounded away from zero (for example, recall the interaction moderate regime trajectories in Figure~\ref{fig:sample-path}), no ``Lyapunov''-type quantity can monotonically decrease along the iterates of the proximal point method.

In order to obtain a similar analysis as the standard nonconvex optimization approach, we propose to study the following ``Lyapunov'' function, which captures the difference between smoothing over $y$ and smoothing over $x$ using the classic Moreau envelope,
\begin{align}
\mathcal{L}(x,y) := & -e_\eta\{-L(x,\cdot)\}(y) - e_\eta\{L(\cdot,y)\}(x) \ . \label{eq:lyapunov} 
\end{align}
The following proposition establishes structural properties supporting our consideration of $\mathcal{L}(x,y)$.

\begin{theorem}\label{thm:lyapunov-properties}
	The Lyapnuov $\mathcal{L}(x,y)$ has the following structural properties:
	\begin{enumerate}
		\item $\mathcal{L}(x,y) \geq 0$,
		\item When $\eta>\rho$, $\mathcal{L}(x,y) = 0$ if and only if $(x,y)$ is a stationary point to $L(x,y)$,
		\item When $\eta=0$, $\mathcal{L}(x,y)$ recovers the well-known primal-dual gap of $L(x,y)$
		$$ \mathcal{L}(x,y) = \max_{v} L(x,v) - \min_{u}L(u,y).$$
	\end{enumerate}
\end{theorem}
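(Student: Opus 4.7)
The plan is to expand the Moreau envelopes explicitly and then verify each of the three properties in turn. Writing
\begin{align*}
\mathcal{L}(x,y) = \max_{v}\Big\{L(x,v)-\tfrac{\eta}{2}\|v-y\|^2\Big\} - \min_{u}\Big\{L(u,y)+\tfrac{\eta}{2}\|u-x\|^2\Big\}
\end{align*}
immediately puts each term in a form where $(u,v)=(x,y)$ is a natural feasible choice, which is the only algebraic observation needed for the first and third claims.

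For property 1, I would simply plug in $v=y$ into the outer maximum and $u=x$ into the outer minimum. Each substitution gives a value of $L(x,y)$, so the max is at least $L(x,y)$ and the min is at most $L(x,y)$, yielding $\mathcal{L}(x,y)\ge 0$. For property 3, setting $\eta=0$ annihilates both quadratic penalties, and the displayed formula reduces directly to $\max_v L(x,v)-\min_u L(u,y)$, which is the standard primal-dual gap.

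The main content is property 2, and this is where the condition $\eta>\rho$ enters. Under $\rho$-weak convexity-concavity, the function $v\mapsto L(x,v)-\tfrac{\eta}{2}\|v-y\|^2$ is $(\eta-\rho)$-strongly concave and $u\mapsto L(u,y)+\tfrac{\eta}{2}\|u-x\|^2$ is $(\eta-\rho)$-strongly convex, so each subproblem admits a unique optimizer. For the forward direction, if $\nabla L(x,y)=0$ then $v=y$ satisfies the first-order condition $\nabla_v L(x,y)-\eta(v-y)=0$ and hence is \emph{the} unique maximizer in the first subproblem, contributing value $L(x,y)$; symmetrically $u=x$ is the unique minimizer of the second, also contributing $L(x,y)$. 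Subtracting gives $\mathcal{L}(x,y)=0$.

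For the converse, which I expect to be the subtlest step, assume $\mathcal{L}(x,y)=0$. From property 1 we have the chain
\begin{align*}
\max_{v}\Big\{L(x,v)-\tfrac{\eta}{2}\|v-y\|^2\Big\} \;\ge\; L(x,y) \;\ge\; \min_{u}\Big\{L(u,y)+\tfrac{\eta}{2}\|u-x\|^2\Big\},
\end{align*}
and equality of the outer ends forces equality throughout. Thus $v=y$ achieves the maximum value in the first subproblem; by the uniqueness of the strongly concave maximizer this forces $y$ to satisfy the first-order condition, i.e.\ $\nabla_y L(x,y)=0$. Symmetrically $u=x$ is the unique strongly convex minimizer of the second subproblem, giving $\nabla_x L(x,y)=0$. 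Hence $(x,y)$ is stationary for $L$, completing the proof.
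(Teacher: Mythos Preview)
Your proposal is correct and follows essentially the same approach as the paper's proof: both arguments use the trivial lower bound from plugging $(u,v)=(x,y)$ into the Moreau subproblems for property~1, observe the quadratic terms vanish for property~3, and for property~2 use that under $\eta>\rho$ each subproblem is strongly convex/concave so the envelope equals $L(x,y)$ exactly when $(x,y)$ satisfies the first-order condition of that subproblem. Your version simply spells out the strong-convexity uniqueness step explicitly where the paper cites the Moreau gradient formula~\eqref{eq:moreau-grad}.
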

\begin{proof}
	Recall that a Moreau envelope $e_\eta\{f(\cdot)\}(x)$ provides a lower bound~\eqref{eq:moreau-lower-bound} on $f$ everywhere. Hence $e_\eta\{-L(x,\cdot)\}(y) \leq -L(x,y)$ and $e_\eta\{L(\cdot,y)\}(x) \leq L(x,y)$, and so our proposed Lyapunov is always nonnegative since
	$$ \mathcal{L}(x,y) = -e_\eta\{-L(x,\cdot)\}(y) - e_\eta\{L(\cdot,y)\}(x) \geq L(x,y)-L(x,y)=0 \ .$$
	Further, it follows from~\eqref{eq:moreau-grad} that for any $\rho$-weakly convex function $f$, selecting $\eta>\rho$ ensures the Moreau envelope equals the given function precisely at its stationary point. Then the preceding nonnegativity argument holds with equality if and only if 
	$$ \nabla_y -L(x,\cdot)(y) = 0 \ \ \ \text{and} \ \ \ \nabla_x L(\cdot,y)(x) = 0 \ . $$
	Hence we have $\mathcal{L}(x,y)=0 \iff \nabla L(x,y)=0$. 
	Lastly, when $\eta=0$, we have
	\begin{align*}
	\mathcal{L}(x,y) &= -\min_{v}\left\{-L(x,v) + \frac{\eta}{2}\|v-y\|^2\right\} - \min_{u}\left\{-L(u,y) + \frac{\eta}{2}\|u-x\|^2\right\}\\
	&= \max_{v} L(x,v) - \min_{u}L(u,y) \ ,
	\end{align*}
	recovering the primal-dual gap for $L(x,y)$. 
\end{proof}
For example, computing the Moreau envelopes defining $\mathcal{L}(z)$ for~\eqref{eq:quadratic} gives
\begin{align}
e_\eta\{L(\cdot, y)\}(x) &= \frac{1}{2}(\eta^{-1}-\rho^{-1})^{-1}\|x\|^2 + \frac{\eta a}{\eta-\rho}x^Ty - \frac{\alpha}{2}\|y\|^2 \label{eq:x-smoothing}\\
e_\eta\{-L(x,\cdot)\}(y) &= -\frac{\alpha}{2}\|x\|^2 - \frac{\eta a}{\eta-\rho}x^Ty + \frac{1}{2}(\eta^{-1}-\rho^{-1})^{-1}\|y\|^2\label{eq:y-smoothing}
\end{align}
where $\alpha = -\rho+a^2/(\eta-\rho)$ is this problem's interaction dominance. Hence 
\begin{align*}
\mathcal{L}(z) 
&=\frac{1}{2}\left(\alpha - (\eta^{-1}-\rho^{-1})^{-1} \right)\|z\|^2 \ .
\end{align*}
Noting that $\alpha \geq -\rho$ and $ -(\eta^{-1}-\rho^{-1})^{-1} > -\rho$, we see that the origin is the unique minimizer of $\mathcal{L}(z)$ and consequently the unique stationary point of $L$. In this case, minimizing $\mathcal{L}(z)$ is simple convex optimization.

Future works could identify further tractable nonconvex-nonconcave problem settings where algorithms can minimize $\mathcal{L}(x,y)$ instead as all of its global minimums are stationary points of the original objective. Since this problem is purely one of minimization, cycling can be ruled out directly.
As previously observed, the proximal point method is not such an algorithm since it may fall into a cycle and fail to monotonically decrease $\mathcal{L}(z)$. Instead, we find the following weakened descent condition for $\mathcal{L}(z)$, relating its change to our $\alpha$-interaction dominance conditions. Note that this result holds regardless of whether the interaction dominance parameter $\alpha$ is positive or negative.
\begin{theorem}\label{thm:middle-case-upper-bound}
	For any $\rho$-weakly convex-weakly concave, $\alpha\in\RR$-interaction dominant in $x$ and $y$ problem, any $z\in\RR^n\times\RR^m$ has $z_+=\prox_\eta(z)$ satisfy
	$$ \mathcal{L}(z_+) \leq \mathcal{L}(z) - \frac{1}{2}\left(\alpha+(\eta^{-1}-\rho^{-1})^{-1}\right)\|z_+-z\|^2 \ .$$
\end{theorem}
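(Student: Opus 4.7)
My approach is to split the Lyapunov $\mathcal{L}=\psi-\phi$ into its two Moreau-envelope pieces $\phi(x,y):=e_\eta\{L(\cdot,y)\}(x)$ and $\psi(x,y):=-e_\eta\{-L(x,\cdot)\}(y)$, and to bound the change of each along the proximal step using the two different kinds of curvature that $\phi$ and $\psi$ possess in each coordinate.

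First I would record two pieces of structural information. (a) From the standard Moreau calculus~\eqref{eq:moreau-hessian-bounds} in the smoothing directions (applied to $L(\cdot,y)$ for $\phi$ and to $-L(x,\cdot)$ for $\psi$, which are both $\rho$-weakly convex and for which $\eta>\rho$), one gets that $\phi$ is $\tfrac{\eta\rho}{\eta-\rho}$-weakly convex in $x$ and $\psi$ is $\tfrac{\eta\rho}{\eta-\rho}$-weakly concave in $y$, with the useful identity $\tfrac{\eta\rho}{\eta-\rho}=-(\eta^{-1}-\rho^{-1})^{-1}$. (b) In the non-smoothing directions, the envelope and implicit function theorems give
\begin{align*}
\nabla^2_{xx}\psi(x,y) &= \nabla^2_{xx}L(x,\hat v) + \nabla^2_{xy}L(x,\hat v)\bigl(\eta I-\nabla^2_{yy}L(x,\hat v)\bigr)^{-1}\nabla^2_{yx}L(x,\hat v),\\
-\nabla^2_{yy}\phi(x,y) &= -\nabla^2_{yy}L(\hat u,y) + \nabla^2_{yx}L(\hat u,y)\bigl(\eta I+\nabla^2_{xx}L(\hat u,y)\bigr)^{-1}\nabla^2_{xy}L(\hat u,y),
\end{align*}
which are exactly the left-hand sides of the $x$- and $y$-interaction dominance conditions~\eqref{eq:x-dominance},~\eqref{eq:y-dominance} evaluated at the envelope's proximal optimizers $\hat u,\hat v$. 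Assumed $\alpha$-interaction dominance everywhere therefore yields that $\psi(\cdot,y)$ is $\alpha$-convex in $x$ and $\phi(x,\cdot)$ is $\alpha$-concave in $y$ (strongly if $\alpha>0$, weakly otherwise).

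Next I would exploit two identities matching these envelopes to the PPM step. Comparing first-order conditions, the Moreau minimizer $\hat u(x,y_+)$ solves the same $(\eta-\rho)$-strongly convex problem as the $x$-component of $\prox_\eta(z)$, giving $\hat u(x,y_+)=x_+$; symmetrically $\hat v(x_+,y)=y_+$. Combined with the Moreau gradient formula~\eqref{eq:moreau-grad} (for the smoothing direction) and the envelope-theorem gradient formula together with PPM's first-order optimality $\nabla_xL(z_+)=\eta(x-x_+)$, $\nabla_yL(z_+)=\eta(y_+-y)$ (for the non-smoothing direction), this produces the clean reductions
\begin{equation*}
\nabla_x\psi(x_+,y)=\nabla_xL(x_+,y_+)=\eta(x-x_+), \qquad \nabla_y\phi(x,y_+)=\nabla_yL(x_+,y_+)=\eta(y_+-y),
\end{equation*}
and likewise $\nabla_y\psi(x_+,y)=\eta(y_+-y)$ and $\nabla_x\phi(x,y_+)=\eta(x-x_+)$ directly from the Moreau gradient formula.

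Finally I would decompose through the intermediate points $(x_+,y)$ and $(x,y_+)$:
\begin{align*}
\psi(z_+)-\psi(z) &= [\psi(x_+,y_+)-\psi(x_+,y)] + [\psi(x_+,y)-\psi(x,y)],\\
\phi(z)-\phi(z_+) &= [\phi(x,y)-\phi(x,y_+)] + [\phi(x,y_+)-\phi(x_+,y_+)],
\end{align*}
and apply the quadratic upper bounds coming from the curvature information in (a) and (b) to each of the four pieces, plugging in the gradient identities above for the linear terms. Every linear term evaluates to $\pm\eta\|x_+-x\|^2$ or $\pm\eta\|y_+-y\|^2$, and these $\eta$-contributions cancel in pairs between the $\psi$- and $\phi$-decompositions. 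What remains is exactly $\tfrac{1}{2}\tfrac{\eta\rho}{\eta-\rho}\|z_+-z\|^2$ from the Moreau curvatures minus $\tfrac{\alpha}{2}\|z_+-z\|^2$ from the interaction-dominance curvatures, giving the claimed bound $-\tfrac{1}{2}\bigl(\alpha+(\eta^{-1}-\rho^{-1})^{-1}\bigr)\|z_+-z\|^2$. The main obstacle is spotting the right intermediate-point decomposition: it is precisely the choice $(x_+,y)$ for $\psi$ and $(x,y_+)$ for $\phi$ that activates the identities $\hat u(x,y_+)=x_+$ and $\hat v(x_+,y)=y_+$, which in turn lets the envelope theorem convert envelope gradients into gradients of $L$ at $z_+$ so that PPM's first-order conditions can close the computation.
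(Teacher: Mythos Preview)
Your proposal is correct and follows essentially the same approach as the paper. The paper packages your curvature facts (a) and (b) into a single preliminary lemma (Lemma~\ref{lem:one-side-smoothing}) and then carries out exactly the decomposition you describe, routing $\psi$ through the intermediate point $(x_+,y)$ and $-\phi$ through $(x,y_+)$; the gradient identities $\hat v(x_+,y)=y_+$ and $\hat u(x,y_+)=x_+$ and the resulting cancellation of the $\eta$-linear terms are the same mechanism the paper uses.
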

\begin{remark}
	This upper bound is attained by our example diverging problem~\eqref{eq:quadratic}. This is example attains our bound since the proof of Theorem~\ref{thm:middle-case-upper-bound} only introduces inequalities by using the following four Hessian bounds for every $(u,v)$
	\begin{align*}
	\nabla^2_{xx}-e_\eta\{-L(u,\cdot)\}(v) \succeq \alpha I\ , & \ \  \nabla^2_{yy}-e_\eta\{-L(u,\cdot)\}(v) \preceq -(\eta^{-1}-\rho^{-1})^{-1} I\ , \\
	\nabla^2_{yy}-e_\eta\{L(\cdot,v)\}(u) \succeq \alpha I\ , & \ \  \nabla^2_{xx}-e_\eta\{L(\cdot,v)\}(u) \preceq -(\eta^{-1}-\rho^{-1})^{-1} I \ .
	\end{align*}
	Observing that all four of these bounds hold with equality everywhere in \eqref{eq:x-smoothing} and \eqref{eq:y-smoothing} shows our recurrence holds with equality.
\end{remark}
\begin{remark}
	For generic minimax problems, Theorem~\ref{thm:middle-case-upper-bound} bounds how quickly PPM can diverge. For any objective $L$ that is $l$-Lipschitz and nearly convex-concave, satisfying weak convexity-weak concavity~\eqref{eq:weak} with some $\rho=\epsilon$. Then since $\alpha\geq-\rho=-\epsilon$, the Lyapanov increases by at most $O(\epsilon)$ as
	$$ \mathcal{L}(z_+) -\mathcal{L}(z) \leq  - \frac{1}{2}\left(\alpha-\frac{\eta\rho}{\eta-\rho}\right)\|\nabla L(z_+)/\eta\|^2 \leq \frac{\epsilon l^2}{2\eta^2}\left(1+\frac{\eta}{\eta-\epsilon}\right) \approx \frac{\epsilon l^2}{\eta^2} \ .$$
\end{remark}

\subsection{Proof of Theorem~\ref{thm:middle-case-upper-bound}}\label{sec:proof-hard}
First, we bound the Hessians of the functions defining our Lyapunov $\mathcal{L}(z)$.
\begin{lemma}\label{lem:one-side-smoothing}
	If the $x$-interaction dominance~\eqref{eq:x-dominance} holds with $\alpha\in\RR$, the function $e_\eta\{L(\cdot,y)\}(x)$ has Hessians in $x$ and $y$ bounded by
	$$ (\eta^{-1}-\rho^{-1})^{-1} I \preceq \nabla^2_{xx} e_\eta\{L(\cdot,y)\}(x) \preceq \eta I \ \ \text{and} \ \  \nabla^2_{yy} e_\eta\{L(\cdot,y)\}(x) \preceq -\alpha I \ . $$
	Symmetrically, if the $y$-interaction dominance~\eqref{eq:y-dominance} holds with $\alpha\in\RR$, 
	$$ \nabla^2_{xx} e_\eta\{-L(x,\cdot)\}(y) \preceq -\alpha I  \ \ \text{and} \ \ (\eta^{-1}-\rho^{-1})^{-1} I \preceq \nabla^2_{yy} e_\eta\{-L(x,\cdot)\}(y) \preceq \eta I.$$
\end{lemma}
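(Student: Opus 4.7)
The first half of the lemma concerns $\phi(x,y):=e_\eta\{L(\cdot,y)\}(x)$, the ordinary Moreau envelope of $L$ in its first argument with $y$ held fixed. The $\nabla^2_{xx}\phi$ bound is immediate from the Moreau envelope calculus recalled in Section~\ref{sec:prelim}: since $L(\cdot,y)$ is $\rho$-weakly convex and $\eta>\rho$, formula~\eqref{eq:moreau-hess} applied pointwise gives
$$\nabla^2_{xx}\phi(x,y)=\eta I-\eta^2\bigl(\eta I+\nabla^2_{xx}L(x_+,y)\bigr)^{-1},$$
where $x_+=x_+(x,y)$ is the proximal minimizer, and the two-sided sandwich $(\eta^{-1}-\rho^{-1})^{-1}I\preceq\nabla^2_{xx}\phi\preceq\eta I$ is then precisely~\eqref{eq:moreau-hessian-bounds}.

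The substantive step is the block $\nabla^2_{yy}\phi$. Using the envelope theorem---exactly as in the proof of Lemma~\ref{lem:grad-formula}---I would first observe that the implicit contribution through $x_+$ cancels against the proximal quadratic, leaving $\nabla_y\phi(x,y)=\nabla_y L(x_+,y)$. Differentiating this identity in $y$, now treating $x_+=x_+(x,y)$ as varying, yields
$$\nabla^2_{yy}\phi(x,y)=\nabla^2_{yy}L(x_+,y)+\nabla^2_{yx}L(x_+,y)\,\partial_y x_+.$$
To evaluate $\partial_y x_+$ I differentiate the first-order optimality condition $\nabla_xL(x_+,y)+\eta(x_+-x)=0$ in $y$. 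Since $\eta I+\nabla^2_{xx}L(x_+,y)\succeq(\eta-\rho)I\succ 0$, the implicit function theorem applies and gives
$$\partial_y x_+=-\bigl(\eta I+\nabla^2_{xx}L(x_+,y)\bigr)^{-1}\nabla^2_{xy}L(x_+,y).$$
Substituting back produces the key identity
$$-\nabla^2_{yy}\phi(x,y)=-\nabla^2_{yy}L(x_+,y)+\nabla^2_{yx}L(x_+,y)\bigl(\eta I+\nabla^2_{xx}L(x_+,y)\bigr)^{-1}\nabla^2_{xy}L(x_+,y),$$
whose right-hand side is exactly the quadratic form appearing in Definition~\ref{def:interaction-dominance} (evaluated at the point $(x_+,y)$). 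Applying the assumed interaction-dominance condition at $z=(x_+,y)$ therefore gives $-\nabla^2_{yy}\phi\succeq\alpha I$, which is the claimed bound.

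The symmetric half for $\psi(x,y):=e_\eta\{-L(x,\cdot)\}(y)$ follows the same template applied to the $\rho$-weakly convex function $v\mapsto-L(x,v)$: the $\nabla^2_{yy}\psi$ two-sided bound is~\eqref{eq:moreau-hessian-bounds}, while the $\nabla^2_{xx}\psi$ bound comes from the envelope theorem plus implicit differentiation of the first-order condition $-\nabla_yL(x,y_+)+\eta(y_+-y)=0$. The resulting expression for $-\nabla^2_{xx}\psi$ is the companion interaction-dominance form evaluated at $(x,y_+)$, and the assumed interaction dominance closes the bound.

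The only step requiring genuine care is the implicit differentiation. I need to verify that $x_+(x,y)$ (resp.\ $y_+(x,y)$) is $C^1$ on a neighborhood of every $(x,y)$---which follows from twice differentiability of $L$ together with the invertibility of $\eta I+\nabla^2_{xx}L$ (resp.\ $\eta I-\nabla^2_{yy}L$) under $\eta>\rho$---and then execute the chain rule cleanly enough to land on the exact Schur-complement form in Definition~\ref{def:interaction-dominance}. Beyond that, the argument is just bookkeeping paralleling the Hessian computation already carried out in Lemma~\ref{lem:hess-formula}.
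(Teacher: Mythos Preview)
Your approach is essentially the paper's: the $\nabla^2_{xx}$ bounds come straight from the Moreau envelope Hessian bounds~\eqref{eq:moreau-hessian-bounds}, and the $\nabla^2_{yy}$ bound comes from differentiating the optimality condition for $x_+$ implicitly in $y$ and recognizing the resulting Schur-complement expression as an interaction-dominance form. The only methodological difference is that the paper carries out the implicit differentiation via an explicit quadratic Taylor model of $L$ around $(x_+,y)$ (exactly paralleling its proof of Lemma~\ref{lem:hess-formula}), whereas you invoke the implicit function theorem directly; these are the same computation, and your version is cleaner.

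One point you should sharpen: the expression you derive,
\[
-\nabla^2_{yy}\phi(x,y)=-\nabla^2_{yy}L(x_+,y)+\nabla^2_{yx}L(x_+,y)\bigl(\eta I+\nabla^2_{xx}L(x_+,y)\bigr)^{-1}\nabla^2_{xy}L(x_+,y),
\]
is precisely the \emph{$y$-interaction dominance} form~\eqref{eq:y-dominance}, not the $x$-dominance~\eqref{eq:x-dominance} named in the lemma hypothesis. The paper's own proof explicitly invokes~\eqref{eq:y-dominance} at this step (so the two hypotheses in the lemma statement are evidently swapped), and you should say which condition you are applying rather than referring generically to ``the assumed interaction-dominance condition.'' Likewise in the symmetric half, the bound on $\nabla^2_{xx}e_\eta\{-L(x,\cdot)\}(y)$ requires~\eqref{eq:x-dominance}.
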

\begin{proof}
	For the Hessian bound in the $x$ variable, this follows directly from the Moreau envelope Hessian bounds~\eqref{eq:moreau-hessian-bounds}.
	Considering $e_\eta\{L(\cdot,y)\}(x)$ as a function of $y$, we find that its gradient is given by $\nabla_{y} e_\eta\{L(\cdot,y)\}(x) = \nabla_{y} L(x_+,y)$ and Hessian $\nabla^2_{yy} e_\eta\{L(\cdot,y)\}(x)$ is given by
	\begin{align*}
	\nabla^2_{yy} L(x_+,y) - \nabla^2_{yx} L(x_+,y)(\eta I  +\nabla^2_{xx} L(x_+,y))^{-1}\nabla^2_{xy} L(x_+,y)
	\end{align*}
	where $x_+ = \mathrm{argmin}_u L(u,y) +\frac{\eta}{2}\|u-x\|^2$. Noting that this Hessian matches the $\alpha$-interaction dominance condition~\eqref{eq:y-dominance} gives our bound on $-\nabla^2_{yy} e_\eta\{L(\cdot,y)\}(x)$.
	
	All that remains is to derive our claimed gradient and Hessian formulas in $y$. Consider a nearby point $y^\Delta=y+\Delta$ and denote $x^\Delta_+ = \mathrm{argmin}_u L(u,y^\Delta) +\frac{\eta}{2}\|u-x\|^2$. 
	Consider the second-order Taylor model of the objective $L$ around $(x_+,y)$ denoted by $\widetilde L(u,v)$ with value
	\begin{align*}
	&L(x_+,y) + \begin{bmatrix}\nabla_x L(x_+,y) \\ \nabla_y L(x_+,y)\end{bmatrix}^T\begin{bmatrix} u-x_+ \\ v-y \end{bmatrix} 
	+ \frac{1}{2}\begin{bmatrix} u-x_+ \\ v-y \end{bmatrix}^T\begin{bmatrix} \nabla^2_{xx} L(x_+,y) & \nabla^2_{xy} L(x_+,y) \\ \nabla^2_{yx} L(x_+,y) & \nabla^2_{yy} L(x_+,y) \end{bmatrix}\begin{bmatrix} u-x_+ \\ v-y \end{bmatrix} \ .
	\end{align*}
	
	Denote the $\widetilde x^\Delta_+ = \mathrm{argmin}_u \widetilde L(u,y^\Delta) +\frac{\eta}{2}\|u-x\|^2$. Noting this point is uniquely defined by its first-order optimality conditions, we have
	\begin{align*}
	&\nabla_x L(x_+,y) + \nabla_{xx}^2L(x_+,y)(\widetilde x^\Delta_+ - x_+) + \nabla_{xy}^2L(x_+,y)\Delta +\eta(\widetilde x^\Delta_+ - x)= 0 \ ,\\
	&\implies   (\eta I + \nabla^2_{xx} L(x_+,y))(\widetilde x^\Delta_+-x_+) = -\nabla_{xy}^2L(x_+,y)\Delta \ ,\\
	&\implies  \widetilde x^\Delta_+-x_+ = -(\eta I + \nabla^2_{xx} L(x_+,y))^{-1}\nabla_{xy}^2L(x_+,y)\Delta \ .
	\end{align*}
	Denote the proximal subproblem objective by $ M^\Delta (u,v) = L(u,y^\Delta) + \frac{\eta}{2}\|u-x\|^2$ and its approximation by $\widetilde M^\Delta (u,v) = \widetilde L(u,y^\Delta) + \frac{\eta}{2}\|u-x\|^2$.
	Noting that $\|\nabla_x \widetilde M^\Delta (x_+,y^\Delta)\| = \|\nabla^2_{xy} L(x_+, y)\Delta\|$, the $(\eta-\rho)$-strongly convexity of $\widetilde M^\Delta$ bounds the distance to its minimizer by
	$$ \|x_+-\widetilde x_+^\Delta\|\leq \frac{\|\nabla^2_{xy}  L(x_+, y)\Delta\|}{\eta-\rho} = O(\|\Delta\|)\ .$$
	Consequently, we can bound difference in gradients between $L$ and its model $\widetilde L$ at $\widetilde x^\Delta_+$ by $ \|\nabla L(\widetilde x^\Delta_+, y^\Delta) - \nabla \widetilde L(\widetilde x^\Delta_+, y^\Delta)\| = o(\|\Delta\|)$. Therefore $\|\nabla M^\Delta(\widetilde x^\Delta_+,y^\Delta)\| = o(\|\Delta\|)$. Then using the strong convexity of $M^\Delta$ with this gradient bound, we conclude the distance from $\widetilde x^\Delta_+$ to the minimizer $x^\Delta_+$ is bounded by
	$ \|\widetilde x^\Delta_+ - x^\Delta_+\| = o(\|\Delta\|) . $
	Then our claimed gradient formula follows as
	\begin{align*}
	&e_\eta\{L(\cdot,y^\Delta)\}(x) - e_\eta\{L(\cdot,y)\}(x)\\
	&\qquad= L(x^\Delta_+,y^\Delta) +\frac{\eta}{2}\|x^\Delta_+ - x\|^2 - L(x_+,y) - \frac{\eta}{2}\|x_+ - x\|^2\\
	&\qquad= \begin{bmatrix} \nabla_x L(x_+,y) + \eta(x_+-x) \\ \nabla_y L(x_+,y) \end{bmatrix}^T\begin{bmatrix} x_+^\Delta-x_+ \\ \Delta \end{bmatrix} +o(\|\Delta\|)\\
	&\qquad= \nabla_y L(x_+,y)^T\Delta + o(\|\Delta\|)\ .
	\end{align*}
	Moreover, our claimed Hessian formula follows as
	\begin{align*}
	&\nabla_{y} e_\eta\{L(\cdot,y^\Delta)\}(x) - \nabla_{y} e_\eta\{L(\cdot,y)\}(x)\\
	&\qquad = \nabla_y L(x^\Delta_+,y^\Delta) - \nabla_y L(x_+,y) \\
	&\qquad  = \nabla_y \widetilde L(\widetilde x^\Delta_+,y^\Delta) - \nabla_y L(x_+,y) + o(\|\Delta\|)\\
	&\qquad  = \begin{bmatrix} \nabla^2_{xy} L(x_+,y) \\\nabla^2_{yy} L(x_+,y) \end{bmatrix}^T\begin{bmatrix} -(\eta I + \nabla^2_{xx} L(x_+,y))^{-1}\nabla_{xy}^2L(x_+,y)\Delta \\ \Delta \end{bmatrix}+ o(\|\Delta\|)\ .
	\end{align*}
\end{proof}

Notice that $-e_\eta\{-L(u,\cdot)\}(y)$ has gradient at $x_+$ of $\nabla_x -e_\eta\{-L(x_{+},\cdot)\}(y) = \nabla_x L(z_+)=\eta(x-x_+)$ and from Lemma~\ref{lem:one-side-smoothing} that its Hessian in $x$ is uniformly lower bounded by $\alpha I$. As a result, we have the following decrease in $-e_\eta\{-L(u,\cdot)\}(y)$ when moving from $x$ to $x_+$
\begin{align*}
-e_\eta\{-L(x_+,\cdot)\}(y) &\leq -e_\eta\{-L(x,\cdot)\}(y) + \nabla_x L(z_+)^T(x_+-x) -\frac{\alpha}{2}\|x_+-x\|^2\\
& = -e_\eta\{-L(x,\cdot)\}(y)-\left(\eta + \frac{\alpha}{2}\right)\|x_+-x\|^2 \ .
\end{align*}
From the gradient formula~\eqref{eq:moreau-grad}, we know that $\nabla_y-e_\eta\{-L(x_+,\cdot)\}(y) = \nabla_y L(z_+)=\eta(y_+-y)$ and from Lemma~\ref{lem:one-side-smoothing} that its Hessian in $y$ is uniformly bounded above by $-(\eta^{-1}-\rho^{-1})^{-1}I$. Then we can upper bound  the change in $-e_\eta\{-L(x_+,\cdot)\}(v)$ when moving from $y$ to $y_+$ as
\begin{align*}
&-e_\eta\{-L(x_+,\cdot)\}(y_+) + e_\eta\{-L(x_+,\cdot)\}(y)\\
&\qquad \leq \nabla_y L(z_+)^T(y_+-y) + \frac{-(\eta^{-1}-\rho^{-1})^{-1}}{2}\|y_+-y\|^2\\
&\qquad = \left(\eta -\frac{(\eta^{-1}-\rho^{-1})^{-1}}{2}\right)\|y_+-y\|^2 \ .
\end{align*}
Summing these two inequalities yields
\begin{align*}
&-e_\eta\{-L(x_+,\cdot)\}(y_+) + e_\eta\{-L(x,\cdot)\}(y)\\
&\qquad \leq \left(\eta -\frac{(\eta^{-1}-\rho^{-1})^{-1}}{2}\right)\|y_+-y\|^2 -\left(\eta + \frac{\alpha}{2}\right)\|x_+-x\|^2 \ .
\end{align*}
Symmetrically, the change in $-e_\eta\{L(\cdot,y)\}(x)$ from $z$ to $z_+$ is
\begin{align*}
&-e_\eta\{L(\cdot,y_+)\}(x_+) + e_\eta\{L(\cdot,y)\}(x)\\
&\qquad \leq \left(\eta -\frac{(\eta^{-1}-\rho^{-1})^{-1}}{2}\right)\|x_+-x\|^2 -\left(\eta + \frac{\alpha}{2}\right)\|y_+-y\|^2 \ .
\end{align*}
Summing these two symmetric results gives the claimed bound.

	\paragraph{Acknowledgements.}  Benjamin Grimmer was supported by the National Science Foundation Graduate Research Fellowship under Grant No. DGE-1650441.
	\medskip
	
	\small
	
	\bibliographystyle{plain}
	\bibliography{references}

\begin{thebibliography}{10}

\bibitem{Attouch1986}
Hedy Attouch, Dominique Aze, and Roger J.-B. Wets.
\newblock On continuity properties of the partial legendre-fenchel transform:
  Convergence of sequences of augmented lagrangian functions, moreau-yosida
  approximates and subdifferential operators.
\newblock In {\em Fermat Days 85: Mathematics for Optimization}, volume 129 of
  {\em North-Holland Mathematics Studies}, pages 1--42. 1986.

\bibitem{Attouch1983-second}
Hedy Attouch and Roger J.-B. Wets.
\newblock A convergence for bivariate functions aimed at the convergence of
  saddle values.
\newblock 1983.

\bibitem{Attouch1983}
Hedy Attouch and Roger J.-B. Wets.
\newblock A convergence theory for saddle functions.
\newblock {\em Transactions of the American Mathematical Society},
  280(1):1--41, 1983.

\bibitem{Aze1988}
Dominique Aze.
\newblock Rate of convergence for the saddle points of convex-concave
  functions.
\newblock {\em International Series of Numerical Mathematic}, 84:1--23, 1988.

\bibitem{chambolle2011first}
Antonin Chambolle and Thomas Pock.
\newblock A first-order primal-dual algorithm for convex problems with
  applications to imaging.
\newblock {\em Journal of mathematical imaging and vision}, 40(1):120--145,
  2011.

\bibitem{Dai1801}
Bo~Dai, Albert Shaw, Niao He, Lihong Li, and Le~Song.
\newblock Boosting the actor with dual critic.
\newblock In {\em ICLR 2018}.

\bibitem{DaskalakisNIPS2018}
Constantinos Daskalakis and Ioannis Panageas.
\newblock The limit points of (optimistic) gradient descent in min-max
  optimization.
\newblock In {\em Advances in Neural Information Processing Systems 31}, pages
  9236--9246. Curran Associates, Inc., 2018.

\bibitem{Davis1803}
Damek Davis and Dmitriy Drusvyatskiy.
\newblock Stochastic model-based minimization of weakly convex functions.
\newblock {\em SIAM Journal on Optimization}, 29(1):207--239, 2019.

\bibitem{Davis1901}
Damek Davis and Benjamin Grimmer.
\newblock Proximally guided stochastic subgradient method for nonsmooth,
  nonconvex problems.
\newblock {\em SIAM Journal on Optimization}, 29(3):1908--1930, 2019.

\bibitem{Diakonikolas2021}
Jelena Diakonikolas, Constantinos Daskalakis, and Michael Jordan.
\newblock Efficient methods for structured nonconvex-nonconcave min-max
  optimization.
\newblock In {\em Proceedings of The 24th International Conference on
  Artificial Intelligence and Statistics}, volume 130 of {\em Proceedings of
  Machine Learning Research}, pages 2746--2754. PMLR, 13--15 Apr 2021.

\bibitem{douglas1956numerical}
Jim Douglas and Henry~H Rachford.
\newblock On the numerical solution of heat conduction problems in two and
  three space variables.
\newblock {\em Transactions of the American mathematical Society},
  82(2):421--439, 1956.

\bibitem{eckstein1992douglas}
Jonathan Eckstein and Dimitri~P Bertsekas.
\newblock On the douglas—rachford splitting method and the proximal point
  algorithm for maximal monotone operators.
\newblock {\em Mathematical Programming}, 55(1-3):293--318, 1992.

\bibitem{farnia2020gans}
Farzan Farnia and Asuman Ozdaglar.
\newblock Do gans always have nash equilibria?
\newblock In {\em International Conference on Machine Learning}, pages
  3029--3039. PMLR, 2020.

\bibitem{Flam1986}
Sjur~Didrik Flam.
\newblock On penalty methods for minimax problems.
\newblock {\em Zeitschrift f{\"u}r Operations Research}, 30:A209–A222, 1986.

\bibitem{Goodfellow1412}
Ian~J. Goodfellow, Jean Pouget-Abadie, Mehdi Mirza, Bing Xu, David
  Warde-Farley, Sherjil Ozair, Aaron Courville, and Yoshua Bengio.
\newblock Generative adversarial nets.
\newblock In {\em Proceedings of the 27th International Conference on Neural
  Information Processing Systems - Volume 2}, NIPS’14, page 2672–2680,
  Cambridge, MA, USA, 2014. MIT Press.

\bibitem{grimmer2020landscape}
Benjamin Grimmer, Haihao Lu, Pratik Worah, and Vahab Mirrokni.
\newblock The landscape of nonconvex-nonconcave minimax optimization.
\newblock {\em arXiv preprint arXiv:2006.08667}, 2020.

\bibitem{grimmer2020limiting}
Benjamin Grimmer, Haihao Lu, Pratik Worah, and Vahab Mirrokni.
\newblock Limiting behaviors of nonconvex-nonconcave minimax optimization via
  continuous-time systems.
\newblock {\em arXiv preprint arXiv:2010.10628}, 2020.

\bibitem{Guillerme1989}
Jean Guillerme.
\newblock Convergence of approximate saddle points.
\newblock {\em Journal of Mathematical Analysis and Applications},
  137(2):297--311, 1989.

\bibitem{Hsieh2006}
Ya-Ping Hsieh, Panayotis Mertikopoulos, and Volkan Cevher.
\newblock The limits of min-max optimization algorithms: convergence to
  spurious non-critical sets.
\newblock {\em arXiv preprint arXiv:2006.09065}, 2020.

\bibitem{Jordan1902}
Chi Jin, Praneeth Netrapalli, and Michael~I. Jordan.
\newblock Minmax optimization: Stable limit points of gradient descent ascent
  are locally optimal.
\newblock {\em arXiv preprint arXiv:1902.00618}, 2019.

\bibitem{lee2016gradient}
Jason~D Lee, Max Simchowitz, Michael~I Jordan, and Benjamin Recht.
\newblock Gradient descent only converges to minimizers.
\newblock In {\em Conference on learning theory}, pages 1246--1257, 2016.

\bibitem{letcher2020impossibility}
Alistair Letcher.
\newblock On the impossibility of global convergence in multi-loss
  optimization.
\newblock {\em arXiv preprint arXiv:2005.12649}, 2020.

\bibitem{Lin1809}
Qihang Lin, Mingrui Liu, Hassan Rafique, and Tianbao Yang.
\newblock Solving weakly-convex-weakly-concave saddle-point problems as
  successive strongly monotone variational inequalities.
\newblock {\em arXiv preprint arXiv:1810.10207}, 2018.

\bibitem{Jordan1906}
Tianyi Lin, Chi Jin, and Michael~I. Jordan.
\newblock On gradient descent ascent for nonconvex-concave minimax problems.
\newblock {\em arXiv preprint arXiv:1906.00331}, 2019.

\bibitem{Jordan2002}
Tianyi Lin, Chi Jin, and Michael~I. Jordan.
\newblock Near-optimal algorithms for minimax optimization.
\newblock {\em arXiv preprint arXiv:2002.02417}, 2020.

\bibitem{lu2020s}
Haihao Lu.
\newblock An $ o (s^r) $-resolution ode framework for discrete-time
  optimization algorithms and applications to convex-concave saddle-point
  problems.
\newblock {\em arXiv preprint arXiv:2001.08826}, 2020.

\bibitem{madry2017towards}
Aleksander Madry, Aleksandar Makelov, Ludwig Schmidt, Dimitris Tsipras, and
  Adrian Vladu.
\newblock Towards deep learning models resistant to adversarial attacks.
\newblock {\em arXiv preprint arXiv:1706.06083}, 2017.

\bibitem{mokhtari2019unified}
Aryan Mokhtari, Asuman Ozdaglar, and Sarath Pattathil.
\newblock A unified analysis of extra-gradient and optimistic gradient methods
  for saddle point problems: Proximal point approach.
\newblock {\em arXiv preprint arXiv:1901.08511}, 2019.

\bibitem{Mouallif1989}
K.~Mouallif.
\newblock Variational convergence and perturbed proximal method for saddle
  point problems.
\newblock In {\em Optimization}, pages 115--140, Berlin, Heidelberg, 1989.
  Springer Berlin Heidelberg.

\bibitem{nemirovski2004prox}
Arkadi Nemirovski.
\newblock Prox-method with rate of convergence o (1/t) for variational
  inequalities with lipschitz continuous monotone operators and smooth
  convex-concave saddle point problems.
\newblock {\em SIAM Journal on Optimization}, 15(1):229--251, 2004.

\bibitem{nesterov2005smooth}
Yu~Nesterov.
\newblock Smooth minimization of non-smooth functions.
\newblock {\em Mathematical programming}, 103(1):127--152, 2005.

\bibitem{Nouiehed1902}
Maher Nouiehed, Maziar Sanjabi, Tianjian Huang, Jason~D Lee, and Meisam
  Razaviyayn.
\newblock Solving a class of non-convex min-max games using iterative first
  order methods.
\newblock In {\em Advances in Neural Information Processing Systems 32}, pages
  14934--14942. Curran Associates, Inc., 2019.

\bibitem{Rafique1810}
Hassan Rafique, Mingrui Liu, Qihang Lin, and Tianbao Yang.
\newblock Non-convex min-max optimization: Provable algorithms and applications
  in machine learning.
\newblock {\em arXiv preprint arXiv:1810.02060}, 2018.

\bibitem{rockafellar1976monotone}
R.~Tyrrell Rockafellar.
\newblock Monotone operators and the proximal point algorithm.
\newblock {\em SIAM journal on control and optimization}, 14(5):877--898, 1976.

\bibitem{Rockafellar1985}
R.~Tyrrell Rockafellar.
\newblock Maximal monotone relations and the second derivatives of nonsmooth
  functions.
\newblock {\em Annales de l'I.H.P. Analyse non lin\'eaire}, 2(3):167--184,
  1985.

\bibitem{Rockafellar1990}
R.~Tyrrell Rockafellar.
\newblock Generalized second derivatives of convex functions and saddle
  functions.
\newblock {\em Transactions of the American Mathematical Society},
  322(1):51--77, 1990.

\bibitem{sutton2018reinforcement}
Richard~S Sutton and Andrew~G Barto.
\newblock {\em Reinforcement learning: An introduction}.
\newblock MIT press, 2018.

\bibitem{Thekumparampil1912}
Kiran~K Thekumparampil, Prateek Jain, Praneeth Netrapalli, and Sewoong Oh.
\newblock Efficient algorithms for smooth minimax optimization.
\newblock In {\em Advances in Neural Information Processing Systems 32}, pages
  12680--12691. Curran Associates, Inc., 2019.

\bibitem{tseng1995linear}
Paul Tseng.
\newblock On linear convergence of iterative methods for the variational
  inequality problem.
\newblock {\em Journal of Computational and Applied Mathematics},
  60(1-2):237--252, 1995.

\bibitem{Yang2002}
Junchi Yang, Negar Kiyavash, and Niao He.
\newblock Global convergence and variance-reduced optimization for a class of
  nonconvex-nonconcave minimax problems.
\newblock {\em arXiv preprint arXiv:2002.09621}, 2020.

\bibitem{zhang2020optimality}
Guojun Zhang, Pascal Poupart, and Yaoliang Yu.
\newblock Optimality and stability in non-convex-non-concave min-max
  optimization.
\newblock {\em arXiv preprint arXiv:2002.11875}, 2020.

\bibitem{Zhang1806}
Siqi Zhang and Niao He.
\newblock On the convergence rate of stochastic mirror descent for nonsmooth
  nonconvex optimization.
\newblock {\em arXiv preprint arXiv:1806.04781}, 2018.

\bibitem{Zhou2017}
Zhengyuan Zhou, Panayotis Mertikopoulos, Nicholas Bambos, Stephen Boyd, and
  Peter~W Glynn.
\newblock Stochastic mirror descent in variationally coherent optimization
  problems.
\newblock In {\em Advances in Neural Information Processing Systems},
  volume~30. Curran Associates, Inc., 2017.

\end{thebibliography}

	\appendix
	
\section{Sample Paths From Other First-Order Methods} \label{app:sample-paths}
\begin{figure}
	\begin{subfigure}[b]{0.245\textwidth}
		\includegraphics[width=\textwidth]{ppmA1.png}
		\caption{PPM w/~A=1}
	\end{subfigure}
	\begin{subfigure}[b]{0.245\textwidth}
		\includegraphics[width=\textwidth]{ppmA10.png}
		\caption{PPM w/~A=10}
	\end{subfigure}
	\begin{subfigure}[b]{0.245\textwidth}
		\includegraphics[width=\textwidth]{ppmA100.png}
		\caption{PPM w/~A=100}
	\end{subfigure}
	\begin{subfigure}[b]{0.245\textwidth}
		\includegraphics[width=\textwidth]{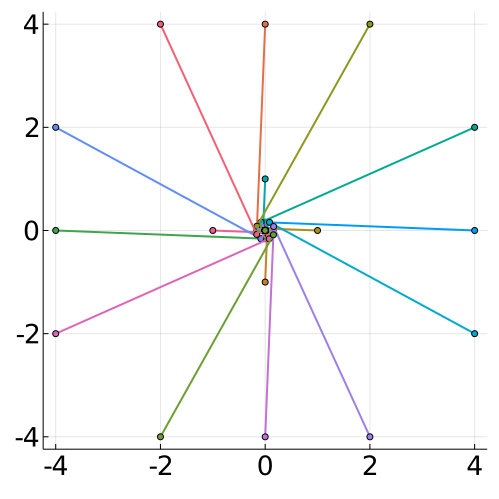}
		\caption{PPM w/~A=1000}
	\end{subfigure}
	\begin{subfigure}[b]{0.245\textwidth}
		\includegraphics[width=\textwidth]{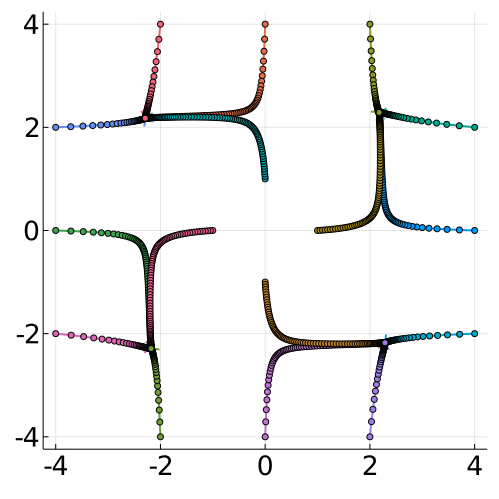}
		\caption{EGM w/~A=1}
	\end{subfigure}
	\begin{subfigure}[b]{0.245\textwidth}
		\includegraphics[width=\textwidth]{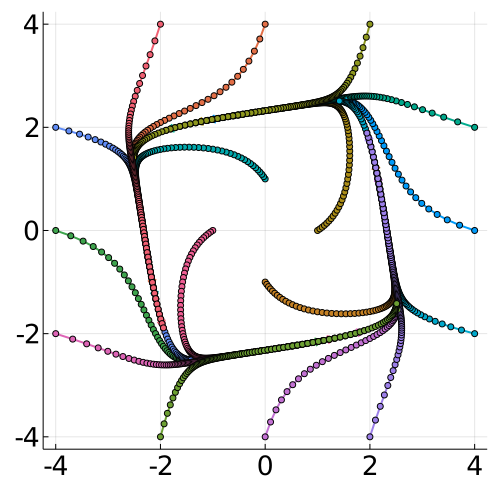}
		\caption{EGM w/~A=10}
	\end{subfigure}
	\begin{subfigure}[b]{0.245\textwidth}
		\includegraphics[width=\textwidth]{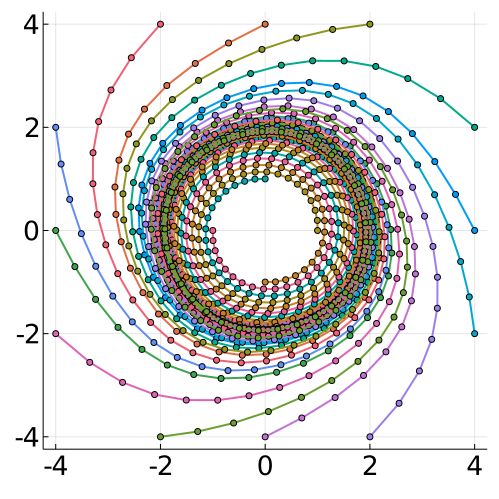}
		\caption{EGM w/~A=100}
	\end{subfigure}
	\begin{subfigure}[b]{0.245\textwidth}
		\includegraphics[width=\textwidth]{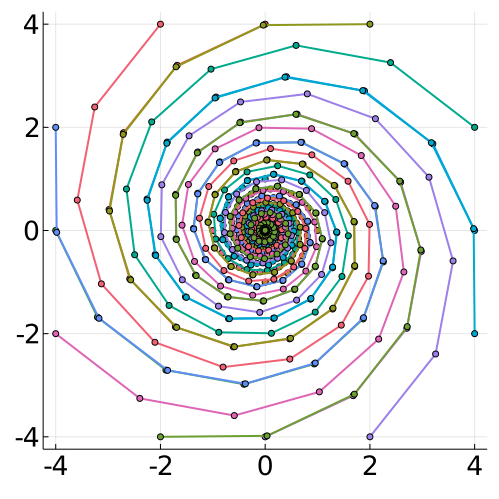}
		\caption{EGM w/~A=1000}
	\end{subfigure}
	\begin{subfigure}[b]{0.245\textwidth}
		\includegraphics[width=\textwidth]{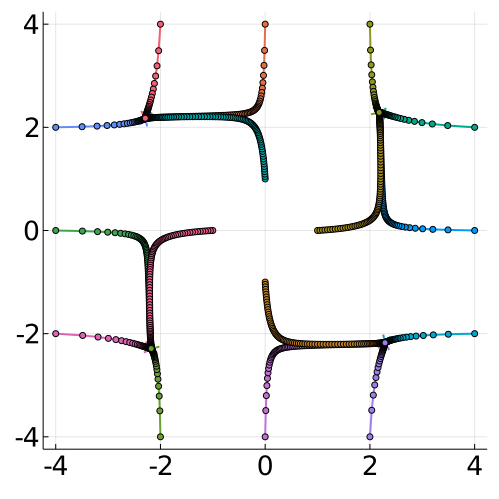}
		\caption{GDA w/~A=1}
	\end{subfigure}
	\begin{subfigure}[b]{0.245\textwidth}
		\includegraphics[width=\textwidth]{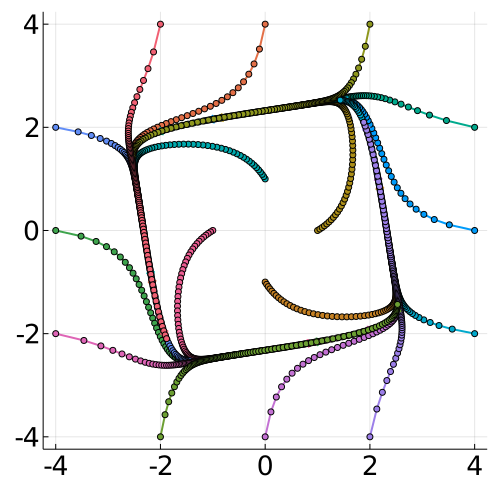}
		\caption{GDA w/~A=10}
	\end{subfigure}
	\begin{subfigure}[b]{0.245\textwidth}
		\includegraphics[width=\textwidth]{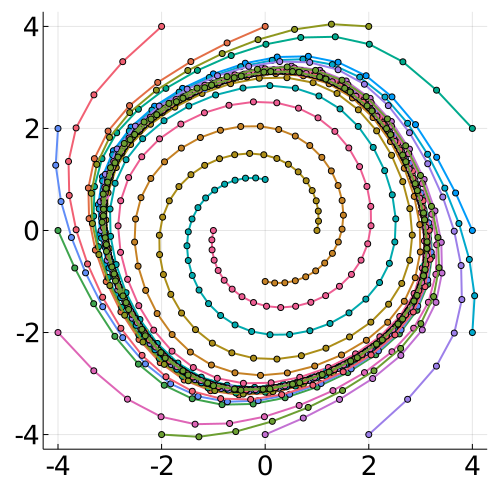}
		\caption{GDA w/~A=100}
	\end{subfigure}
	\begin{subfigure}[b]{0.245\textwidth}
		\includegraphics[width=\textwidth]{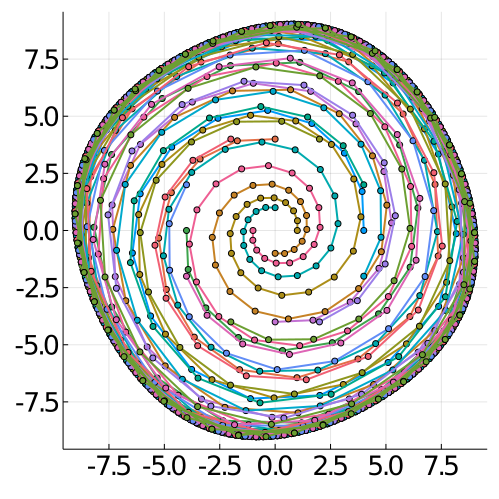}
		\caption{GDA w/~A=1000}
	\end{subfigure}
	\begin{subfigure}[b]{0.245\textwidth}
		\includegraphics[width=\textwidth]{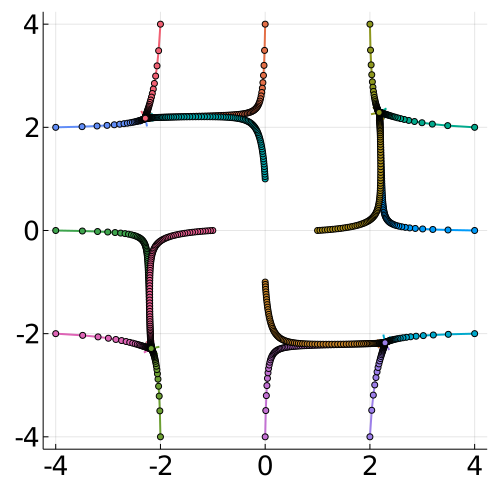}
		\caption{AGDA w/~A=1}
	\end{subfigure}
	\begin{subfigure}[b]{0.245\textwidth}
		\includegraphics[width=\textwidth]{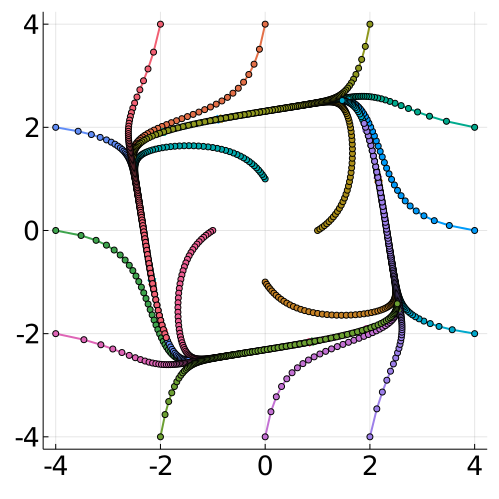}
		\caption{AGDA w/~A=10}
	\end{subfigure}
	\begin{subfigure}[b]{0.245\textwidth}
		\includegraphics[width=\textwidth]{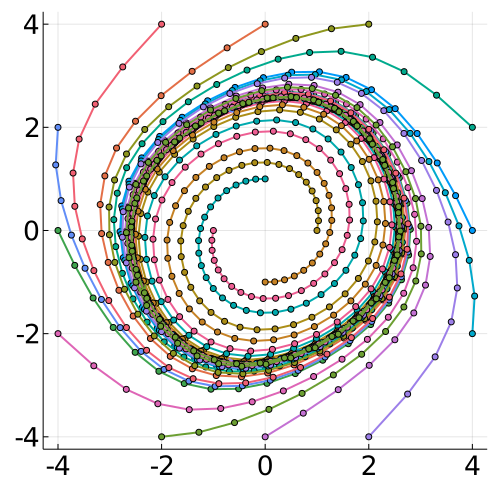}
		\caption{AGDA w/~A=100}
	\end{subfigure}
	\begin{subfigure}[b]{0.245\textwidth}
		\includegraphics[width=\textwidth]{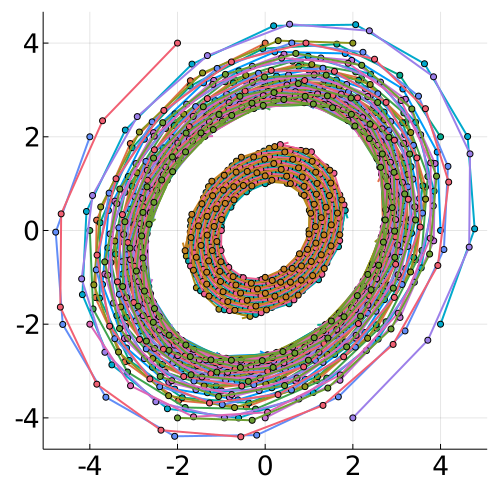}
		\caption{AGDA w/~A=1000}
	\end{subfigure}
	\caption{Sample paths of PPM, EGM, GDA, and AGDA extending Figure~\ref{fig:sample-path}.}
	\label{fig:path-more}
\end{figure}

Figure~\ref{fig:path-more} plots more solution paths of four common first-order methods for minimax problem for solving the two-dimensional minimax problem previously considered in Figure~\ref{fig:sample-path}. This problem is globally $\rho=20$-weakly convex and $\beta=172$-smooth on the box $[-4,4]\times [-4,4]$.

Plots (a)-(d) show the behavior of the Proximal Point Method (PPM)~\eqref{eq:saddle-PPM} with $\eta=2\rho = 40$ and $\lambda=1$. These figures match the landscape described by our theory: $A=1$ is small enough to have local convergence to four different stationary points (each around $\{\pm 2\}\times \{\pm 2\}$), $A=10$ has moderate size and every sample path is attracted into a limit cycle, and finally $A=100$ and $A=1000$ have a globally attractive stationary point.
Plots (e)-(h) show the behavior of the Extragradient Method (EG), which is defined by
\begin{align}\label{eq:EG}
\begin{bmatrix} \tilde x \\ \tilde y \end{bmatrix} &= \begin{bmatrix} x_{k} \\ y_{k} \end{bmatrix} + s\begin{bmatrix} -\nabla_x L(x_k,y_k) \\ \nabla_y L(x_k,y_k) \end{bmatrix}\nonumber\\
\begin{bmatrix} x_{k+1} \\ y_{k+1} \end{bmatrix} &= \begin{bmatrix} x_{k} \\ y_{k} \end{bmatrix} + s\begin{bmatrix} -\nabla_x L(\tilde x,\tilde y) \\ \nabla_y L(\tilde x,\tilde y) \end{bmatrix}
\end{align}
with stepsize chosen as $s = 1/2(\beta+A)$. These figures show that the extragradient method follows the same general trajectory as described by our theory for the proximal point method. For small $A=1$, local convergence occurs. For moderate sized $A=10$ and $A=100$, the algorithm falls into an attractive limit cycle, never converging. For large enough $A=1000$, the method globally converges to a stationary point.
Plots (i)-(l) show the behavior of Gradient Descent Ascent (GDA)~\eqref{eq:GDA} with $s = 1/2(\beta+A)$. This method is known to be unstable and diverge even for convex-concave problems. The same behavior carries over to our nonconvex-nonconcave example, falling into a limit cycle with an increasingly large radius as $A$ grows.
Plots (m)-(p) show the behavior of Alternating Gradient Descent Ascent (AGDA)
\begin{align}\label{eq:AGDA}
x_{k+1} &= x_k -s\nabla_x L(x_k,y_k) \nonumber\\
y_{k+1} &= y_k +s\nabla_y L(x_{k+1},y_k)
\end{align}
with $s = 1/2(\beta+A)$. Again for small $A$, we still see local convergence, but for larger $A=10,100,1000$, AGDA always falls into a limit cycle.


\section{Convex-Concave Optimization Analysis Proofs} \label{app:proofs}

\subsection{Proof of Lemma~\ref{lem:helper0}}
Observe that the assumed strong convexity and strong concavity ensures
\begin{align*}
M(x',y') \leq &M(x,y') -\nabla_x M(x',y')^T(x-x')- \frac{\mu}{2}\|x-x'\|^2\\
&\leq M(x,y) + \nabla_y M(x,y)^T(y'-y) -\nabla_x M(x',y')^T(x-x') - \frac{\mu}{2}\|y-y'\|^2 - \frac{\mu}{2}\|x-x'\|^2
\end{align*}
and symmetrically,
\begin{align*}
M(x',y') &\geq M(x',y) - \nabla_y M(x',y')^T(y-y') + \frac{\mu}{2}\|y-y^*\|^2\\
&\geq M(x,y) + \nabla_x M(x,y)^T(x'-x) - \nabla_y M(x',y')^T(y-y') + \frac{\mu}{2}\|x-x'\|^2 + \frac{\mu}{2}\|y-y'\|^2.
\end{align*}
Combining the above two inequalities gives the first claimed inequality
\begin{equation*}
\mu\left\|\begin{bmatrix} x-x' \\ y-y' \end{bmatrix}\right\|^2 \leq \left(\begin{bmatrix} \nabla_x M(x,y)  \\-\nabla_y M(x,y) \end{bmatrix} -\begin{bmatrix} \nabla_x M(x',y')  \\-\nabla_y M(x',y') \end{bmatrix}\right)^T\begin{bmatrix} x-x' \\ y-y' \end{bmatrix}.
\end{equation*}
Furthermore, when $\nabla M(x',y')=0$, Cauchy–Schwarz gives the second inequality.

\subsection{Proof of Theorem~\ref{thm:standard-contraction}}
First we use Lemma~\ref{lem:helper0} to conclude that if the set $S$ is large enough, $M$ must have a stationary point in $S$. Define $B(z,r)=\{z'|\|z-z'\|\le r\}$ as the ball centered as $a$ with radius $r$. 

\begin{lemma} \label{lem:helper1}
	Suppose $M$ is $\mu$-strongly convex-strongly concave in a set $B(x,r)\times B(y,r)$ for some fixed $(x,y)$ and $r\geq 2\|\nabla M(x,y)\|/\mu$, then there exists a stationary point of $M$ in $B((x,y),r/2)$.
\end{lemma}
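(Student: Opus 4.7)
The plan is to produce a saddle point of $M$ on the compact convex set $K=B(x,r)\times B(y,r)$ via a standard minimax existence result, and then argue, using Lemma~\ref{lem:helper0} together with the radius assumption $r\geq 2\|\nabla M(x,y)\|/\mu$, that this saddle point lies strictly inside $K$, so that its constrained first-order optimality conditions collapse to $\nabla M(x^*,y^*)=0$.

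First I would invoke Sion's minimax theorem (or, using strong convex-concavity directly, minimize the strongly convex function $\phi(u)=\max_{v\in B(y,r)}M(u,v)$ on $B(x,r)$) to obtain a saddle point $(x^*,y^*)\in K$ of the restricted problem. Strong convex-concavity on $K$ and compactness of $K$ make the inner max attained and $\phi$ well-defined and strongly convex. I would then record the first-order optimality conditions of this constrained saddle: $\nabla_x M(x^*,y^*)^T(u-x^*)\geq 0$ for every $u\in B(x,r)$ and $\nabla_y M(x^*,y^*)^T(v-y^*)\leq 0$ for every $v\in B(y,r)$. Specializing to the admissible choices $u=x$ and $v=y$ yields the two sign conditions $-\nabla_x M(x^*,y^*)^T(x-x^*)\leq 0$ and $\nabla_y M(x^*,y^*)^T(y-y^*)\leq 0$.

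Next I would apply Lemma~\ref{lem:helper0} to the pair $(x,y),(x^*,y^*)\in K$. The two sign conditions cancel the $\nabla M(x^*,y^*)$ terms that appear on the right-hand side, leaving
$$\mu\left\|\begin{bmatrix}x-x^*\\ y-y^*\end{bmatrix}\right\|^2 \leq \nabla_x M(x,y)^T(x-x^*) - \nabla_y M(x,y)^T(y-y^*) \leq \|\nabla M(x,y)\|\left\|\begin{bmatrix}x-x^*\\ y-y^*\end{bmatrix}\right\|$$
by Cauchy--Schwarz. Dividing gives $\|(x-x^*,y-y^*)\|\leq \|\nabla M(x,y)\|/\mu\leq r/2$, which simultaneously places $(x^*,y^*)$ in the target ball $B((x,y),r/2)$ and shows $\|x^*-x\|\leq r/2<r$, $\|y^*-y\|\leq r/2<r$, so $(x^*,y^*)$ is strictly interior to the product set $K$. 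The constrained first-order conditions then hold with equality for every feasible direction, forcing $\nabla M(x^*,y^*)=0$.

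The main obstacle is cleanly interfacing the two balls: the saddle point is produced on the product box $B(x,r)\times B(y,r)$, while the target $B((x,y),r/2)$ is an $\ell_2$-ball in $\RR^{n+m}$; the Lemma~\ref{lem:helper0} estimate is precisely the bridge that turns the $\ell_2$-distance bound into the strict coordinatewise inequalities needed for interiority. The degenerate case $\|\nabla M(x,y)\|=0$ (which permits $r=0$ in the hypothesis) is trivial, as $(x,y)$ is then itself a stationary point lying in $B((x,y),0)$.
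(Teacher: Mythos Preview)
Your proposal is correct and follows essentially the same approach as the paper: construct a constrained saddle point on $B(x,r)\times B(y,r)$, use its first-order optimality to obtain the sign inequality on the $\nabla M(x^*,y^*)$ terms, combine with Lemma~\ref{lem:helper0} and Cauchy--Schwarz to bound $\|(x-x^*,y-y^*)\|\leq r/2$, and conclude interiority hence stationarity. The only cosmetic difference is that the paper writes the optimality conditions in Lagrange-multiplier form $\nabla_x M(x^*,y^*)=-\lambda(x^*-x)$ for the ball constraints, whereas you use the equivalent variational-inequality form; both yield the same key inequality.
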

\begin{proof}
	Consider the following constrained minimax problem $ \min_{x'\in B(x,r)} \max_{y'\in B(y,r)} M(x,y).$
	Since $M(x,y)$ is strongly convex-strongly concave, it must have a unique solution $(x^*,y^*)$.
	The first-order optimality condition for $(x^*,y^*)$ ensures
	$$ \nabla_x M(x^*,y^*) =-\lambda(x^*-x) \ \ \ \text{ and } \ \ \  -\nabla_y M(x^*,y^*) =-\gamma(y^*-y)$$
	for some constants $\lambda,\gamma\geq 0$ that are nonzero only if $x^*$ or $y^*$ are on the boundary of $B(x,r)$ and $B(y,r)$ respectively. Taking an inner product with $(x^*-x,y^*-y)$ gives
	\begin{equation}\label{eq:lemA2.1}
	\begin{bmatrix} \nabla_x M(x^*,y^*)  \\-\nabla_y M(x^*,y^*) \end{bmatrix}^T\begin{bmatrix} x^*-x  \\ y^*-y \end{bmatrix} =-\left\|\begin{bmatrix}\sqrt{\lambda}(x^*-x)  \\ \sqrt{\gamma}(y^*-y) \end{bmatrix}\right\|^2\leq0.
	\end{equation}
	Applying Lemma~\ref{lem:helper0} and utilizing \eqref{eq:lemA2.1}, we conclude that
	\begin{equation}\label{eq:lemA2.2}
	\mu\left\|\begin{bmatrix} x^*-x  \\ y^*-y \end{bmatrix}\right\|^2 + \begin{bmatrix} \nabla_x M(x,y)  \\-\nabla_y M(x,y) \end{bmatrix}^T\begin{bmatrix} x^*-x  \\ y^*-y \end{bmatrix}\leq 0. 
	\end{equation}
	Hence
	$$ \left\|\begin{bmatrix} x^*-x  \\ y^*-y \end{bmatrix}\right\|^2 \leq \frac{1}{\mu}\left\|\begin{bmatrix} \nabla_x M(x,y)  \\-\nabla_y M(x,y) \end{bmatrix}\right\|\left\|\begin{bmatrix} x^*-x  \\ y^*-y \end{bmatrix}\right\|\ ,$$
	whereby
	$$ \left\|\begin{bmatrix} x^*-x  \\ y^*-y \end{bmatrix}\right\| \leq \frac{1}{\mu}\left\|\begin{bmatrix} \nabla_x M(x,y)  \\-\nabla_y M(x,y) \end{bmatrix}\right\| \leq r/2\ ,$$
	utilizing our assumed condition on $r$.
	Since $(x^*,y^*)$ lies strictly inside $B((x,y),r)$, the first-order optimality condition implies $(x^*,y^*)$ is a stationary point of $M$.
\end{proof}

Lemma~\ref{lem:helper1} ensures the existence of a nearby stationary point $(x^*,y^*)$. Then the standard proof for strongly monotone (from Lemma~\ref{lem:helper0}) and Lipschitz operators gives a contraction whenever $s\in(0, 2\mu/\beta^2) $:
\begin{align*}
\left\|\begin{bmatrix} x_{k+1}-x^* \\ y_{k+1}-y^* \end{bmatrix}\right\|^2
&= \left\|\begin{bmatrix} x_{k}-x^* \\ y_{k}-y^* \end{bmatrix}\right\|^2 - 2s\begin{bmatrix} \nabla_x M(x_k,y_k) \\ -\nabla_y M(x_k,y_k) \end{bmatrix}^T\begin{bmatrix} x_k-x^* \\ y_k-y^* \end{bmatrix} + s^2\left\|\begin{bmatrix} \nabla_x M(x_k,y_k) \\ -\nabla_y M(x_k,y_k) \end{bmatrix}\right\|^2\\
&\leq \left\|\begin{bmatrix} x_{k}-x^* \\ y_{k}-y^* \end{bmatrix}\right\|^2 - 2\mu s\left\|\begin{bmatrix} x_{k}-x^* \\ y_{k}-y^* \end{bmatrix}\right\|^2 + \beta^2s^2\left\|\begin{bmatrix} x_{k}-x^* \\ y_{k}-y^* \end{bmatrix}\right\|^2\\
&= \left(1 - 2\mu s+\beta^2s^2\right)\left\|\begin{bmatrix} x_{k}-x^* \\ y_{k}-y^* \end{bmatrix}\right\|^2 \ , 
\end{align*}
where the inequality utilizes \eqref{eq:lemA2.2} at $(x,y)=(x_k,y_k)$ and the smoothness of $M(x,y)$.

\end{document}